\providecommand{\U}[1]{\protect\rule{.1in}{.1in}}
\newtheorem{theorem}{Theorem}
\newtheorem{corollary}[theorem]{Corollary}
\newtheorem{definition}[theorem]{Definition}
\newtheorem{example}[theorem]{Example}
\newtheorem{proposition}[theorem]{Proposition}
\newtheorem{remark}[theorem]{Remark}
\thanks{}
\email{lvitagliano@unisa.it}
\begin{document}
\title[Representations of SH Lie-Rinehart ALgebras]{{Representations of Homotopy Lie-Rinehart Algebras}}
\author{Luca Vitagliano}
\address{DipMat, Universit\`a degli Studi di Salerno, {\& Istituto Nazionale di Fisica
Nucleare, GC Salerno,} Via Ponte don Melillo, 84084 Fisciano (SA), Italy.}

\begin{abstract}
I propose a definition of left/right connection along a strong homotopy
Lie-Rinehart algebra. This allows me to generalize simultaneously
representations up to homotopy of Lie algebroids and actions of $L_{\infty} $
algebras on graded manifolds. I also discuss the Schouten-Nijenhuis calculus
associated to strong homotopy Lie-Rinehart connections.

\end{abstract}
\maketitle

\emph{Keywords}: Lie-Rinehart algebra, $L_{\infty}$ algebroid, linear
connection, $Q$-manifold, BV algebra.

\quad

\emph{MSC-class}: 16W25, 53B05, 58A50, 14F10.

\tableofcontents

\section*{Introduction}

Let $\mathfrak{g}$ be a vector space over a field $K$. Lie brackets in
$\mathfrak{g}$ correspond bijectively to DG coalgebra structures on the
exterior coalgebra $\Lambda_{K}^{c}\mathfrak{g}$ (and to Gerstenhaber algebra
structures on the exterior algebra $\Lambda_{K}\mathfrak{g}$). Moreover, the
homology of $\Lambda_{K}^{c}\mathfrak{g}$ is that of $\mathfrak{g}$. Now, let
$L$ be a module over an associative, commutative, unital algebra $A$. In
\cite{h98}, Huebschmann remarks that there is no way to endow $\Lambda_{A}%
^{c}L$ with a DG coalgebra structure corresponding to a given Lie-Rinehart
(LR) structure on $(A,L)$ (see Section \ref{SecLRA} for a remainder on the
notion of LR algebra). Instead, LR algebra structures on $(A,L)$ correspond
bijectively to Gerstenhaber algebra stuctures on the exterior algebra
$\Lambda_{A}L$. What is then the relation between Gerstenhaber structures and
the cohomology of LR algebras? Huebschmann finds an answer in terms of
Batalin-Vilkovisky (BV) algebras (I refer to \cite{h98} for the notion of
Gerstenhaber, BV algebras and their relation with LR algebras). In particular,
he algebrizes a result of Koszul \cite{k85} (see also \cite{x99}) and shows
that, given an LR-algebra $(A,L)$,

\begin{enumerate}
\item BV algebra structures on $\Lambda_{A}L$ correspond bijectively to right
$(A,L)$-module structures in $A$, and

\item if $L$ is projective as an $A$-module, then a BV algebra structure in
$\Lambda_{A}L$ computes the homologies of $(A,L)$ with coefficients in the
right module $A$.
\end{enumerate}

{In \cite{kk12} Kowalzig and Kr\"{a}hmer describe a rather general cohomology
theory which is able to account for the algebraic structure of many different
cohomology theories (Hochschild, LR, group cohomology, etc.).
Kowalzig and Kr\"{a}hmer's theory encompasses Huebschmann's results.} On
another hand, Huebschmann himself explored in \cite{h05} higher homotopy
generalizations of LR, Gerstenhaber, and BV algebras with the aim of
\guillemotleft unify[ing] these structures by means of the relationship
between Lie-Rinehart, Gerstenhaber, and Batalin-Vilkovisky
algebras\guillemotright \ first observed in \cite{h98}, and the hope that this
will have been \guillemotleft a first step towards taming the bracket zoo that
arose recently in topological field theory\guillemotright . The higher
homotopies which are exploited in \cite{h05} \guillemotleft are of a special
kind, though, where only the first of an (in general) infinite family is non-zero\guillemotright .

In the literature, there already exist higher homotopy generalizations of LR
\cite{k01}, Gerstenhaber \cite{b11}, and BV \cite{k00} algebras (see also
\cite{v99}, and \cite{gtv12} for an operadic approach to homotopy Gerstenhaber
and homotopy BV algebras respectively). One of the aims of this paper is to
generalize Huebschmann's results \cite{h98,h05} to a setting where all higher
homotopies (in the infinite family) are possibly non-zero. To achieve this
goal, I generalize first the notions of (left/right) LR connection, and
(left/right) LR module \cite{h98}. As a byproduct, I obtain a rather wide
generalization of various constructions scattered in the literature. Namely,
the $LR_{\infty}$ modules defined in this paper generalize

\begin{enumerate}
\item \emph{representations up to homotopy} of Lie algebroids \cite{ac11}.
Namely, $LR_{\infty}$ modules may be understood as representations of
$LR_{\infty}$ algebras, which are homotopy versions of LR algebras, which, in
their turn, are purely algebraic generalizations of Lie algebroids.

\item \emph{actions of }$L_{\infty}$\emph{ algebras on graded manifolds
}\cite{mz12}. Namely, $LR_{\infty}$ algebras generalize $L_{\infty}$ algebras
and actions of the latter on graded manifolds are special instances of actions
of $LR_{\infty}$ algebras on graded algebra extensions (see Section
\ref{SecAct}).

\item \emph{actions of Lie algebroids on fibered manifolds} and derivative
representations of Lie algebroids\emph{ }\cite{k-sm02}. Namely, similarly as
above, actions of $LR_{\infty}$ algebras on graded algebra extensions are
purely algebraic and homotopy versions of actions of Lie algebroids on fibered manifolds.
\end{enumerate}

Finally, I obtain a generalization, to the homotopy setting, of the standard
Schouten-Nijenhuis calculus on multivectors and, more generally, exterior
algebras of Lie algebroids.

On another hand (left/right) modules over LR algebras are key concepts in the
theory of $\mathcal{D}$-modules \cite{p83,mp89}. Recall that a $\mathcal{D}%
$-module is a (left/right) module over the algebra $\mathcal{D}$ of linear
differential operators on a manifold. Since $\mathcal{D}$ is the universal
enveloping algebra of the LR algebra of vector fields, a left $\mathcal{D}%
$-module is actually the same as a module with a flat connection, i.e., a
module with a left representation of the LR algebra of vector fields. This
explains the relationship with LR algebras. $\mathcal{D}$-modules provide a
natural language for a geometric theory of linear partial differential
equations (PDE), and define a rich homological algebra \cite{s94}. The datum
of a linear PDE can be encoded by a $\mathcal{D}$-module, whose homological
algebra contains relevant information about the PDE (symmetries, conservation
laws, etc.). More generally, the datum of a non-linear PDE can be encoded by a
diffiety (or a $\mathcal{D}$-scheme) i.e., a countable-dimensional manifold
with a finite-dimensional, involutive distribution. Vector fields in the
distribution form again an LR algebra, and, similarly as before, modules over
this LR algebra contain relevant information about the non-linear PDE. The
idea of building a theory of $\mathcal{D}$-modules (and $\mathcal{D}$-schemes)
up to homotopy is intriguing. This paper may represent a first (short) step in
this direction.

The paper is divided into six sections and three appendixes. In Section
\ref{SecLR} I recall the definitions of LR algebras, (flat) connections along
them and the associated (co)homologies. In Section \ref{SecDer} I set the
notations about (multi)derivations of graded algebras and modules, and discuss
Lie algebras of multiderivations. In Section \ref{SecSHLR} I define strong
homotopy (SH) LR algebras in terms of multiderivations. It is a simple
translation exercise to check that Definition \ref{DefSHLR} is actually
equivalent to Kjeseth's definition of \emph{homotopy Lie-Rinehart pairs}
\cite{k01}. A SH LR algebra is a homotopy versions of a LR algebra. Notice,
however, that it is not the most general possible one. Roughly speaking, while
in a SH LR algebra the Lie bracket, and the Lie module structures (of a LR
algebra) are \textquotedblleft\emph{homotopyfied}\textquotedblright, the
commutative algebra structure is not. Nonetheless, SH LR algebras are already
enough for many applications. Namely, they appear in the BRST-BV formalism
\cite{k01b}, foliation theory \cite{h05,v12,v12b}, complex geometry
\cite{y11}. Moreover, as shown in Section \ref{SecSHLR} (see also \cite{b11}),
one can associate a SH LR algebra to a homotopy Poisson algebra (in the sense
of Cattaneo-Felder \cite{cf07}). In their turn homotopy Poisson algebras
appear in Poisson geometry \cite{cf07} (see also \cite{op05}). 
All these geometric examples provide
motivations for the abstract content of this manuscript, and the application
of the general theory to them will be discussed elsewhere. In Section
\ref{SecLeftConn} I define left connections along SH LR algebras. To my
knowledge, this definition appears here for the first time. I also discuss
cohomologies associated to flat connections and generalize to the present
homotopy setting actions of LR algebras and Schouten-Nijenhuis calculus. As
noted above, actions of SH LR algebras unify two constructions separately
present in literature: namely representations up to homotopy of Lie algebroids
\cite{ac11}, and actions of $L_{\infty}$ algebras on graded manifolds
\cite{mz12}. In Section \ref{SecRightConn} I define right connections along
SH\ LR algebras, cohomologies of flat right connections and the right homotopy
Schouten-Nijenhuis calculus. I also present the homotopy version of the
interplay between LR algebras and BV algebras \cite{h98}. A further motivation
for studying both left and right connections along SH LR algebras is that the
SH LR algebra of a homotopy Poisson algebra comes equipped with a canonical
flat right connection. Moreover, it can be showed that the $A_{\infty}$
algebra of a foliation $\mathcal{F}$ carries both a flat left and a flat right
connection along the SH LR algebra of $\mathcal{F}$ \cite{v12b}. In Section
\ref{SecDerRep} I generalize to the homotopy (and purely algebraic) setting
the concept of derivative representation of a Lie algebroid \cite{k-sm02}.
Finally, Appendixes \ref{ApForm} and \ref{ApOpRight} contain complementary
material, while Appendix \ref{Tables} contains three tables illustrating how
the constructions in this paper correspond to, generalize, and unify
constructions already present in literature.

\subsection*{Conventions and notations}

I will adopt the following notations and conventions throughout the paper. Let
$\ell,m$ be positive integers. I denote by $S_{\ell,m}$ the set of $(\ell
,m)$\emph{-unshuffles}, i.e., permutations $\sigma$ of $\{1,\ldots,\ell+m\}$
such that
\[
\sigma(1)<\cdots<\sigma(\ell),\quad\text{and\quad}\sigma(\ell+1)<\cdots
<\sigma(\ell+m).
\]

If $S$ is a set, I denote
\[
S^{\times k}:={}\underset{k\text{ times}}{\underbrace{S\times\cdots\times S}%
}.
\]

Every vector space will be over a field $K$ of zero characteristic. The degree
of a homogeneous element $v$ in a graded vector space will be denoted by
$\bar{v}$. However, when it appears in the exponent of a sign $(-)$, I will
always omit the overbar, and write, for instance, $(-)^{v}$ instead of
$(-)^{\bar{v}}$.

Let $V$ be a graded vector space,
\[
\boldsymbol{v}=(v_{1},\ldots,v_{n})\in V^{\times n},
\]
and $\sigma$ a permutation of $\{1,\ldots,n\}$. I denote by $\alpha
(\sigma,\boldsymbol{v})$ the sign implicitly defined by
\[
v_{\sigma(1)}\odot\cdots\odot v_{\sigma(n)}=\alpha(\sigma,\boldsymbol{v}%
)\,v_{1}\odot\cdots\odot v_{n}%
\]
where $\odot$ is the graded symmetric product in the graded symmetric algebra of $V$.

In this paper, I will deal with algebraic structures generalizing $L_{\infty}$
algebras and their modules (see, for instance, \cite{ls93,lm95}). $L_{\infty}$
algebras, also named \emph{strong homotopy (SH) Lie algebras}, are homotopy
versions of Lie algebras, i.e., Lie algebras \emph{up to homotopy}. More
precisely, an $L_{\infty}$ \emph{algebra} is a graded vector space $V$
equipped with a family of $k$-ary, graded skew-symmetric, multilinear, degree
$2-k$ operations
\[
\lambda_{k}:{}V^{\times k}\longrightarrow V,\quad k\in\mathbb{N},
\]
such that
\[
\sum_{i+j=k}(-)^{ij}\sum_{\sigma\in S_{i,j}}(-)^{\sigma}\alpha(\sigma
,\boldsymbol{v})\,\lambda_{j+1}(\lambda_{i}(v_{\sigma(1)},\ldots,v_{\sigma
(i)}),v_{\sigma(i+1)},\ldots,v_{\sigma(i+j)})=0,
\]
for all $\boldsymbol{v}=(v_{1},\ldots,v_{k})\in V^{\times k}$, $k\in
\mathbb{N}$. This is the classical notion of $L_{\infty}$ algebra \cite{ls93}.
However, I will refer instead to an equivalent notion where degrees are
shifted and the structure maps are graded symmetric, instead of graded
skew-symmetric. Following \cite{s09}, I call such an equivalent notion an
$L_{\infty}[1]$ algebra (see also \cite{v05}). Using $L_{\infty}[1]$ algebras
simplifies the signs in all formulas of this paper.

\begin{definition}
An $L_{\infty}[1]$ \emph{algebra} is a graded vector space $V$ equipped with a
family of $k$-ary, graded symmetric, multilinear, degree $1$ maps
\[
\lambda_{k}:{}V^{\times k}\longrightarrow V,\quad k\in\mathbb{N},
\]
such that
\[
\sum_{i+j=k}\sum_{\sigma\in S_{i,j}}\alpha(\sigma,\boldsymbol{v}%
)\,\lambda_{j+1}(\lambda_{i}(v_{\sigma(1)},\ldots,v_{\sigma(i)}),v_{\sigma
(i+1)},\ldots,v_{\sigma(i+j)})=0,
\]
for all $\boldsymbol{v}=(v_{1},\ldots,v_{k})\in V^{\times k}$, $k\in
\mathbb{N}$ (in particular, $(V,\lambda_{1})$ is a cochain complex).
\end{definition}

$L_{\infty}$ algebra structures on $V$ correspond bijectively to $L_{\infty
}[1]$ algebra structures on the suspension $V[1]:=\bigoplus_{i}V[1]^{i}$,
where $V[1]^{i}:=V^{i+1}$. The bijection is obtained by applying the
d\'{e}calage isomorphism (between exterior powers of $V$ and symmetric powers
of $V[1]$):
\[
\Lambda^{k}V\longrightarrow S^{k}V[1],\quad v_{1}\wedge\cdots\wedge
v_{k}\longmapsto(-)^{(k-1)\bar{v}_{1}+(k-2)\bar{v}_{2}+\cdots+\bar{v}_{k-1}%
}v_{1}\cdots v_{k},
\]
where $\bar{v}_{i}$ is the degree of $v_{i}$ in $V$.

Let $V$ be an $L_{\infty}[1]$ algebra. Definition below is the $L_{\infty}[1]$
version of a the definition of $L_{\infty}$-module \cite{lm95}.

\begin{definition}
An $L_{\infty}[1]$ \emph{module over }$V$ is a graded vector space $W$
equipped with a family of $k$-ary, multilinear, degree $1$ maps
\[
\mu_{k}:{}V^{\times(k-1)}\times W\longrightarrow W,\quad k\in\mathbb{N},
\]
which are graded symmetric in the first $k-1$ arguments, and such that
\begin{align}
&  \sum_{i+j=k}\sum_{\sigma\in S_{i,j}}\alpha(\sigma,\boldsymbol{v}%
)\,\mu_{j+1}(\lambda_{i}(v_{\sigma(1)},\ldots,v_{\sigma(i)}),v_{\sigma
(i+1)},\ldots,v_{\sigma(i+j-1)}|w)\nonumber\\
&  +\sum_{i+j=k}\sum_{\sigma\in S_{i,j}}(-)^{\chi}\,\alpha(\sigma
,\boldsymbol{v})\mu_{i+1}(v_{\sigma(1)},\ldots,v_{\sigma(i)}|\mu_{j}%
(v_{\sigma(i+1)},\ldots,v_{\sigma(i+j-1)}|w))=0 \label{Jac1}%
\end{align}
for all $\boldsymbol{v}=(v_{1},\ldots,v_{k-1})\in V^{\times(k-1)}$, $w\in W$,
$k\in\mathbb{N}$, where
$\chi=\bar{v}_{\sigma(1)}+\cdots+\bar{v}_{\sigma(i)}$ (in particular, $(W,\mu_{1})$ is a cochain complex).
\end{definition}

In a similar way one can write a definition of \emph{right} $L_{\infty}[1]$
\emph{module}, generalizing the standard notion of right Lie algebra module.
Since, apparently, such a definition does not appear in literature, I record
it here.

\begin{definition}
A \emph{right} $L_{\infty}[1]$ \emph{module over }$V$ is a graded vector space
$Z$ equipped with a family of $k$-ary, multilinear, degree $1$ maps
\[
\rho_{k}:{}V^{\times(k-1)}\times Z\longrightarrow Z,\quad k\in\mathbb{N},
\]
which are graded symmetric in the first $k-1$ arguments, and such that
\begin{align}
&  \sum_{i+j=k}\sum_{\sigma\in S_{i,j}}\alpha(\sigma,\boldsymbol{v}%
)\,\rho_{j+1}(\lambda_{i}(v_{\sigma(1)},\ldots,v_{\sigma(i)}),v_{\sigma
(i+1)},\ldots,v_{\sigma(i+j-1)}|z)\nonumber\\
&  -\sum_{i+j=k}\sum_{\sigma\in S_{i,j}}(-)^{\chi}\,\alpha(\sigma
,\boldsymbol{v})\rho_{i+1}(v_{\sigma(1)},\ldots,v_{\sigma(i)}|\rho
_{j}(v_{\sigma(i+1)},\ldots,v_{\sigma(i+j-1)}|z))=0 \label{Jac2}%
\end{align}
for all $\boldsymbol{v}=(v_{1},\ldots,v_{k-1})\in V^{\times(k-1)}$, $z\in Z$,
$k\in\mathbb{N}$, $\chi = \bar{v}_{\sigma (1)} + \cdots + \bar{v}_{\sigma (i)}$ (in particular, $(Z,\rho_{1})$ is a cochain complex).
\end{definition}

Notice the minus sign in front of the second summand of the left hand side of
(\ref{Jac2}), in contrast with Formula (\ref{Jac1}).

\section{Left and Right Representations of Lie-Rinehart Algebras\label{SecLR}}

\subsection{Lie-Rinehart Algebras\label{SecLRA}}

Lie-Rinehart algebras appear in various areas of Mathematics. In differential
geometry, they appear as spaces of sections of Lie algebroids. The prototype
of a Lie algebroid is the tangent bundle. Accordingly, vector fields on a
manifold form a Lie-Rinehart algebra. In its turn, the theory of Lie
algebroids proved to encode salient features of foliation theory, group action
theory, Poisson geometry, etc. In this section, I report those definitions
from the theory of Lie-Rinehart algebras that are relevant for the purposes of
the paper. For more details about Lie-Rinehart algebras, see \cite{h04} and
references therein.

A \emph{Lie-Rinehart (LR) algebra} is a pair $(A,L)$ where $A$ is an
associative, commutative, unital algebra over a field $K$ of zero
characteristic, and $L$ is a Lie algebra. Moreover, $L$ is an $A$-module and
$A$ is an $L$-module (with structure map $\alpha:L\longrightarrow
\mathrm{End}_{K}A$, called the \emph{anchor}). All these structures fulfills
the following compatibility conditions.\ For $a,b\in A$ and $\xi,\zeta\in L$
\begin{align*}
\alpha(\xi)(ab)  &  =\alpha(\xi)(a)b+a\alpha(\xi)(b)\\
(a\alpha(\xi))(b)  &  =a\alpha(\xi)(b)\\
\lbrack\xi,a\zeta]  &  =\alpha(\xi)(a)\zeta+a[\xi,\zeta].
\end{align*}
The first identity tells us that $L$ acts on $A$ by derivations. The second
identity tells us that the anchor $\alpha:L\longrightarrow\mathrm{Der}A$ is
$A$-linear. The third identity tells us that for all $\xi\in L$, the pair
$([\xi,{}\cdot{}],\alpha(\xi))$ is a \emph{derivation} of $L$. Recall that a
\emph{derivation} of an $A$-module $P$ is a pair $\mathbb{X}=(X,\sigma
_{\mathbb{X}})$ where $X:P\longrightarrow P$ is a $K$-linear operator and
$\sigma_{\mathbb{X}}$ is a derivation of $A$, called the \emph{symbol} of
$\mathbb{X}$, such that, for $a\in A$, and $p\in P$
\[
X(ap)=\sigma_{\mathbb{X}}(a)p+aX(p).
\]
Denote by $\mathrm{Der}P$ the set of derivations of $P$. Notice, for future
use, that there are two different $A$-module structures on $\mathrm{Der}P$.
The first one has structure map $(a,\mathbb{X})\longmapsto a^{L}%
\mathbb{X}:=(aX,a\sigma_{\mathbb{X}})$. The second one has structure map
$(a,\mathbb{X})\longmapsto a^{R}\mathbb{X}:=(X\circ a,a\sigma_{\mathbb{X}})$.
Here, $a$ is interpreted as the multiplication operator $A\longrightarrow A$,
$b\longmapsto ab$. Write $\mathrm{Der}^{L}P$ for $\mathrm{Der}P$ with the
first $A$-module structure, and $\mathrm{Der}^{R}P$ for $\mathrm{Der}P$ with
the second $A$-module structure.

The prototype of an LR algebra is the pair $(A,\mathrm{Der}A)$, with Lie
bracket the standard commutator of derivations, and anchor the identity. It is
easy to see that both $(A,\mathrm{Der}^{L}P)$ and $(A,\mathrm{Der}^{R}P)$ are
also LR algebras with Lie bracket the standard commutator, and anchor
$\mathbb{X}\longmapsto\sigma_{\mathbb{X}}$. As already outlined, in differential geometry LR
algebras appear as pairs $(A,L)$ where $A$ is the algebra of smooth real
functions on a smooth manifold $M$, and $L$ is the module of sections of a Lie
algebroid over $M$.

\subsection{Connections along Lie-Rinehart Algebras}

Connections along LR algebras are the algebraic counterparts of connections
along Lie algebroids \cite{elw99,x99}. Let $(A,L)$ be a LR algebra and $P,Q$
be $A$-modules. A \emph{left }$(A,L)$\emph{-connection} in $P$ (or, a
\emph{left connection along }$(A,L)$) is a map $\nabla:L\longrightarrow
\mathrm{End}_{K}P$, written $\xi\longmapsto\nabla_{\xi}$, such that, for $a\in
A$, $\xi\in L$, and $p\in P$,
\begin{align*}
\nabla_{\xi}(ap)  &  =\alpha(\xi)(a)p+a\nabla_{\xi}p,\\
\nabla_{a\xi}p  &  =a\nabla_{\xi}p.
\end{align*}
The first identity tells us that the pair $(\nabla_{\xi},\alpha(\xi))$ is a
derivation. The second identity tells us that the map $L\longrightarrow
\mathrm{Der}^{L}P$, $\xi\longmapsto(\nabla_{\xi},\alpha(\xi))$ is $A$-linear.
A left $(A,L)$-connection $\nabla$ is \emph{flat} if, for all $\zeta,\xi\in
L$,
\[
\lbrack\nabla_{\xi},\nabla_{\zeta}]-\nabla_{\lbrack\xi,\zeta]}=0,
\]
which tells us that that $\nabla$ is a homomorphism of Lie algebras. Left
connections along Lie-Rinehart algebras generalize the standard differential
geometry notion of linear connections in vector bundles.

Let $P$ be an $A$-module and $\nabla$ an $(A,L)$-connection in it. There is an
associated sequence
\begin{equation}
0\longrightarrow P\overset{D^{\nabla}}{\longrightarrow}\mathrm{Hom}%
_{A}(L,P)\overset{D^{\nabla}}{\longrightarrow}\cdots\longrightarrow
\mathrm{Alt}_{A}^{k}(L,P)\overset{D^{\nabla}}{\longrightarrow}\mathrm{Alt}%
_{A}^{k+1}(L,P)\overset{D^{\nabla}}{\longrightarrow}\cdots\label{27}%
\end{equation}
defined via the Chevalley-Eilenberg formula
\begin{align*}
(D^{\nabla}\Omega)(\xi_{1},\ldots,\xi_{k+1}):=  &  \sum_{i}(-)^{i+1}%
\nabla_{\xi_{i}}\Omega(\xi_{1},\ldots,\widehat{\xi_{i}},\ldots,\xi_{k+1})\\
&  +\sum_{i<j}(-)^{i+j}\Omega([\xi_{i},\xi_{j}],\xi_{1},\ldots,\widehat
{\xi_{i}},\ldots,\widehat{\xi_{j}},\ldots,\xi_{k+1}),
\end{align*}
where a hat $\ \widehat{\cdot}\ $ denotes omission, $\Omega\in\mathrm{Alt}_{A}%
^{k}(L,P)$ is an $A$-multilinear, alternating map $L^{\times k}\longrightarrow
P$, and $\xi_{1},\ldots,\xi_{k+1}\in L$. If $\nabla$ is flat, $D^{\nabla}$ is
a differential, i.e. $D^{\nabla}\circ D^{\nabla}=0$, and sequence (\ref{27})
is a (cochain) complex. Chevalley-Eilenberg cochain complexes of Lie algebras, de Rham
complexes of smooth manifolds, and, more generally, Chevalley-Eilenberg
complexes of Lie algebroids are of the kind (\ref{27}).

A right $(A,L)$\emph{-connection} in $Q$ (or, a \emph{right connection along
}$(A,L)$) is a map $\Delta:L\longrightarrow\mathrm{End}_{K}Q$, written
$\xi\longmapsto\Delta_{\xi}$, such that, for $a\in A$, $\xi\in L$, and $q\in
Q$,
\begin{align}
\Delta_{\xi}(aq)  &  =-\alpha(\xi)(a)q+a\Delta_{\xi}q.\label{16}\\
\Delta_{a\xi}q  &  =\Delta_{\xi}(aq). \label{17}%
\end{align}
Notice that the operators $\Delta_{\xi}$ are usually written as acting from
the right. I prefer to keep a different notation which is simpler to handle in
the graded case. Identity (\ref{16}) tells us that the pair $(\Delta_{\xi
},-\alpha(\xi))$ is a derivation. Identity (\ref{17}) tells us that the map
$L\longrightarrow\mathrm{Der}^{R}Q$, $\xi\longmapsto(\Delta_{\xi},-\alpha
(\xi))$ is $A$-linear. A right $(A,L)$-connection $\Delta$ is \emph{flat} if,
for all $\zeta,\xi\in L$,
\[
\lbrack\Delta_{\xi},\Delta_{\zeta}]+\Delta_{\lbrack\xi,\zeta]}=0,
\]
which tells us that $\Delta$ is an anti-homomorphism of Lie algebras.

Let $Q$ be an $A$-module and $\Delta$ an $(A,L)$-connection in it. There is an
associated sequence
\begin{equation}
0\longleftarrow Q\overset{D^{\Delta}}{\longleftarrow}L\otimes_{A}%
Q\longleftarrow\cdots\overset{D^{\Delta}}{\longleftarrow}\Lambda_{A}%
^{k}L\otimes_{A}Q\overset{D^{\Delta}}{\longleftarrow}\Lambda_{A}^{k+1}%
L\otimes_{A}Q\longleftarrow\cdots\label{28}%
\end{equation}
defined via the Rinehart formula \cite{r63}
\begin{align*}
D^{\Delta}(\xi_{1}\wedge\cdots\wedge\xi_{k+1}\otimes q):=  &  \sum
_{i<j}(-)^{i+j}[\xi_{i},\xi_{j}]\wedge\xi_{1}\wedge\cdots\wedge\widehat
{\xi_{i}}\wedge\cdots\wedge\widehat{\xi_{j}}\wedge\cdots\wedge\xi_{k+1}\otimes
q\\
&  +\sum_{i}(-)^{i+1}\xi_{1}\wedge\cdots\wedge\widehat{\xi_{i}}\wedge
\cdots\wedge\xi_{k+1}\otimes\Delta_{\xi_{i}}q,
\end{align*}
$\xi_{1},\ldots,\xi_{k+1}\in L$, $q\in Q$. If $\Delta$ is flat, $D^{\Delta}$
is a differential, i.e. $D^{\Delta}\circ D^{\Delta}=0$, and sequence
(\ref{28}) is a (chain) complex. The $\mathrm{Diff}$-Spencer complex (dual to
the first Spencer sequence) of a linear differential operator \cite{kv98} is
of the kind (\ref{28}). Another, closely related, motivation for considering
both left and right connections along LR algebras is the following. Let
$\mathcal{D}$ be the algebra of linear differential operators over $A$. In
general, $\mathcal{D}$ is a non-commutative algebra. There are obvious
inclusions $A$, $\mathrm{Der}A\subset\mathcal{D}$, and two different
$A$-module structures in $\mathcal{D}$ with structure maps $(a,\square
)\longmapsto a\circ\square$, and $(a,\square)\longmapsto\square\circ a$
respectively. Here, $a$ is interpreted as a differential operator of order
$0$: multiplication by $a$ operator. Write $\mathcal{D}^{L}$ for $\mathcal{D}$ with the first $A$-module
structure, and $\mathcal{D}^{R}$ for $\mathcal{D}$ with the second $A$-module
structure. Now let $\xi$ be a derivation of $A$. Define an operator
$\nabla_{\xi}:\mathcal{D}^{L}\longrightarrow\mathcal{D}^{L}$ (resp.,
$\Delta_{\xi}:\mathcal{D}^{R}\longrightarrow\mathcal{D}^{R}$), by putting
$\nabla_{\xi}\square:=\xi\circ\square$ (resp., $\Delta_{\xi}\square
:=\square\circ\xi$). Both $\nabla_{\xi}$ and $\Delta_{\xi}$ are derivations of
$A$-modules (beware, that neither $\nabla_{\xi}$, nor $\Delta_{\xi}$ is a
derivation of the algebra $\mathcal{D}$). Even more, $\nabla$ is a flat left
$(A,\mathrm{Der}A)$-connection in $\mathcal{D}^{L}$. Similarly, $\Delta$ is a
flat right $(A,\mathrm{Der}A)$-connection in $\mathcal{D}^{R}$. More
generally, there are canonical left and right connections in the universal
enveloping algebra of any LR algebra.

Notice that, under suitable regularity conditions on $L$, namely, $L$ being a
projective and finitely generated $A$-module of constant rank $n$, right
$(A,L)$-connections in an $A$-module $Q$ are actually equivalent to left
$(A,L)$-connections in $\Lambda_{A}^{n}L\otimes Q$. Moreover, the equivalence
identifies the \emph{Reinehart sequence }(\ref{28}) of $Q$ and the
\emph{Chevalley-Eilenberg sequence }(\ref{27}) of $\Lambda_{A}^{n}L\otimes
_{A}Q$ \cite{h99}. This is the case, for instance, when $L$ is the module of
sections of a (finite dimensional) Lie algebroid (over a connected manifold).
However, in the general case, right and left $(A,L)$-connections are distinct notions.

All the notions in this sections, in particular that of LR algebra, have a
graded analogue, which can be easily guessed exploiting the \emph{Koszul sign
rule}.

\section{Derivations and Multiderivations of Graded Modules\label{SecDer}}

\subsection{Derivations}

LR algebras have analogues up to homotopy, which are known as \emph{strong
homotopy} \emph{(SH) LR algebras} \cite{k01,h05,v12}. SH LR algebras were
introduced by Kjeseth in \cite{k01}, under the name \emph{homotopy
Lie-Rinehart pairs}, and appear naturally in different geometric contexts,
e.g., BRST-BV formalism \cite{k01b}, foliation theory \cite{h05,v12,v12b},
complex geometry \cite{y11}, action of $L_{\infty}$ algebras on graded
manifolds (see Section \ref{SecAct} of this paper). Kjeseth's definition of a
homotopy Lie-Rinehart pair makes use of the coalgebra concepts of
\emph{subordinate derivation sources}, and \emph{resting coderivations}. In a
similar spirit, Huebschmann proposed an equivalent definition making use of
\emph{coderivations}, and \emph{twisting cochains} \cite{h13}. In this paper,
I propose a third, equivalent (but, perhaps, somewhat more transparent)
definition in terms of multiderivations of graded modules. I summarize the
relevant facts about multiderivations in this Section. Propositions
\ref{Prop6}, \ref{Prop7}, \ref{Prop8}, \ref{Prop9} will play a key role in the sequel.

Let $A$ be an associative, graded commutative, unital algebra, and let $P,Q$
be $A$-modules. If there is risk of confusion, I will use the name
$A$\emph{-module derivation} for a \emph{derivation of an }$A$\emph{-module}
to make clear the distinction with derivations of algebras. {In Section
\ref{SecLRA}, I recalled the non-graded definition. The graded definition is
obtained, as usual, by applying the Koszul sign rule.} Namely, a graded\emph{
}$A$\emph{-module derivation} of $P$ (or, simply, a \emph{derivation}, if
there is no risk of confusion) is a pair $\mathbb{X}=(X,\sigma_{\mathbb{X}})$,
where $\sigma_{\mathbb{X}}$ is a graded derivation of $A$, called the
\emph{symbol of }$\mathbb{X}$, and $X$ is a $K$-linear, graded operator
$X:P\longrightarrow P$ such that
\[
X(ap)=(-)^{\mathbb{X}a}aX(p)+\sigma_{\mathbb{X}}(a)p,\quad a\in A,\ p\in P.
\]
Notice that, in general, $X$ does not determine $\sigma_{\mathbb{X}}$
uniquely. That is the reason why I added the datum of $\sigma_{\mathbb{X}}$ to
the definition of a derivation. However, if $P$ is a faithful module,
$\sigma_{\mathbb{X}}$ is determined by $X$ and one can identify $\mathbb{X}$
with its first component $X$. Accordingly, I will sometimes write
$\mathbb{X}(p)$ for $X(p)$ and use other similar slight abuses of notation
without further comment. Beware that \emph{an }$A$\emph{-module derivation
of }$A$\emph{ is not an ordinary derivation, in general}. Rather, it is a
first order differential operator.

\begin{example}
Let $\varphi:P\longrightarrow P$ be an $A$-linear map. Then $(\varphi,0)$ is a derivation.
\end{example}

I will denote by $\mathrm{Der}_{A}P$ (or simply $\mathrm{Der}P$ if there is no
risk of confusion) the set of left derivations of $P$. There are two different
$A$-module structures on $\mathrm{Der}P$. The first one has structure map
$(a,\mathbb{X})\longmapsto(aX,a\sigma_{\mathbb{X}})$. The second one has
structure map $(a,\mathbb{X})\longmapsto((-)^{a\mathbb{X}}X\circ
a,a\sigma_{\mathbb{X}})$. Here, $a$ is interpreted as the multiplication
operator $P\longrightarrow P$, $p\longmapsto ap$. Write $\mathrm{Der}_{A}%
^{L}P$ (or simply $\mathrm{Der}^{L}P$) for $\mathrm{Der}P$ with the first
$A$-module structure, and $\mathrm{Der}_{A}^{R}P$ (or simply $\mathrm{Der}%
^{R}P$) for $\mathrm{Der}P$ with the second $A$-module structure. Both
$(A,\mathrm{Der}^{L}P)$ and $(A,\mathrm{Der}^{R}P)$ are graded LR algebras
with Lie bracket given by%
\[
\lbrack\mathbb{X},\mathbb{X}^{\prime}]:=([X,X^{\prime}],[\sigma_{\mathbb{X}%
},\sigma_{\mathbb{X}^{\prime}}]),
\]
and anchor $\mathbb{X}\longmapsto\sigma_{\mathbb{X}}$.

Derivations of $A$-modules can be extended to tensor products and
homomorphisms as follows. Let $P$, $P^{\prime}$ be $A$-modules and let
$\mathbb{X}$, and $\mathbb{X}^{\prime}$ be derivations of $P$, and $P^{\prime
}$ respectively. Suppose that $\mathbb{X}$, and $\mathbb{X}^{\prime}$ share
the same symbol $\sigma=\sigma_{\mathbb{X}}=\sigma_{\mathbb{X}^{\prime}}$. It
is easy to see that the operator $X^{\otimes}:P\otimes_{K}P^{\prime
}\longrightarrow P\otimes_{K}P^{\prime}$ defined by
\[
X^{\otimes}(p\otimes p^{\prime}):=(Xp)\otimes p^{\prime}+(-)^{\sigma
p}p\otimes(X^{\prime}p^{\prime})
\]
descends to a well defined operator on $P\otimes_{A}P^{\prime}$, which,
abusing the notation, I denote again by $X^{\otimes}$. Moreover $\mathbb{X}%
^{\otimes}:=(X^{\otimes},\sigma)$ is a derivation. Similarly, the operator
$X^{\mathrm{Hom}}:\mathrm{Hom}_{K}(P,P^{\prime})\longrightarrow\mathrm{Hom}%
_{K}(P,P^{\prime})$ defined as
\[
(X^{\mathrm{Hom}}\varphi)(p):=X^{\prime}\varphi(p)-(-)^{\sigma\varphi}%
\varphi(Xp)
\]
descends to an operator $X^{\mathrm{Hom}}$ on $\mathrm{Hom}_{A}(P,P^{\prime})$
and $\mathbb{X}^{\mathrm{Hom}}:=(X^{\mathrm{Hom}},\sigma)$ is a derivation.

\subsection{Multiderivations}

Now, I generalize the notion of derivation to that of multiderivation. First I
discuss multiderivations of algebras.

\begin{definition}
A \emph{multiderivation of }$A$\emph{ with }$k$\emph{ entries} is a graded
symmetric, $K$-linear operator $H:A^{\times k}\longrightarrow A$, such that
\[
H(a_{1},\ldots,a_{k-1},ab)=(-)^{\chi}a\cdot H(a_{1},\ldots,a_{k-1}%
,b)+H(a_{1},\ldots,a_{k-1},a)\cdot b,
\]
{where $\chi=(\bar{H}+\bar{a}_{1}+\cdots+\bar{a}_{k-1})\bar{a}$,} for all
$a_{1},\ldots,a_{k-1},a,b\in A$.
\end{definition}

Denote by $\mathrm{Der}^{k}A$ the set of multiderivations of $A$ with $k$
entries. In particular, $\mathrm{Der}^{0}A=A$ and $\mathrm{Der}^{1}A$ consists
of standard graded derivations of $A$. Clearly, $\mathrm{Der}^{k}A$ is a
graded $A$-module. Put $\mathrm{Der}^{\bullet}A:=\bigoplus_{k}\mathrm{Der}%
^{k}A$, which is naturally bi-graded. However, the total degree will be of
primary importance for the purposes of this paper. The $A$-module
$\mathrm{Der}^{\bullet}A$ can be given the structure of a graded Lie algebra
(beware, not bi-graded) as follows. For $H\in\mathrm{Der}^{k}A$, and
$H^{\prime}\in\mathrm{Der}^{\ell}A$, let $[H,H^{\prime}]\in\mathrm{Der}%
^{k+\ell-1}A$ be defined by%
\[
\lbrack H,H^{\prime}]:=H\circ H^{\prime}-(-)^{HH^{\prime}}H^{\prime}\circ H,
\]
where $H\circ H^{\prime}$ is given by
\begin{equation}
(H\circ H^{\prime})(a_{1},\ldots,a_{k+\ell-1}):=\sum_{\sigma\in S_{\ell,k-1}%
}\alpha(\sigma,\boldsymbol{a})H(H^{\prime}(a_{\sigma(1)},\ldots,a_{\sigma
(\ell)}),a_{\sigma(\ell+2)},\ldots,a_{\sigma(k+\ell-1)}), \label{12}%
\end{equation}
$\boldsymbol{a}=(a_{1},\ldots,a_{k+\ell-1})\in A^{\times(k+\ell-1)}$. Formulas
of the kind (\ref{12}) will often appear below. Apparently, this kind of
formulas first appeared in \cite{g63} (for the case of a, generically non
commutative, ring). Accordingly, I will refer to them as
\emph{Gerstenhaber-type fomulas}, without further comment.

Now, let $L$ be an $A$-module.

\begin{definition}
\label{Def1}An $A$\emph{-module multiderivation of }$L$ (or, simply, a
\emph{multiderivation}, if there is no risk of confusion)\emph{ with }%
$k$\emph{ entries} is a pair $\mathbb{X}=(X,\sigma_{\mathbb{X}})$ where
$\sigma_{\mathbb{X}}$ is a graded symmetric, $A$-multilinear map
$\sigma_{\mathbb{X}}:L^{\times(k-1)}\longrightarrow\mathrm{Der}A$, called the
\emph{symbol} of $\mathbb{X}$, and $X$ is a graded symmetric, $K$-multilinear
map $X:L^{\times k}\longrightarrow L$, such that
\[
X(\xi_{1},\ldots,\xi_{k-1},a\xi_{k})=\sigma_{\mathbb{X}}(\xi_{1},\ldots
,\xi_{k-1}|a)\cdot\xi_{k}+(-)^{\chi^{\prime}}a\cdot X(\xi_{1},\ldots,\xi
_{k}),
\]
where $\chi^{\prime}=(\bar{X}+\bar{\xi}_{1}+\cdots+\bar{\xi}_{k-1})\bar{a}$,
and I put $\sigma_{\mathbb X}(\xi_{1},\ldots,\xi_{k-1}|a):= \sigma_{\mathbb X}(\xi_{1},\ldots,\xi_{k-1})(a)$, for
all $\xi_{1},\ldots,\xi_{k}\in L$, $a\in A$.
\end{definition}

\begin{example}
Let $\mathbb{X}=(X,\sigma_{\mathbb{X}})$ be an $A$-module multiderivation. If
$A=K$, then $\sigma_{\mathbb{X}}=0$ and $X$ is simply a $K$-multilinear map.
Conversely, every $K$-multilinear map is a $K$-module multiderivation.
\end{example}

A version of Definition \ref{Def1} appeared in \cite{cm08} (Section 2.1).
However, in that paper, the authors consider skew-symmetric multi-derivations
of ungraded modules of smooth sections of vector bundles. I consider symmetric
multiderivations for convenience. One can pass from the latter to the former
via suitable d\'{e}calage isomorphisms.

\subsection{Lie algebras of multiderivations}

Let $\mathrm{Der}_{A}^{k}L$ (or simply $\mathrm{Der}^{k}L$) denote the set of
multiderivations of $L$ with $k$ entries (beware that, in \cite{cm08}, the
authors denote by $\mathrm{Der}^{k}$ skew-symmetric multiderivations with
$k+1$ entries). In particular, $\mathrm{Der}^{0}L=L$ and $\mathrm{Der}%
^{1}L=\mathrm{Der}L$. Clearly, $\mathrm{Der}^{k}L$ is a graded $A$-module. Put
$\mathrm{Der}_{A}^{\bullet}L\equiv\mathrm{Der}^{\bullet}L:=\bigoplus
_{k}\mathrm{Der}_{A}^{k}L$, which is naturally bi-graded. The $A$-module
$\mathrm{Der}^{\bullet}L$ can be given the structure of a graded (not
bi-graded) Lie algebra as follows. For $\mathbb{X}$ a multiderivation with $k$
entries, and $\mathbb{Y}$ a multiderivations with $\ell$ entries, let
$[\mathbb{X},\mathbb{Y}]$ be the multiderivation with $k+\ell-1$ entries
defined by%
\[
\lbrack\mathbb{X},\mathbb{Y}]:=(\mathbb{[}X,Y],\sigma_{\lbrack\mathbb{X}%
,\mathbb{Y}]}),
\]
where%
\[
\lbrack X,Y]:=X\circ Y-(-)^{XY}Y\circ X,
\]
$X\circ Y$ being given by a Gerstenhaber-type formula (\ref{12}), and
\[
\sigma_{\lbrack\mathbb{X},\mathbb{Y}]}:=\sigma_{\mathbb{X}}\circ
Y-(-)^{\mathbb{XY}}\sigma_{\mathbb{Y}}\circ X+[\sigma_{\mathbb{X}}%
,\sigma_{\mathbb{Y}}]
\]
where $\sigma_{\mathbb{X}}\circ Y$ is given again by a Gerstenhaber-type
formula, and $[\sigma_{\mathbb{X}},\sigma_{\mathbb{Y}}]$ is given by%
\begin{equation}
\lbrack\sigma_{\mathbb{X}},\sigma_{\mathbb{Y}}](\xi_{1},\ldots,\xi_{k+\ell
-2}):=\sum_{\sigma\in S_{k-1,\ell-1}}(-)^{\chi}\alpha(\sigma,\boldsymbol{\xi
})[\sigma_{\mathbb{X}}(\xi_{\sigma(1)},\ldots,\xi_{\sigma(k-1)}),\sigma
_{\mathbb{Y}}(\xi_{\sigma(k)},\ldots,\xi_{\sigma(k+\ell-2)})], \label{13}%
\end{equation}
with $\chi=\mathbb{\bar{Y}}(\bar{\xi}_{\sigma(1)}+\cdots+\bar{\xi}%
_{\sigma(k-1)})$, and $\boldsymbol{\xi}=(\xi_{1},\ldots,\xi_{k+\ell-2})\in
L^{\times(k+\ell-2)}$.

Now, consider the graded commutative algebra $\mathrm{Sym}_{A}(L,A)$ of graded
\emph{symmetric forms} on $L$, i.e., $A$-multilinear, graded symmetric maps
$L\times\cdots\times L\longrightarrow A$. Consider also the symmetric algebra
$S_{A}^{\bullet}L$ of $L$. I will refer to elements in $S_{A}^{\bullet}L$ as
\emph{symmetric tensors} (or just \emph{tensors}). See Appendix
\ref{SecMultAlg} for notations about forms and tensors and structures on them
relevant for the purposes of this paper.

\begin{proposition}
\label{Prop6}There is a canonical morphism of graded Lie algebras
\[
\eta:\mathrm{Der}^{\bullet}L\longrightarrow\{\text{derivations of
}\mathrm{Sym}_{A}(L,A)\},
\]
such that, $\eta$ maps multiderivations with $k$ entries to derivations taking
$\ell$-forms to $(\ell+k-1)$-forms. Moreover, if $L$ is projective and
finitely generated, then $\eta$ is an isomorphism.
\end{proposition}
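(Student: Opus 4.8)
The plan is to realise $\eta$ concretely as a Lie-derivative-type operator on forms, and then to verify in turn that it is well defined, that it lands in derivations, that it is bracket-preserving, and finally that it is bijective in the projective finitely generated case.

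First I would write down $\eta$. For $\mathbb{X}=(X,\sigma_{\mathbb{X}})\in\mathrm{Der}^{k}L$ and $\omega\in\mathrm{Sym}^{\ell}_{A}(L,A)$, I define $\eta(\mathbb{X})\omega$ by the Chevalley--Eilenberg/Lie-derivative-type formula
\begin{align*}
(\eta(\mathbb{X})\omega)(\xi_{1},\ldots,\xi_{k+\ell-1}):={}&\sum_{\sigma\in S_{k-1,\ell}}(-)^{\epsilon}\alpha(\sigma,\boldsymbol{\xi})\,\sigma_{\mathbb{X}}(\xi_{\sigma(1)},\ldots,\xi_{\sigma(k-1)}\,|\,\omega(\xi_{\sigma(k)},\ldots,\xi_{\sigma(k+\ell-1)}))\\
&-\sum_{\tau\in S_{k,\ell-1}}(-)^{\epsilon'}\alpha(\tau,\boldsymbol{\xi})\,\omega(X(\xi_{\tau(1)},\ldots,\xi_{\tau(k)}),\xi_{\tau(k+1)},\ldots,\xi_{\tau(k+\ell-1)}),
\end{align*}
where $\epsilon,\epsilon'$ are the Koszul signs prescribed by the sign rule. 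The first sum is the \emph{symbol acting on the value} of $\omega$, the second is $X$ \emph{inserted into the arguments} of $\omega$; by construction this raises form-degree by $k-1$. I would then check that $\eta(\mathbb{X})\omega$ is a genuine element of $\mathrm{Sym}^{\ell+k-1}_{A}(L,A)$: graded symmetry is forced by the unshuffle summations together with the symmetry of $X$, $\sigma_{\mathbb{X}}$ and $\omega$, while $A$-multilinearity is the decisive point. Pulling a scalar $a$ out of the last slot makes the $X$-term fail $A$-linearity by exactly the amount $\sigma_{\mathbb{X}}(\cdots|a)$ produced by the Leibniz rule of Definition \ref{Def1}, and this anomaly cancels against the corresponding term coming from the symbol sum. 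This cancellation is precisely what the symbol in the definition of a multiderivation is designed to achieve, so it is here that Definition \ref{Def1} is used essentially.

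Next I would show $\eta(\mathbb{X})$ is a derivation of the graded commutative algebra $(\mathrm{Sym}_{A}(L,A),\odot)$ and that $\eta$ is a morphism of graded Lie algebras. The Leibniz rule $\eta(\mathbb{X})(\omega\odot\omega')=\eta(\mathbb{X})\omega\odot\omega'+(-)^{\cdots}\omega\odot\eta(\mathbb{X})\omega'$ is a routine reorganisation of the unshuffle sums: splitting each global unshuffle according to how the distinguished arguments (those fed to $\sigma_{\mathbb{X}}$, resp.\ to $X$) are distributed between the two factors reproduces exactly the two terms on the right. The bracket identity $\eta([\mathbb{X},\mathbb{Y}])=[\eta(\mathbb{X}),\eta(\mathbb{Y})]$ is the heart of the statement: both sides are derivations of $\mathrm{Sym}_{A}(L,A)$ (a graded commutator of graded derivations is again one), and I would verify the equality by expanding the graded commutator on an arbitrary form and matching it term-by-term against the Gerstenhaber-type composition formulas \eqref{12}, \eqref{13} defining $[\mathbb{X},\mathbb{Y}]$. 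I expect \textbf{this to be the main obstacle}: one must simultaneously track the unshuffle signs $\alpha(\sigma,\boldsymbol{\xi})$ and the Koszul signs $\epsilon,\epsilon'$ while recognising that the ``double symbol'' and mixed $X$/$\sigma$ contributions in the commutator organise exactly into the four summands of $\sigma_{[\mathbb{X},\mathbb{Y}]}$ and $[X,Y]$. The combinatorics is essentially that underlying the Gerstenhaber bracket, so the identity holds, but the sign bookkeeping is delicate.

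Finally I would prove that $\eta$ is an isomorphism when $L$ is projective and finitely generated. The key structural fact is that then $L$ is reflexive, $\mathrm{Sym}^{m}_{A}(L,A)\cong\mathrm{Hom}_{A}(S^{m}_{A}L,A)$, and $\mathrm{Sym}_{A}(L,A)$ is generated as an $A$-algebra by $A=\mathrm{Sym}^{0}$ and $L^{*}=\mathrm{Sym}^{1}$. Hence any derivation is determined by its restrictions to these two generating degrees, which gives injectivity at once: if $\eta(\mathbb{X})=0$ its action on $A$ forces $\sigma_{\mathbb{X}}=0$, and then its action on $L^{*}$ forces $X=0$ by reflexivity. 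For surjectivity, given a derivation $D$ raising form-degree by $k-1$, the restriction $D|_{A}\colon A\to\mathrm{Sym}^{k-1}_{A}(L,A)$ is a $K$-derivation of $A$ into an $A$-module and dualises to a candidate symbol $\sigma$; the restriction $D|_{L^{*}}\colon L^{*}\to\mathrm{Sym}^{k}_{A}(L,A)$ is first order over $D|_{A}$, and subtracting the symbol contribution makes the resulting functional $A$-linear, producing $X\colon L^{\times k}\to L^{**}=L$ whose Leibniz rule is exactly that of Definition \ref{Def1} with symbol $\sigma$. Setting $\mathbb{X}:=(X,\sigma)$, both $\eta(\mathbb{X})$ and $D$ are derivations agreeing on the generators $A$ and $L^{*}$, hence $\eta(\mathbb{X})=D$.
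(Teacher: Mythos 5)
Your proposal is correct and follows essentially the same route as the paper: you define $\eta(\mathbb{X})\omega$ by the same higher Chevalley--Eilenberg formula (the paper writes it compactly as $\sigma_{\mathbb{X}}\circ\omega-(-)^{\mathbb{X}\omega}\omega\circ X$ with Gerstenhaber-type compositions), and your inverse construction in the projective finitely generated case --- reading off the symbol from $D|_{A}$ and recovering $X$ from $D$ on $1$-forms via reflexivity $L\simeq L^{\ast\ast}$ --- is exactly the paper's definition of $\eta^{-1}$. The only difference is that you spell out the verification steps (well-definedness, Leibniz rule, bracket compatibility) that the paper compresses into ``a careful but straightforward computation.''
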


\begin{proof}
Let $\mathbb{X}$ be a multiderivation with $k$ entries and $\omega$ an $\ell
$-form. Put
\[
\eta(\mathbb{X})(\omega):=\sigma_{\mathbb{X}}\circ\omega-(-)^{\mathbb{X}%
\omega}\omega\circ X,
\]
where $\omega\circ X$ is given by a Gerstenhaber-type formula and
\begin{equation}
(\sigma_{\mathbb{X}}\circ\omega)(\xi_{1},\ldots,\xi_{\ell+k-1}):=\sum
_{\sigma\in S_{k-1,\ell}}(-)^{\chi}\alpha(\sigma,\boldsymbol{\xi}%
)\sigma_{\mathbb{X}}(\xi_{\sigma(1)},\ldots,\xi_{\sigma(k-1)}|\omega
(\xi_{\sigma(k)},\ldots,\xi_{\sigma(\ell+k-1)})), \label{15}%
\end{equation}
with $\chi=\bar{\omega}(\bar{\xi}_{\sigma(1)}+\cdots+\bar{\xi}_{\sigma(k-1)}%
)$, and $\boldsymbol{\xi}=(\xi_{1},\ldots,\xi_{\ell+k-1})\in L^{\times
(\ell+k-1)}$. A careful but straightforward computation shows that $\eta$ is a
well defined morphism of graded Lie algebras.

Now, let $L$ be projective and finitely generated. Then $L\simeq L^{\ast\ast}$
and an inverse homomorphism $\eta^{-1}$ is implicitly defined as follows. Let
$D$ be a derivation of $\mathrm{Sym}_{A}(L,A)$ taking $\ell$-forms to
$(\ell+k-1)$-forms, and let $\omega\in L^{\ast}=\mathrm{Hom}_{A}(L,A)$ be a
$1$-form. Put $\eta^{-1}(D):=(X_{D},\sigma_{D})$, where
\[
\sigma_{D}(\xi_{1},\ldots,\xi_{k-1}|a):=(-)^{\chi}(Da)(\xi_{1},\ldots
,\xi_{k-1}),
\]
with $\chi=(\bar{\xi}_{1}+\cdots+\bar{\xi}_{k-1})\bar{a}$, and
\[
\omega(X_{D}(\xi_{1},\ldots,\xi_{k})):=\sum_{i=1}^{k}(-)^{\omega
D+\chi^{\prime}}\sigma_{D}(\xi_{1},\ldots,\widehat{\xi_{i}},\ldots,\xi
_{k}|\omega(\xi_{i}))+(-)^{\omega D}D(\omega)(\xi_{1},\ldots,\xi_{k}),
\]
with $\chi^{\prime}=\bar{\omega}(\bar{\xi}_{1}+\cdots+\widehat{\bar{\xi}_{i}%
}+\cdots+\bar{\xi}_{k})+\bar{\xi}_{i}(\bar{\xi}_{i+1}+\cdots+\bar{\xi}_{k})$,
and $\xi_{1},\ldots,\xi_{k}\in L$.
\end{proof}

\begin{proposition}
\label{Prop7}There is a canonical inclusion of graded Lie algebras
\[
\nu:\mathrm{Der}^{\bullet}L\longrightarrow\{\text{multiderivations of }%
S_{A}^{\bullet}L\},
\]
such that $\nu$ maps surjectively $k$-entry multiderivations to $k$-entry
multiderivations taking $S_{A}^{\ell_{1}}L\times\cdots\times S_{A}^{\ell_{k}%
}L$ to $S_{A}^{\ell_{1}+\cdots+\ell_{k}+k-1}L$.
\end{proposition}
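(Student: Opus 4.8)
The plan is to realise $\nu$ as the extension of an $A$-module multiderivation of $L$ to a multiderivation of the graded commutative $A$-algebra $S_A^\bullet L$ by the graded Leibniz rule, and then to verify that this assignment is an injective morphism of graded Lie algebras whose image is the announced homogeneous piece. Concretely, given $\mathbb{X}=(X,\sigma_{\mathbb{X}})\in\mathrm{Der}^k L$, I would define $\nu(\mathbb{X})$ on a tuple of homogeneous tensors $t_i=\xi^{(i)}_1\cdots\xi^{(i)}_{\ell_i}\in S_A^{\ell_i}L$ by a Gerstenhaber-type formula of the kind (\ref{12}), (\ref{15}): sum over all ways of selecting one factor $\xi^{(i)}_{j_i}$ from each argument, feed the selected factors into $X$, multiply the result by the remaining factors, and insert the unshuffle sign $\alpha$ produced by the rearrangement. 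On the degree-$0$ part $A=S_A^0 L$ the selection is replaced by the symbol $\sigma_{\mathbb{X}}$, and $\nu(\mathbb{X})$ is declared to vanish as soon as two or more arguments lie in $A$. Since feeding one factor of each of the $k$ arguments into the $k$-ary map $X$ (or the symbol) and leaving the remaining factors untouched changes the total symmetric degree by a fixed amount, the homogeneity assertion of the statement is immediate from this formula.

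\textbf{Well-definedness.} The first point to verify is that $\nu(\mathbb{X})$ is a well defined operator on $S_A^\bullet L$, that is, that the prescription is compatible with the defining relations of the symmetric algebra over $A$: graded commutativity and, crucially, the $A$-balancing $a\cdot(\xi_1\cdots\xi_\ell)=(a\xi_1)\cdots\xi_\ell$. Compatibility with graded commutativity is forced by the graded symmetry of $X$ and $\sigma_{\mathbb{X}}$ together with the signs $\alpha$; compatibility with $A$-balancing is \emph{precisely} the Leibniz identity of Definition \ref{Def1} relating $X$ and $\sigma_{\mathbb{X}}$. This is the step in which the defining axiom of a multiderivation of $L$ is consumed. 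Granting well-definedness, $\nu(\mathbb{X})$ is graded symmetric in its $k$ entries and satisfies the algebra-multiderivation Leibniz rule by construction, so it is a genuine $k$-entry multiderivation of $S_A^\bullet L$.

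\textbf{Lie morphism.} Next I would show $\nu([\mathbb{X},\mathbb{Y}])=[\nu(\mathbb{X}),\nu(\mathbb{Y})]$, exploiting that a multiderivation of $S_A^\bullet L$ is determined by its restriction to the generators $A\cup L$ in each slot; it therefore suffices to check the identity on arguments drawn from $L$ and from $A$. On pure $L$-arguments each inner application of $\nu(\mathbb{Y})$ returns a single element of $L$ with no leftover factors, so the Gerstenhaber composition $\nu(\mathbb{X})\circ\nu(\mathbb{Y})$ reproduces $X\circ Y$ exactly, and the identity reduces to the definition of the bracket on $\mathrm{Der}^\bullet L$. I expect the \textbf{main obstacle} to be the verification on generator tuples containing an $A$-entry: here the composition of the extended operators produces symbol-type outputs in both composition orders, and one must confirm that these, with their Koszul signs mediated by the unshuffle signs $\alpha$, assemble exactly into the symbol bracket $\sigma_{[\mathbb{X},\mathbb{Y}]}=\sigma_{\mathbb{X}}\circ Y-(-)^{\mathbb{XY}}\sigma_{\mathbb{Y}}\circ X+[\sigma_{\mathbb{X}},\sigma_{\mathbb{Y}}]$ of the excerpt, the cross term $[\sigma_{\mathbb{X}},\sigma_{\mathbb{Y}}]$ of (\ref{13}) being the most delicate contribution to match. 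This is the analogue of the ``careful but straightforward'' sign computation behind Proposition \ref{Prop6}.

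\textbf{Injectivity and surjectivity.} Injectivity is clear, since $X=\nu(\mathbb{X})|_{L^{\times k}}$ and $\sigma_{\mathbb{X}}$ is read off from the arguments having exactly one entry in $A$; thus $\mathbb{X}$ is recovered from $\nu(\mathbb{X})$. For the surjectivity clause, I would start from an arbitrary $k$-entry multiderivation $H$ of $S_A^\bullet L$ with the stated homogeneity and restrict it to the generators: its values on $L$-arguments and on arguments with a single $A$-entry land, by the homogeneity of $H$, in $S^1_A L=L$ and $S^0_A L=A$ respectively, yielding a candidate pair $(X,\sigma_{\mathbb{X}})$. Because $H$ is Leibniz in each slot, this pair satisfies Definition \ref{Def1}, and then $\nu(X,\sigma_{\mathbb{X}})=H$ by the determined-by-generators principle used above. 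Hence $\nu$ maps $\mathrm{Der}^k L$ isomorphically onto the space of $k$-entry multiderivations of $S_A^\bullet L$ with the prescribed degree behaviour, which proves the proposition.
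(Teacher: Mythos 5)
Your construction---extending $\mathbb{X}$ to $S_{A}^{\bullet}L$ by the Leibniz rule, noting that a multiderivation of $S_{A}^{\bullet}L$ is determined by its values on the generators $A\cup L$, and reading the inverse off the restriction to generators---is exactly the paper's (one-sentence) proof with the omitted details filled in correctly. Note only that the symmetric degree produced by your formula is $\ell_{1}+\cdots+\ell_{k}-k+1$, which is precisely what your surjectivity argument needs in order to force vanishing on tuples with two or more entries from $A$; the exponent $+k-1$ in the statement is a sign slip, so your proof establishes the (correct) intended assertion rather than the literal one.
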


\begin{proof}
Let $\mathbb{X}$ be a multiderivation of $L$. It is easy to see that
$\mathbb{X}$ can be uniquely extended to $S_{A}^{\bullet}L$ as a multiderivation.
\end{proof}

\subsection{Derivation valued symmetric forms}

Now, let $P,Q$ be $A$-modules, and $\mathcal{L}^{k}(P)$ be the set of pairs
$(\mathbb{X},\nabla)$, where $\mathbb{X}$ is a multiderivation of $L$ with $k$
entries, and $\nabla$ is a $\mathrm{Der}^{L}P$-valued $(k-1)$-form, i.e., a
graded symmetric, $A$-multilinear map $\nabla:L^{\times(k-1)}\longrightarrow
\mathrm{Der}^{L}P$, such that, for all $\xi_{1},\ldots,\xi_{k-1}\in L$
\begin{equation}
\sigma_{\nabla(\xi_{1},\ldots,\xi_{k-1})}=\sigma_{\mathbb{X}}(\xi_{1}%
,\ldots,\xi_{k-1}). \label{29}%
\end{equation}
In other words
\[
\nabla(\xi_{1},\ldots,\xi_{k-1}|ap)=(-)^{\chi}a\nabla(\xi_{1},\ldots,\xi
_{k-1}|p)+\sigma_{\mathbb{X}}(\xi_{1},\ldots,\xi_{k-1}|a)p,
\]
where $\chi=(\bar{\nabla}+\bar{\xi}_{1}+\cdots+\bar{\xi}_{k-1})\bar{a}$, $a\in
A$, $p\in P$. Put $\mathcal{L}(P):=\bigoplus_{k}\mathcal{L}^{k}(P)$.

Similarly, let $\mathcal{R}^{k}(Q)$ be the set of pairs $(\mathbb{X},\Delta)$,
where $\mathbb{X}$ is as above and $\Delta$ is a $\mathrm{Der}^{R}Q$-valued
$(k-1)$-form, i.e., a graded symmetric, $A$-multilinear map $\Delta
:L^{\times(k-1)}\longrightarrow\mathrm{Der}^{R}Q$, such that, for all $\xi
_{1},\ldots,\xi_{k-1}\in L$
\begin{equation}
\sigma_{\Delta(\xi_{1},\ldots,\xi_{k-1})}=-\sigma_{\mathbb{X}}(\xi_{1}%
,\ldots,\xi_{k-1}). \label{18}%
\end{equation}
In other words
\[
\Delta(\xi_{1},\ldots,\xi_{k-1}|aq)=(-)^{\chi^{\prime}}a\Delta(\xi_{1}%
,\ldots,\xi_{k-1}|q)-\sigma_{\mathbb{X}}(\xi_{1},\ldots,\xi_{k-1}|a)q,
\]
where $\chi^{\prime}=(\bar{\Delta}+\bar{\xi}_{1}+\cdots+\bar{\xi}_{k-1}%
)\bar{a}$, $a\in A$, $q\in Q$. Notice the minus sign in the right hand side of
(\ref{18}), in contrast with Formula (\ref{29}). Put $\mathcal{R}%
(Q):=\bigoplus_{k}\mathcal{R}^{k}(Q)$.

Both $\mathcal{L}(P)$ and $\mathcal{R}(Q)$ can be given a structure of graded
(not bi-graded) Lie algebra as follows. For $(\mathbb{X},\nabla)\in
\mathcal{L}^{k}(P)$, and $(\mathbb{X}^{\prime},\nabla^{\prime})\in
\mathcal{L}^{\ell}(P)$, let $[(\mathbb{X},\nabla),(\mathbb{X}^{\prime}%
,\nabla^{\prime})]\in\mathcal{L}^{k+\ell-1}(P)$ be defined by%
\[
\lbrack(\mathbb{X},\nabla),(\mathbb{X}^{\prime},\nabla^{\prime}%
)]:=(\mathbb{[X},\mathbb{Y}],\nabla^{\prime\prime}),
\]
with
\begin{equation}
\nabla^{\prime\prime}:=\nabla\circ X^{\prime}-(-)^{\mathbb{XX}^{\prime}}%
\nabla^{\prime}\circ X+[\nabla,\nabla^{\prime}] \label{30}%
\end{equation}
where $\nabla\circ Y$ is given by a Gerstenhaber-type formula, and
$[\nabla,\nabla^{\prime}]$ is given by a similar formula as (\ref{13}).
Similarly, for $(\mathbb{X},\Delta)\in\mathcal{R}^{k}(P)$, and $(\mathbb{X}%
^{\prime},\Delta^{\prime})\in\mathcal{R}^{\ell}(P)$, let $[(\mathbb{X}%
,\Delta),(\mathbb{X}^{\prime},\Delta^{\prime})]\in\mathcal{R}^{k+\ell-1}(P)$
be defined by%
\[
\lbrack(\mathbb{X},\Delta),(\mathbb{X}^{\prime},\Delta^{\prime}%
)]:=(\mathbb{[X},\mathbb{Y}],\Delta^{\prime\prime}),
\]
with
\begin{equation}
\Delta^{\prime\prime}:=\Delta\circ X^{\prime}-(-)^{\mathbb{XX}^{\prime}}%
\Delta^{\prime}\circ X-[\Delta,\Delta^{\prime}]. \label{19}%
\end{equation}
Notice the minus sign in front of the third summand of the right hand side of
(\ref{19}), in contrast with Formula (\ref{30}).

I will say that an element $(\mathbb{X},\nabla)$ of $\mathcal{L}(P)$ (resp.,
$\mathcal{R}(P)$) is \emph{subordinate} to the multiderivation $\mathbb{X}$.

\begin{proposition}
\label{Prop8}There is a canonical morphism of graded Lie algebras
\[
\eta^{L}:\mathcal{L}(P)\longrightarrow\{\mathrm{Sym}_{A}(L,A)\text{-module
derivations of }\mathrm{Sym}_{A}(L,P)\}
\]
such that, for $(\mathbb{X},\nabla)\in\mathcal{L}^{k}(P)$, $\eta
^{L}(\mathbb{X},\nabla)$ takes $\ell$-forms to $(\ell+k-1)$-forms, and the
symbol of $\eta^{L}(\mathbb{X},\nabla)$ is $\eta(\mathbb{X})$. Moreover, if
$L$ is projective and finitely generated, $\eta^{L}$ is an isomorphism.
\end{proposition}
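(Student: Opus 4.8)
The plan is to imitate the construction of $\eta$ in Proposition \ref{Prop6}, using the full datum $\nabla$ in place of the symbol $\sigma_{\mathbb{X}}$ whenever the operator acts on the $P$-valued part. Concretely, for $(\mathbb{X},\nabla)\in\mathcal{L}^{k}(P)$ and a $P$-valued $\ell$-form $\Omega\in\mathrm{Sym}_{A}^{\ell}(L,P)$, I would put
\[
\eta^{L}(\mathbb{X},\nabla)(\Omega):=\nabla\circ\Omega-(-)^{(\mathbb{X},\nabla)\Omega}\,\Omega\circ X,
\]
where $\Omega\circ X$ is the Gerstenhaber-type composition (\ref{12}) feeding the $k$-entry operator $X$ into $\Omega$, and $\nabla\circ\Omega$ is the evident analogue of (\ref{15}),
\[
(\nabla\circ\Omega)(\xi_{1},\ldots,\xi_{\ell+k-1}):=\sum_{\sigma\in S_{k-1,\ell}}(-)^{\chi}\alpha(\sigma,\boldsymbol{\xi})\,\nabla(\xi_{\sigma(1)},\ldots,\xi_{\sigma(k-1)}|\Omega(\xi_{\sigma(k)},\ldots,\xi_{\sigma(\ell+k-1)})),
\]
in which the derivation $\nabla(\xi_{\sigma(1)},\ldots,\xi_{\sigma(k-1)})\in\mathrm{Der}^{L}P$ is applied to the value $\Omega(\xi_{\sigma(k)},\ldots)\in P$. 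Counting entries makes the degree claim (that $\ell$-forms go to $(\ell+k-1)$-forms) immediate.

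Next I would check well-definedness, namely that $\eta^{L}(\mathbb{X},\nabla)$ is a $\mathrm{Sym}_{A}(L,A)$-module derivation of $\mathrm{Sym}_{A}(L,P)$ with symbol $\eta(\mathbb{X})$. Here $\mathrm{Sym}_{A}(L,P)$ is a module over $\mathrm{Sym}_{A}(L,A)$ via the symmetric product of forms, multiplying $P$-values against $A$-values. The Leibniz rule $\eta^{L}(\mathbb{X},\nabla)(\omega\,\Omega)=\eta(\mathbb{X})(\omega)\,\Omega+(-)^{(\mathbb{X},\nabla)\omega}\,\omega\,\eta^{L}(\mathbb{X},\nabla)(\Omega)$ splits into an $A$-part, already established in Proposition \ref{Prop6}, and a $P$-part governed by the compatibility (\ref{29}) between the symbol of $\nabla$ and $\sigma_{\mathbb{X}}$; the two pieces assemble precisely because (\ref{29}) is the defining constraint of $\mathcal{L}^{k}(P)$. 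Reading the $A$-valued part off the formula identifies the symbol as $\eta(\mathbb{X})$.

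The core of the argument is that $\eta^{L}$ is a morphism of graded Lie algebras. Since the symbol of $\eta^{L}(\mathbb{X},\nabla)$ is $\eta(\mathbb{X})$ and $\eta$ is already a Lie algebra morphism by Proposition \ref{Prop6}, the symbol components on both sides agree for free; it remains to match the operator parts. One expands the commutator $[\eta^{L}(\mathbb{X},\nabla),\eta^{L}(\mathbb{X}^{\prime},\nabla^{\prime})]$ through the compositions above and checks that its cross terms reproduce, term by term, the three summands $\nabla\circ X^{\prime}$, $-(-)^{\mathbb{XX}^{\prime}}\nabla^{\prime}\circ X$, and $[\nabla,\nabla^{\prime}]$ of the bracket formula (\ref{30}); indeed, (\ref{30}) is designed exactly so that this matching succeeds. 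I expect \textbf{this sign-and-reindexing computation to be the main obstacle}: it amounts to reorganizing nested unshuffle sums and tracking Koszul signs through the insertions, i.e. the ``careful but straightforward computation'' already invoked for $\eta$, now carrying the extra $P$-valued data.

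Finally, for the isomorphism statement when $L$ is projective and finitely generated, I would build the inverse explicitly. Given a $\mathrm{Sym}_{A}(L,A)$-module derivation $D$ of $\mathrm{Sym}_{A}(L,P)$ with symbol $\sigma_{D}$, Proposition \ref{Prop6} supplies a unique multiderivation $\mathbb{X}=\eta^{-1}(\sigma_{D})$. I then define $\nabla$ as the restriction of $D$ to $0$-forms, $\nabla(\xi_{1},\ldots,\xi_{k-1}|p):=D(p)(\xi_{1},\ldots,\xi_{k-1})$ for $p\in P=\mathrm{Sym}_{A}^{0}(L,P)$, which is legitimate because the $\Omega\circ X$ term vanishes on $0$-forms. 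Applying the module-Leibniz rule of $D$ to a product $a\,p$ with $a\in A$ returns precisely the compatibility (\ref{29}), so $(\mathbb{X},\nabla)\in\mathcal{L}^{k}(P)$ and $\nabla$ is indeed $\mathrm{Der}^{L}P$-valued with symbol $\sigma_{\mathbb{X}}$. Projectivity and finite generation give $\mathrm{Sym}_{A}(L,P)\cong\mathrm{Sym}_{A}(L,A)\otimes_{A}P$, whence $\mathrm{Sym}_{A}(L,P)$ is generated over $\mathrm{Sym}_{A}(L,A)$ by $P$; therefore any module derivation is determined by its symbol together with its restriction to $P$. This forces $\eta^{L}(\mathbb{X},\nabla)=D$ and shows $\eta^{L}$ is bijective, completing the proof.
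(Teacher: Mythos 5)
Your proposal is correct and follows essentially the same route as the paper: the same defining formula $\eta^{L}(\mathbb{X},\nabla)(\Omega)=\nabla\circ\Omega-(-)^{\mathbb{X}\Omega}\Omega\circ X$, the same "careful but straightforward" verification of the Leibniz rule and bracket compatibility, and the same inverse obtained by restricting a module derivation to $0$-forms and taking its symbol through $\eta^{-1}$. The only discrepancy is that the paper's inverse carries a Koszul sign, $\nabla_{D}(\xi_{1},\ldots,\xi_{k-1}|p):=(-)^{(\bar{\xi}_{1}+\cdots+\bar{\xi}_{k-1})\bar{p}}(Dp)(\xi_{1},\ldots,\xi_{k-1})$, which your formula omits and which is needed for the graded bookkeeping to close.
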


\begin{proof}
Let $(\mathbb{X},\nabla)\in\mathcal{L}(P)$. Put $\eta^{L}(\mathbb{X}%
,\nabla):=(D,\eta(\mathbb{X}))$, where, for any $P$-valued form $\Omega$,
\[
D(\Omega):=\nabla\circ\Omega-(-)^{\mathbb{X}\Omega}\Omega\circ X,
\]
with $\Omega\circ X$ being given by a Gerstenhaber-type formula, and
$\nabla\circ\Omega$ being given by a similar formula as (\ref{15}). A careful
but straightforward computation shows that $\eta^{L}$ is a well defined
morphism of graded Lie algebras.

If $L$ is projective and finitely generated, $\eta$ is invertible and one can
define $(\eta^{L})^{-1}$ as follows. Let $\mathbb{D}=(D,\sigma_{\mathbb{D}})$
be a $\mathrm{Sym}_{A}(L,A)$-module derivation of $\mathrm{Sym}_{A}(L,P)$ such
that $D$ takes $\ell$-forms to $(\ell+k-1)$-forms. Put
\[
(\eta^{L})^{-1}(\mathbb{D}):=(\eta^{-1}(\sigma_{\mathbb{D}}),\nabla_{D}),
\]
where
\[
\nabla_{D}(\xi_{1},\ldots,\xi_{k-1}|p):=(-)^{\chi}(Dp)(\xi_{1},\ldots
,\xi_{k-1}),
\]
where $\chi=(\bar{\xi}_{1}+\cdots+\bar{\xi}_{k-1})\bar{p}$, $\xi_{1}%
,\ldots,\xi_{k-1}\in L$, and $p\in P$.
\end{proof}

\begin{proposition}
\label{Prop9}There is a canonical morphism of graded Lie algebras
\[
\eta^{R}:\mathcal{R}(Q)\longrightarrow\{\mathrm{Sym}_{A}(L,A)\text{-module
derivations of }S_{A}^{\bullet}L\otimes_{A}Q\}
\]
such that, for $(\mathbb{X},\Delta)\in\mathcal{R}^{k}(Q)$, $\eta
^{R}(\mathbb{X},\Delta)$ takes $S_{A}^{\ell}L\otimes_{A}Q$ to $S_{A}%
^{\ell-k+1}L\otimes_{A}Q$, and the symbol of $\eta^{R}(\mathbb{X},\Delta)$ is
$\eta(\mathbb{X})$. Moreover, if $L$ is projective and finitely generated,
$\eta^{R}$ is an isomorphism.
\end{proposition}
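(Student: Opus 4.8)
The plan is to mimic the proof of Proposition \ref{Prop8}, replacing the module $\mathrm{Sym}_A(L,P)$, on which forms act by multiplication, with the module $S_A^\bullet L\otimes_A Q$, on which forms act by contraction (interior product): an $\ell$-form $\omega$ sends $S_A^p L\otimes_A Q$ to $S_A^{p-\ell}L\otimes_A Q$ by contracting the first $\ell$ arguments of a symmetric tensor, so that $S_A^\bullet L\otimes_A Q$ becomes a $\mathrm{Sym}_A(L,A)$-module. This contraction-versus-multiplication duality is what accounts for the various sign differences between the two propositions, and for the fact that $\eta^R(\mathbb{X},\Delta)$ lowers, rather than raises, the tensor degree.

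First I would define $\eta^R(\mathbb{X},\Delta):=(D,\eta(\mathbb{X}))$ for $(\mathbb{X},\Delta)\in\mathcal{R}^k(Q)$, where $D$ acts on a decomposable element $\xi_1\cdots\xi_\ell\otimes q\in S_A^\ell L\otimes_A Q$ by a Gerstenhaber-type formula with two summands: one in which $k$ of the factors $\xi_i$ are fed into $X$ (using the extension $\nu(\mathbb{X})$ of Proposition \ref{Prop7}, the result $X(\xi_{\sigma(1)},\ldots,\xi_{\sigma(k)})\in L$ being multiplied back against the remaining factors), and one in which $k-1$ of the factors are contracted against $\Delta$ and the resulting $\mathrm{Der}^R Q$-derivation is applied to $q$. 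Both summands drop the tensor degree by $k-1$, as required. I would then check that $D$ descends from $\otimes_K$ to $\otimes_A$; this is exactly where the symbol relation (\ref{18}) and its minus sign enter, compensating the sign in the action of $\sigma_{\mathbb{X}}$ on $q$ against the sign produced when $X$ or the contraction meets a coefficient $a\in A$. A parallel computation shows that $(D,\eta(\mathbb{X}))$ is a $\mathrm{Sym}_A(L,A)$-module derivation whose symbol is $\eta(\mathbb{X})$, the degree count working out because contraction is dual to multiplication.

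The heart of the proof, and the main obstacle, is verifying that $\eta^R$ intertwines the brackets, i.e. that $\eta^R([(\mathbb{X},\Delta),(\mathbb{X}',\Delta')])=[\eta^R(\mathbb{X},\Delta),\eta^R(\mathbb{X}',\Delta')]$. This is a lengthy but mechanical computation with sums over unshuffles, entirely analogous to the one behind Proposition \ref{Prop8}, and I expect the bookkeeping of Koszul signs coming from the contraction action to be the only delicate point. The key structural reason it works is that contraction is the graded adjoint of multiplication, so the composite of two contraction-type operators picks up an extra sign relative to the composite of two multiplication-type operators; this is precisely what forces the minus sign in front of $[\Delta,\Delta']$ in (\ref{19}), as opposed to the plus sign in front of $[\nabla,\nabla']$ in (\ref{30}). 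Tracking these signs through the Gerstenhaber-type formulas (\ref{12}), (\ref{13}), (\ref{15}) is where the care is needed.

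Finally, assuming $L$ projective and finitely generated, I would construct $(\eta^R)^{-1}$. Given a $\mathrm{Sym}_A(L,A)$-module derivation $\mathbb{D}=(D,\sigma_{\mathbb{D}})$ lowering tensor degree by $k-1$, I recover the multiderivation as $\mathbb{X}=\eta^{-1}(\sigma_{\mathbb{D}})$ using the isomorphism of Proposition \ref{Prop6}, and I recover $\Delta$ from the action of $D$ on $S_A^{k-1}L\otimes_A Q$: there the first summand vanishes (it would require feeding $k$ factors into $X$ out of only $k-1$), so $D(\xi_1\cdots\xi_{k-1}\otimes q)$ equals $\Delta(\xi_1,\ldots,\xi_{k-1}|q)$ up to an explicit Koszul sign. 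That $\eta^R$ and this construction are mutually inverse, and that the recovered pair genuinely satisfies the symbol condition (\ref{18}) and hence lies in $\mathcal{R}^k(Q)$, then follows as in Proposition \ref{Prop8}.
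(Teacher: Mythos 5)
Your proposal is correct and follows essentially the same route as the paper: the same Gerstenhaber-type formula for $D$ on decomposables (one summand feeding $k$ factors into $X$, one contracting $k-1$ factors into $\Delta$), the same appeal to a careful but mechanical computation for the bracket property, and the same inverse obtained via $\eta^{-1}(\sigma_{\mathbb{D}})$ together with evaluating $D$ on low-degree tensors (where the paper in fact uses no extra Koszul sign in recovering $\Delta$).
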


\begin{proof}
Let $(\mathbb{X},\Delta)\in\mathcal{R}^{k}(Q)$. Put $\eta^{R}(\mathbb{X}%
,\Delta):=(D,\eta(\mathbb{X}))$, with
\begin{align*}
D(\xi_{1}\cdots\xi_{\ell}\otimes q):=  &  \sum_{\tau\in S_{k, \ell-k}}%
\alpha(\tau,\boldsymbol{\xi})X(\xi_{\tau(1)},\ldots,\xi_{\tau(k)})\xi
_{\tau(k+1)}\cdots\xi_{\tau(\ell)}\otimes q\\
&  -\sum_{\sigma\in S_{\ell - k+1,k-1}}(-)^{\chi}\alpha(\sigma,\boldsymbol{\xi
})\xi_{\sigma(1)}\cdots\xi_{\sigma(\ell-k+1)}\otimes\Delta(\xi_{\sigma
(\ell- k +2)},\ldots,\xi_{\sigma(\ell)}|q)
\end{align*}
where I put $\Delta(\xi_{1},\ldots,\xi_{\ell}|q):=\Delta(\xi_{1},\ldots
,\xi_{\ell})(q)$, and $\chi=\mathbb{\bar{X}(}\bar{\xi}_{\sigma(1)}+\cdots
+\bar{\xi}_{\sigma(\ell - k+1)})$, $q\in Q$, $\xi_{1},\ldots,\xi_{\ell}\in L$. A
careful but straightforward computation shows that $\eta^{R}$ is a well
defined morphism of graded Lie algebras.

If $L$ is projective and finitely generated, $\eta$ is invertible and one can
define $(\eta^{R})^{-1}$ as follows. Let $\mathbb{D}=(D,\sigma_{\mathbb{D}})$
be a $\mathrm{Sym}_{A}(L,A)$-module derivation of $S_{A}^{\bullet}L\otimes
_{A}Q$ such that $D$ takes $Q\otimes_{A}S_{A}^{\ell}L$ to $Q\otimes_{A}%
S_{A}^{\ell-k}L$. Put
\[
(\eta^{R})^{-1}(\mathbb{D}):=(\eta^{-1}(\sigma_{\mathbb{D}}),\Delta_{D}),
\]
where
\[
\Delta_{D}(\xi_{1},\ldots,\xi_{k}|q):=D(\xi_{1}\cdots\xi_{k}\otimes q),
\]
$\xi_{1},\ldots,\xi_{k}\in L$, $q\in Q$.
\end{proof}

\begin{remark}
\label{Rem1}Both $\eta^{L}$ and $\eta^{R}$ are bijective when restricted to
elements with fixed first component $\mathbb{X}\in\mathrm{Der}_{A}^{\bullet}L$
in the domain, and derivations with symbol equal to $\eta(\mathbb{X})$ in the codomain.
\end{remark}

\begin{remark}
In the following, it will be useful to consider suitable \textquotedblleft
completions\textquotedblright\ of some of the graded spaces that appeared in
this section. Namely, put
\begin{align*}
\widehat{\mathrm{Der}}{}^{\bullet}A  &  :=%
{\textstyle\prod\nolimits_{k}}
\mathrm{Der}^{k}A,\\
\widehat{\mathrm{Der}}{}_{A}^{\bullet}L\equiv\widehat{\mathrm{Der}}{}%
^{\bullet}L  &  :=%
{\textstyle\prod\nolimits_{k}}
\mathrm{Der}^{k}L,
\end{align*}
and, similarly, put
\begin{align*}
\widehat{\mathcal{L}}(P)  &  :=%
{\textstyle\prod\nolimits_{k}}
\mathcal{L}^{k}(P),\\
\widehat{\mathcal{R}}(Q)  &  :=%
{\textstyle\prod\nolimits_{k}}
\mathcal{R}^{k}(Q).
\end{align*}
For instance, an element $\mathbb{X}$ in $\widehat{\mathrm{Der}}{}%
_{A}^{\bullet}L$ is a formal infinite sum%
\[
\mathbb{X}=\mathbb{X}_{0}+\mathbb{X}_{1}+\mathbb{X}_{2}+\cdots=\sum
_{k=0}^{\infty}\mathbb{X}_{k}%
\]
of $A$-module multiderivations, such that $\mathbb{X}_{k}$ has exactly
$k$-entries. Similarly, an element $(\mathbb{X},\nabla)$ in $\widehat
{\mathcal{L}}(P)$ (resp., $\widehat{\mathcal{R}}(Q)$) is a formal infinite
sum
\[
(\mathbb{X},\nabla)=(\mathbb{X}_{0},\nabla_{0})+(\mathbb{X}_{1},\nabla
_{1})+(\mathbb{X}_{2},\nabla_{2})+\cdots=\sum_{k=0}^{\infty}(\mathbb{X}%
_{k},\nabla_{k})
\]
of elements in $\mathcal{L}(P)$ (resp., $\mathcal{R}(Q)$), such that
$(\mathbb{X}_{k},\nabla_{k})$ has $k$ entries. All results in this section
extend trivially to the above completions. For instance, The Lie brackets of
$\mathrm{Der}_{A}^{\bullet}L$, $\mathcal{L}(P)$ and $\mathcal{R}(P)$, extend
to $\widehat{\mathrm{Der}}{}_{A}^{\bullet}L$, $\widehat{\mathcal{L}}(P)$,
$\widehat{\mathcal{R}}(P)$ respectively, and the morphism $\eta$ extends to a
bracket preserving map
\[
\eta:\widehat{\mathrm{Der}}{}_{A}^{\bullet}L\longrightarrow
\{\text{\textquotedblleft formal\textquotedblright\ derivations of
}\mathrm{Sym}_{A}(L,A)\},
\]
where, by a \emph{\textquotedblleft formal\textquotedblright\ derivation} $D$
of $\mathrm{Sym}_{A}(L,A)$, I mean a formal infinite sum
\[
D=D_{0}+D_{1}+D_{2}+\cdots=\sum_{k=0}^{\infty}D_{k}%
\]
of standard derivations such that $D_{k}$ maps $\ell$-forms to $(\ell
+k-1)$-forms. Morphisms $\eta^{L}$ and $\eta^{R}$ extend in a similar way. I
leave the obvious details to the reader. In the following, if there is no risk
of confusion, I will refer to elements of $\widehat{\mathrm{Der}}{}^{\bullet
}A$ and $\widehat{\mathrm{Der}}{}^{\bullet}L$ simply as multiderivations.
Similarly, I will refer to \textquotedblleft formal\textquotedblright%
\ derivations of $\mathrm{Sym}_{A}(L,A)$ simply as derivations. The careful
reader will find even more uninfluential abuses of notations analogous to
these ones scattered in the text. For the sake of readability, I will not
comment further on them.
\end{remark}

\section{SH Lie-Rinehart Algebras\label{SecSHLR}}

Let $A$ be an associative, graded commutative, unital $K$-algebra, $L$ an
$A$-module, and let $\mathbb{X}=\mathbb{X}_{0}+\mathbb{X}_{1}+\mathbb{X}%
_{2}+\cdots\in\widehat{\mathrm{Der}}{}^{\bullet}L$ be a multiderivation of $L$.

\begin{definition}
\label{DefSHLR}A \emph{SH LR algebra}, or an $LR_{\infty}[1]$\emph{
algebra},\emph{ }is a pair $(A,L)$, equipped with a degree $1$,
multiderivation $\mathbb{X}$ such that, $\mathbb{X}_{0}=0$ and the
\emph{higher Jacobiator} $\mathbb{J}(\mathbb{X}):=\tfrac{1}{2}[\mathbb{X}%
,\mathbb{X}]$ vanishes.
\end{definition}

\begin{example}
Let $V$ be a graded $K$-vector space. $LR_{\infty}[1]$ algebra structures on
$(K,V)$ are equivalent to $L_{\infty}[1]$ algebra structures on $V$.
\end{example}

\begin{example}
$L_{\infty}$ algebroids \cite{sss09,b11} {(see also \cite{sz11,bp12})} provide
examples of SH LR algebras. Indeed, an $L_{\infty}$ algebroid is a graded
vector bundle $\mathcal{E}$ over a \emph{non-graded} smooth manifold $M$,
equipped with a SH LR algebra structure on $(A,L):=(C^{\infty}(M),\Gamma
(\mathcal{E}))$. In particular, $A$ is non-graded. Accordingly, SH LR algebras
generalize $L_{\infty}$ algebroids in two directions: first allowing for more
general algebras $A$ and modules $L$ than algebras of smooth functions and
modules of smooth sections, and second allowing for graded algebras $A$.
\end{example}

Let $(A,L)$ be an $LR_{\infty}[1]$ algebra with \emph{structure
multiderivation} $\mathbb{X}$. The $k$-entry component of $X$ is the
$k$\emph{-th bracket}, and the $k$-entry component of $\sigma_{\mathbb{X}}$ is
the $k$\emph{-th anchor} of $(A,L)$. In terms of brackets and anchors, the
higher Jacobiator $\mathbb{J}(\mathbb{X})=(J(X),\sigma_{\mathbb{J}%
(\mathbb{X})})$ reads
\[
J(X)(\xi_{1},\ldots,\xi_{k})=\sum_{i+j=k}\sum_{\sigma\in S_{i,j}}\alpha
(\sigma,\boldsymbol{\xi})\,X(X(\xi_{\sigma(1)},\ldots,\xi_{\sigma(i)}%
),\xi_{\sigma(i+1)},\ldots,\xi_{\sigma(i+j)}),
\]
and
\begin{align*}
\sigma_{\mathbb{J}(\mathbb{X})}(\xi_{1},\ldots,\xi_{k-1}|a)=  &  \sum
_{i+j=k}\sum_{\sigma\in S_{i,j-1}}(-)^{\chi}\,\alpha(\sigma,\boldsymbol{\xi
})\sigma_{\mathbb{X}}(\xi_{\sigma(1)},\ldots,\xi_{\sigma(i)}|\sigma
_{\mathbb{X}}(\xi_{\sigma(i+1)},\ldots,\xi_{\sigma(i+j-1)}|p))\\
&  +\sum_{i+j=k}\sum_{\sigma\in S_{i,j-1}}\alpha(\sigma,\boldsymbol{\xi
})\,\sigma_{\mathbb{X}}(X(\xi_{\sigma(1)},\ldots,\xi_{\sigma(i)}),\xi
_{\sigma(i+1)},\ldots,\xi_{\sigma(i+j-1)}|p),
\end{align*}
where $\chi=\mathbb{\bar{X}}(\bar{\xi}_{\sigma(1)}+\cdots+\bar{\xi}%
_{\sigma(i)})$, $\boldsymbol{\xi}=(\xi_{1},\ldots,\xi_{k})\in L^{\times k}$,
$a\in A$, and, for simplicity, I omitted the subscript $_{k}$ in the $k$-entry
components of all $K$-multilinear maps. I will adopt the same notation below
when there is no risk of confusion. The above formulas show in particular that
$X$ is an $L_{\infty}[1]$ algebra structure on $L$, and $\sigma_{\mathbb{X}}$
is an $L_{\infty}[1]$ module structure on $A$.

\subsection{SH LR\ algebras, differential algebras, and SH Poisson
algebras\label{SecPoi}}

Let $(A,L)$ be an $LR_{\infty}[1]$ algebra with structure multiderivation
$\mathbb{X} = \mathbb{X}_{1} + \mathbb{X}_{2} + \cdots$. The morphism $\eta$
of Proposition \ref{Prop6} maps $\mathbb{X}$ to a degree $1$ (formal)
derivation $D=D_{1}+D_{2}+\cdots$ of $\mathrm{Sym}_{A}(L,A)$. {Notice that,
since $\mathbb{X}$ has no $0$ entry component, then $D$ has no component
$D_{0}$ mapping $k$-forms to $k$-forms}. Moreover, since $\eta$ preserves the
brackets, $D$ is a homological derivation which amounts to $\sum_{i+j=k}%
[D_{i},D_{j}]=0$, for all $k$. In terms of anchors and brackets $D_{k}$ is
given by the following \emph{higher Chevalley-Eilenberg formula }%
\cite{v12,h13}
\begin{align}
(D_{k}\omega)(\xi_{1},\ldots,\xi_{\ell+k-1}):=  &  \sum_{\sigma\in
S_{k-1,\ell}}(-)^{\chi}\alpha(\sigma,\boldsymbol{\xi})\sigma_{\mathbb{X}}%
(\xi_{\sigma(1)},\ldots,\xi_{\sigma(k-1)}\,|\,\omega(\xi_{\sigma(k)}%
,\ldots,\xi_{\sigma(\ell+k-1)}))\nonumber\\
&  -\sum_{\tau\in S_{k,\ell-1}}(-)^{\omega}\alpha(\tau,\boldsymbol{\xi}%
)\omega(X(\xi_{\tau(1)},\ldots,\xi_{\tau(k)}),\xi_{\tau(k+1)},\ldots,\xi
_{\tau(\ell+k-1)}), \label{CED}%
\end{align}
where $\bar{\chi}=\bar{\omega}(\bar{\xi}_{\sigma(1)}+\cdots+\bar{\xi}%
_{\sigma(k-1)})$, $\omega$ is an $\ell$-form, and $\xi_{1},\ldots,\xi
_{\ell+k-1}\in L$. In view of Proposition \ref{Prop6}, if $L$ is projective
and finitely generated, then an $LR_{\infty}[1]$ algebra structure on $(A,L)$
is equivalent to a formal homological derivation $D$ of $\mathrm{Sym}%
_{A}(L,A)$ such that $D_{0}=0$.

\begin{definition}
The pair $(\mathrm{Sym}_{A}(L,A),D)$ is the \emph{Chevalley-Eilenberg DG
algebra} of $(A,L)$ and it is denoted by $\mathbf{CE}(A,L)$.
\end{definition}

Notice that the projection
\begin{equation}
\mathbf{CE}(A,L)\longrightarrow(A,\sigma_{\mathbb{X}_{1}}) \label{3}%
\end{equation}
is a morphism of DG algebras.

\begin{example}
Let $V$ be an $L_{\infty}[1]$ algebra. Then $(K,V)$ is an $LR_{\infty}[1]$
algebra and $\mathbf{CE}(K,V)$ is nothing but the Chevalley-Eilenberg algebra
of $V$.
\end{example}

\begin{definition}
\label{DefP}A $P_{\infty}[1]$ algebra is an associative, graded commutative,
unital algebra $\mathscr{P}$ equipped with a degree $1$ multiderivation
$\Lambda\in\widehat{\mathrm{Der}}{}^{\bullet}\mathscr{P}$ such that
$\Lambda_{0}=0$ and $J(\Lambda):=\frac{1}{2}[\Lambda,\Lambda]=0$.
\end{definition}

In other words a $P_{\infty}[1]$ algebra structure on $\mathscr{P}$ ($P$ for
\textquotedblleft Poisson\textquotedblright) is an $L_{\infty}[1]$ algebra
structure such that the brackets are multiderivations. Thus, $P_{\infty}[1]$
algebras are homotopy versions of Poisson agebras {(with an additional shift
in degree)}.

\begin{remark}
\label{RemCF}Definition \ref{DefP} differs slightly from an analogous
definition in \cite{cf07}. Namely, in \cite{cf07}, Cattaneo and Felder define
$P_{\infty}$ \emph{algebras} as associative, graded commutative, unital
algebras with an additional $L_{\infty}$ algebra structure (beware,
$L_{\infty}$ not $L_{\infty}[1]$) whose structure maps are graded
skew-symmetric multiderivations.
\end{remark}

\begin{example}
Let $G$ be an associative, graded commutative, unital algebra equipped with a
Gerstenhaber bracket $[{}\cdot{},{}\cdot{}]$. Put $\Lambda_{2}(g,g^{\prime
}):=(-)^{g}[g,g^{\prime}]$. Then $(G,\Lambda=\Lambda_{2})$ is a $P_{\infty
}[1]$ algebra with $\Lambda_{1}=\Lambda_{3}=\cdots=0$.
\end{example}

Let $(A,L)$ be an $LR_{\infty}[1]$ algebra with structure multiderivation
$\mathbb{X}$. The morphism $\nu$ of Proposition \ref{Prop7} maps $\mathbb{X}$
to a degree $1$ derivation $\Lambda=\Lambda_{1}+\Lambda_{2}+\cdots$ of
$S_{A}^{\bullet}L$. Since $\nu$ preserves the brackets, then $[\Lambda
,\Lambda]=0$. In view of Proposition \ref{Prop7}, an $LR_{\infty}[1]$ algebra
structure on $(A,L)$ is actually equivalent to a $P_{\infty}[1]$ algebra
structure $\Lambda$ on $S_{A}^{\bullet}L$ such that $\Lambda_{0}=0$ and
$\Lambda_{k}$ takes $S_{A}^{\ell_{1}}L\times\cdots\times S_{A}^{\ell_{k}}L$ to
$S_{A}^{\ell_{1}+\cdots+\ell_{k}+k-1}L$.

\begin{example}
Let $A$ be the graded algebra of smooth functions on a graded manifold
$\mathcal{M}$ and $L$ be the module of sections of a graded vector bundle
$\mathcal{E}$ over $\mathcal{M}$. Then $L$ is projective and finitely
generated. Moreover, $S_{A}^{\bullet}L$ identifies with the algebra of
fiber-wise polynomial functions on the dual bundle $\mathcal{E}^{\ast}$, and
symmetric forms on $L$ identify with fiber-wise polynomial functions on
$\mathcal{E}$. In their turn, symmetric multiderivations of $S_{A}^{\bullet}L$
identify with (homogenous, fiber-wise polynomial functions) on $T^{\ast
}\mathcal{E}^{\ast}$. Denote by $\{{}\cdot{},{}\cdot{}\}_{\mathcal{E}^{\ast}}$
the canonical Poisson bracket on $C^{\infty}(T^{\ast}\mathcal{E}^{\ast})$.
Finally recall that there is a canonical (Tulczyjew-type \cite{t74})
isomorphism (of double vector bundles over $\mathcal{M}$) $T^{\ast}%
\mathcal{E}^{\ast}\simeq T^{\ast}\mathcal{E}$. An $LR_{\infty}[1]$ algebra
structure in $(A,L)$ is then the same as (see \cite{b11} for the case when
$\mathcal{M}$ is a non-graded manifold, see also the appendix of \cite{op05}),

\begin{enumerate}
\item a degree $1$ function $S$ on $T^{\ast}\mathcal{E}^{\ast}$, such that
$\{{}S{},{}S{}\}_{\mathcal{E}^{\ast}}=0$, $S$ is fiber-wise linear with
respect to projection $T^{\ast}\mathcal{E}^{\ast}\longrightarrow\mathcal{E}$
and vanishes on the graph of the zero section of $T^{\ast}\mathcal{E}^{\ast
}\longrightarrow\mathcal{E}^{\ast}$,

\item a homological vector field on $\mathcal{E}$ tangent to the zero section.
\end{enumerate}

I will name $L_{\infty}[1]$\emph{ algebroid with graded base} any graded
vector bundle $\mathcal{E}$ over a graded base manifold $\mathcal{M}$ with a
homological vector field tangent to the zero section. Let $\mathcal{E}%
\longrightarrow\mathcal{M}$ be an $L_{\infty}[1]$ algebroid with graded base.
Then, in particular, both $\mathcal{E}$ and $\mathcal{M}$ are $Q$-manifolds
(see, for instance, \cite{m06,cs11}), and the zero section is a morphism of
$Q$-manifolds (however, beware that $\mathcal{E}\longrightarrow\mathcal{M}$ is
\emph{not}, in general, a $Q$-bundle in the sense of \cite{ks07}). The
\textquotedblleft transformation $L_{\infty}$ algebroids\textquotedblright\ of
Mehta and Zambon (which are associated to the action of an $L_{\infty}$
algebra on a graded manifold, see Remark 4.5 in \cite{mz12}) are examples of
the $L_{\infty}[1]$ algebroids with graded base defined here. Actually the
former can be generalized to \emph{transformation }$L_{\infty}$
\emph{algebroids} associated to the \emph{action of an }$L_{\infty}[1]$
\emph{algebroid (with graded base)} on a graded fibered manifold (see Example
\ref{ExampALA} in Section \ref{SecAct}).
\end{example}

\begin{example}
[Part I]{\label{ExPLR} Let $P$ be a Poisson algebra and $\Omega^{1}(P)$ the
$P$-module of K\"{a}hler differentials. Recall that $(P,\Omega^{1}(P))$ is a
Lie-Rinehart algebra in a natural way (see, for instance, \cite{h90}).
Similarly, a pair $(\mathscr{P},\Omega^{1}(\mathscr{P}))$, where $\mathscr{P}$
is a $P_{\infty}$ algebra and $\Omega^{1}(\mathscr{P})$ is the $\mathscr{P}$%
-module of \emph{graded K\"{a}hler differentials}, is an $LR_{\infty}[1]$
algebra. Recall that if $A$ is an associative, graded commutative, unital
algebra, the $A$-module of graded K\"{a}hler differentials over $A$ is defined
as the quotient
\[
\Omega^{1}(A):=A\otimes_{K}A[-1]/Q,
\]
where $Q\subset A\otimes_{K}A[-1]$ is the (graded) submodule generated by
elements of the form
\[
1\otimes ab-(-)^{a}a\otimes b-(-)^{(a+1)b}b\otimes a,\quad a,b\in A.
\]
Here $A\otimes_{K}A[-1]$ is equipped with the $A$-module structure inherited
from the first factor. Then, there is a canonical, degree $1$ operator, the
\emph{universal derivation }$d:A\longrightarrow\Omega^{1}(A)$, given by
$a\longmapsto(1\otimes a)+Q$. Clearly, $\Omega^{1}(A)$ is generated by the
image of $d$. Now, let $\mathscr{P}$ be a $P_{\infty}$ algebra (beware
$P_{\infty}$, not $P_{\infty}[1]$) with structure maps $\Lambda_{k}%
:\mathscr{P}^{\times k}\longrightarrow\mathscr{P}$ (see Remark \ref{RemCF}).
Actually, to be consistent with conventions in this paper, I need a slight
modification of Cattaneo-Felder definition. Namely, I assume that the
structure maps $\Lambda_{k}$ have degree $k-2$ (instead of $2-k$). It is easy
to see that there is a unique $LR_{\infty}[1]$ algebra structure $\mathbb{X}$
on $(\mathscr{P},\Omega^{1}(\mathscr{P}))$ such that
\begin{align}
X(df_{1},\ldots,df_{k})  &  =(-)^{\chi}d\Lambda_{k}(f_{1},\ldots
,f_{k})\label{32}\\
\sigma_{\mathbb{X}}(df_{1},\ldots,df_{k-1}|f_{k})  &  =(-)^{\chi^{\prime}%
}\Lambda_{k}(f_{1},\ldots,f_{k-1},f_{k}), \label{33}%
\end{align}
where $\chi=(k-1)\bar{f}_{1}+(k-2)\bar{f}_{2}+\cdots+\bar{f}_{k-1}$,
$\chi^{\prime}=\chi-\bar{f}_{1}-\cdots-\bar{f}_{k-1}$, $f_{1},\ldots,f_{k}%
\in\mathscr{P}$.}

{Notice that, if $\mathscr{P}$ is the algebra of smooth functions on a
\emph{graded manifold }$\mathcal{M}$, then a $P_{\infty}$ algebra structure on
$\mathscr{P}$ is the same as a degree $-2$, skew-symmetric multivector field
$H$ on $\mathcal{M}$, i.e., a degree $2$, fiber-wise polynomial function on
$T^{\ast}[1]\mathcal{M}$, such that $[H,H]_{\mathrm{sn}}=0$, where $[{}\cdot
{},{}\cdot{}]_{\mathrm{sn}}$ is the Schouten-Nijenhuis bracket on graded
multivector fields (see, for instance, \cite{b11}). In this case,
K\"{a}hler differentials on $P$ does not coincide, in general, with
differential $1$-forms on $\mathcal{M}$. However, formulas (\ref{32}) and
(\ref{33}) still define an $LR_{\infty}[1]$ algebra structure on
$(\mathscr{P},\Omega^{1}(\mathcal{M}))$.}
\end{example}

\begin{example}
[Part I]{\label{ExJac} Let }$C${ be a Jacobi algebra, i.e., an associative,
commutative, unital algebra equipped with a Lie algebra structure }$\{{}%
\cdot{},{}\cdot{}\}$ {which is a differential operator of order }$1$, i.e.,
(in this simple case) an $A$-module derivation, in each argument. Moreover,
let{ $J^{1}C$ be the $C$-module of }$1$-jets of $C$ (see, for instance,
\cite{k97}){. The pair $(C,J^{1}(C))$ is a Lie-Rinehart algebra in a natural
way (see, for instance, \cite{v00} for a smooth version of this statement).
}It is natural to define a Jacobi analogue of a $P_{\infty}$ algebra as
follows. A \emph{$J_{\infty}$ algebra} is an associative, graded commutative, unital
algebra $\mathscr{C}$ equipped with an additional $L_\infty$ algebra structure whose structure maps are $A$-module derivations in each argument. {Then a pair $(\mathscr{C},J^{1}\mathscr{C})$, where
$\mathscr{C}$ is a $J_{\infty}$ algebra and $J^{1}\mathscr{C}$ is the
$\mathscr{C}$-module of \emph{graded 1-jets}} of {$\mathscr{C}$, is an
$LR_{\infty}[1]$ algebra. To see this, first recall that if $A$ is an
associative, graded commutative, unital algebra, the $A$-module of }$1$-jets
of ${A}${ is defined as the quotient
\[
J^{1}A:=A\otimes_{K}A[-1]/Q,
\]
where $Q\subset A\otimes_{K}A[-1]$ is the (graded) submodule generated by
elements of the form
\[
1\otimes ab-(-)^{a}a\otimes b-(-)^{(a+1)b}b\otimes a+ab\otimes1,\quad a,b\in
A.
\]
Here $A\otimes_{K}A[-1]$ is equipped with the $A$-module structure inherited
from the first factor. Then, there is a canonical, degree $1$ operator, the
\emph{universal first order differential operator }$j^{1}:A\longrightarrow
J^{1}A$, given by $a\longmapsto(1\otimes a)+Q$. Clearly, $J^{1}A$ is generated
by the image of $j^{1}$. Now, let $\mathscr{C}$ be a $J_{\infty}$ algebra with
structure maps }${\Phi}${$_{k}:\mathscr{C}^{\times k}\longrightarrow
\mathscr{C}$. To be consistent with conventions in this paper, I need the
}$\Phi_{k}$'s to be of degree $k-2${. It is easy to see that there is a unique
$LR_{\infty}[1]$ algebra structure $\mathbb{X}$ on $(\mathscr{C},J^{1}%
\mathscr{C})$ such that
\begin{align}
X(j^{1}f_{1},\ldots,j^{1}f_{k})  &  =(-)^{\chi}j^1\Phi_{k}(f_{1},\ldots,f_{k})\\
\sigma_{\mathbb{X}}(j^{1}f_{1},\ldots,j^{1}f_{k-1}|f_{k})  &  =(-)^{\chi
^{\prime}}(\Phi_{k}(f_{1},\ldots,f_{k-1},f_{k})-\Phi_{k}(f_{1},\ldots
,f_{k-1},1)f_{k}),
\end{align}
where $\chi=(k-1)\bar{f}_{1}+(k-2)\bar{f}_{2}+\cdots+\bar{f}_{k-1}$,
$\chi^{\prime}=\chi-\bar{f}_{1}-\cdots-\bar{f}_{k-1}$, $f_{1},\ldots,f_{k}%
\in\mathscr{C}$.}

Notice that $\mathrm{Sym}_{A}(J^{1}\mathscr{C},\mathscr{C})\simeq
\mathrm{Der}_{\mathscr{C}}^{\bullet}\mathscr{C}$ and the differential induced
by {$\mathbb{X}$} is nothing but $[\Phi,{}\cdot{}]$.
\end{example}

\section{Left SH LR Connections\label{SecLeftConn}}

Left connections along SH LR algebras generalize simultaneously: connections
along LR algebras to the homotopy setting, and representations of $L_{\infty}$
algebras to the LR setting.

Let $(A,L)$ be an $LR_{\infty}[1]$ algebra with structure multiderivation
$\mathbb{X}$, and $P$ an $A$-module.

\begin{definition}
\label{DefLConn}A \emph{left }$(A,L)$\emph{-connection} \emph{in }$P$ is a
degree $1$, $\mathrm{Der}^{L}P$-valued form $\nabla$, such that $(\mathbb{X}%
,\nabla)\in\widehat{\mathcal{L}}(P)$. The $\mathrm{Der}^{L}P$-valued form
$J(\nabla):=\nabla\circ X+\frac{1}{2}[\nabla,\nabla]$ is the \emph{curvature
of }$\nabla$. A left $(A,L)$-connection is \emph{flat} if the curvature
vanishes identically. An $A$-module with a flat left $(A,L)$-connection is a
\emph{left }$(A,L)$\emph{-module}.
\end{definition}

\begin{example}
Let $V$ be an $L_{\infty}[1]$ algebra. Then $(K,V)$ is an $LR_{\infty}[1]$
algebra and left $(K,V)$-modules are just $L_{\infty}[1]$ modules over $V$.
\end{example}

\begin{remark}
\label{RemLCurv}The curvature $J(\nabla)$ of a left $(A,L)$-connection is the
second component of the commutator
\[
\tfrac{1}{2}[(\mathbb{X},\nabla),(\mathbb{X},\nabla)]=(\mathbb{J}%
(\mathbb{X}),J(\nabla)),
\]
whose first component vanishes identically. Accordingly, the symbol of
$J(\nabla)(\xi_{1},\ldots,\xi_{k-1})$ vanishes identically, for all $\xi
_{1},\ldots,\xi_{k-1}\in L$, $k\in\mathbb{N}$, i.e., $J(\nabla)$ takes values
in $\mathrm{End}_{A}P$. Moreover, it follows from the Jacobi identity for the
Lie bracket in $\widehat{\mathcal{L}}(P)$ that $[(\mathbb{X},\nabla
),[(\mathbb{X},\nabla),(\mathbb{X},\nabla)]]=0$, i.e.,
\begin{equation}
\lbrack\nabla,J(\nabla)]-J(\nabla)\circ X=0, \label{LBianchi}%
\end{equation}
which is a higher version of the \emph{Bianchi identity}.
\end{remark}

In terms of the components of $\nabla$, the curvature is given by formulas%
\begin{align}
J(\nabla)(\xi_{1},\ldots,\xi_{k-1}|p):=  &  \sum_{i+j=k}\sum_{\sigma\in
S_{i,j-1}}\alpha(\sigma,\boldsymbol{\xi})\,\nabla(X(\xi_{\sigma(1)},\ldots
,\xi_{\sigma(i)}),\xi_{\sigma(i+1)},\ldots,\xi_{\sigma(i+j-1)}|p)\nonumber\\
&  +\sum_{i+j=k}\sum_{\sigma\in S_{i,j-1}}(-)^{\chi}\,\alpha(\sigma
,\boldsymbol{\xi})\nabla(\xi_{\sigma(1)},\ldots,\xi_{\sigma(i)}|\nabla
(\xi_{\sigma(i+1)},\ldots,\xi_{\sigma(i+j-1)}|p)). \label{LeftCurv}%
\end{align}
where $\chi=\bar{\xi}_{\sigma(1)}+\cdots+\bar{\xi}_{\sigma(i)}$, and
$\boldsymbol{\xi}=(\xi_{1},\ldots,\xi_{k-1})\in L^{\times(k-1)}$, $p\in P$.

\begin{remark}
{Connections along $LR_{\infty}[1]$ algebras as in Definition \ref{DefLConn}
should not be confused with Crainic's \emph{connections up to homotopy}
\cite{c00}. The former are (multi)linear, while the latter are linear only
\textquotedblleft up to homotopy\textquotedblright.}
\end{remark}

The morphism $\eta^{L}$ of Proposition \ref{Prop8} maps $(\mathbb{X},\nabla)$
to a degree $1$, $\mathrm{Sym}_{A}(L,A)$-module derivation $D^{\nabla}%
=D_{1}^{\nabla}+D_{2}^{\nabla}+\cdots$ of $\mathrm{Sym}_{A}(L,P)$ with symbol
$D$. In terms of anchors and brackets, $D_{k}^{\nabla}$ is given by the
following formula%
\begin{align}
(D_{k}^{\nabla}\Omega)(\xi_{1},\ldots,\xi_{\ell+k-1}):=  &  \sum_{\sigma\in
S_{k-1,\ell}}(-)^{\chi^{\prime}}\alpha(\sigma,\boldsymbol{\xi})\nabla
(\xi_{\sigma(1)},\ldots,\xi_{\sigma(k-1)}\,|\,\Omega(\xi_{\sigma(k)}%
,\ldots,\xi_{\sigma(\ell+k-1)}))\nonumber\\
&  -\sum_{\tau\in S_{k,\ell-1}}(-)^{\Omega}\alpha(\tau,\boldsymbol{\xi}%
)\Omega(X(\xi_{\tau(1)},\ldots,\xi_{\tau(k)}),\xi_{\tau(k+1)},\ldots,\xi
_{\tau(\ell+k-1)}), \label{CEDM}%
\end{align}
where $\chi^{\prime}=\bar{\Omega}(\bar{\xi}_{\sigma(1)}+\cdots+\bar{\xi
}_{\sigma(k-1)})$, $\Omega$ is a $P$-valued $\ell$-form, and $\xi_{1}%
,\ldots,\xi_{\ell+k-1}\in L$. In view of Remark \ref{Rem1}, the left
$(A,L)$-connection $\nabla$ in $P$ is actually equivalent to $D^{\nabla}$ and
it is flat iff $\mathcal{J}^{\nabla}:=\frac{1}{2}[D^{\nabla},D^{\nabla}]=0$.
Notice that the symbol of $\mathcal{J}^{\nabla}$ vanishes identically, i.e.,
$\mathcal{J}^{\nabla}$ is a degree $2$, $\mathrm{Sym}_{A}(L,A)$-linear
endomorphism of $\mathrm{Sym}_{A}(L,P)$. In terms of the components of the
curvature, $\mathcal{J}^{\nabla}=\mathcal{J}_{1}^{\nabla}+\mathcal{J}%
_{2}^{\nabla}+\cdots$ is given by formulas
\[
\mathcal{J}_{k}^{\nabla}(\Omega)(\xi_{1},\ldots,\xi_{\ell+k-1}):=\sum
_{\sigma\in S_{k-1,\ell}}(-)^{\chi^{\prime\prime}}\alpha(\sigma;\boldsymbol{\xi
})J(\nabla)(\xi_{\sigma(1)},\ldots,\xi_{\sigma(k-1)}|\Omega(\xi_{\sigma
(k)},\ldots,\xi_{\sigma(\ell+k-1)})),
\]
where $\chi^{\prime\prime}=\bar{\Omega}(\bar{\xi}_{\sigma(1)}+\cdots+\bar{\xi
}_{\sigma(k-1)})$, $\Omega$ is a $P$-valued $\ell$-form, and $\xi_{1}%
,\ldots,\xi_{\ell+k-1}\in L$.

Let $P$ be a left $(A,L)$-module with structure (flat) left connection
$\nabla$.

\begin{definition}
The pair $(\mathrm{Sym}_{A}(L,P),D^{\nabla})$ is the \emph{Chevalley-Eilenberg
DG module} of $P$, and it is denoted by $\mathbf{CE}(P)$.
\end{definition}

Notice that the projection
\[
\mathbf{CE}(P)\longrightarrow(P,\nabla_{1})
\]
is a morphism of DG modules over (\ref{3}).

\begin{example}
\label{ExRepHom}Let $\mathcal{E}$ be a \emph{non-graded} Lie algebroid over a
\emph{non-graded} smooth manifold $M$, and $\mathcal{Y}$ a \emph{graded
}vector bundle over $M$. Denote by $\Omega(\mathcal{E})$ the graded algebra of
sections of the exterior bundle $\Lambda^{\bullet}\mathcal{E}^{\ast}$, and by
$\Omega(\mathcal{E},\mathcal{Y})$ the $\Omega(\mathcal{E})$-module of sections
of the vector bundle $\Lambda^{\bullet}\mathcal{E}^{\ast}\otimes
_{M}\mathcal{Y}$. The pair $(C^{\infty}(M),\Gamma(\mathcal{E}))$ is a
\emph{non-graded} Lie-Rinehart algebra and $(A,L):=(C^{\infty}(M),\Gamma
(E)[1])$ is an $LR_{\infty}[1]$ algebra whose structure multiderivation has
only a two entry component. Representations up to homotopy of $E$ \cite{ac11}
provide examples of left $(A,L)$-modules. Recall that a representation up to
homotopy (in the sense of Definition 3.1 in \cite{ac11}) is a graded vector bundle
$\mathcal{Y}$ equipped with a degree $1$, $\Omega(E)$-module homological
derivation of $\Omega(E,\mathcal{Y})$ subordinate to the Chevalley-Eilenberg
differential in $\Omega(E)$. Clearly, $\Omega(E)\simeq\mathrm{Sym}_{A}(L,A)$
and $\Omega(E,\mathcal{Y})\simeq\mathrm{Sym}_{A}(L,\Gamma(\mathcal{Y}))$.
Moreover $L$ is a projective and finitely generated $A$-module. Therefore, a
representation up to homotopy is equivalent to a left $(A,L)$-module. It turns
out that a good definition of adjoint representation of a Lie algebroid can be
given in the setting of representations up to homotopy. As for Lie algebras,
cohomologies of the adjoint representation control deformations of the Lie
algebroid \cite{cm08}. It would be interesting to explore the problem of
finding a good definition of adjoint representation of an $L_{\infty}$
algebroid (over a possibly graded manifold) or even of adjoint representation
of an SH LR algebra.
\end{example}

\subsection{Operations with Left Connections\label{SecOper}}

Let $(A,L)$ be an $LR_{\infty}[1]$ algebra. Standard connections induce
connections on tensor products and homomorphisms. The same is true for
$(A,L)$-connections. Namely, let $P$ and $P^{\prime}$ be $A$-modules with left
$(A,L)$-connections $\nabla$ and $\nabla^{\prime}$, respectively. It is easy
to see that formulas
\[
\nabla^{\otimes}(\xi_{1},\ldots,\xi_{k-1}|p\otimes p^{\prime}):=\nabla(\xi
_{1},\ldots,\xi_{k-1}|p)\otimes p^{\prime}+(-)^{\chi+p}p\otimes\nabla^{\prime
}(\xi_{1},\ldots,\xi_{k-1}|p^{\prime}),
\]
where $\chi=(\bar{\xi}_{1}+\cdots+\bar{\xi}_{k-1})\bar{p}$, define a left
$(A,L)$-connection $\nabla^{\otimes}$ in $P\otimes_{A}P^{\prime}$. A
straightforward computation shows that the curvature $J(\nabla^{\otimes})$ is
given by formulas
\[
J(\nabla^{\otimes})(\xi_{1},\ldots,\xi_{k-1}|p\otimes p^{\prime}%
)=J(\nabla)(\xi_{1},\ldots,\xi_{k-1}|p)\otimes p^{\prime}+(-)^{\chi}p\otimes
J(\nabla^{\prime})(\xi_{1},\ldots,\xi_{k-1}|p^{\prime}).
\]
In particular, if $\nabla$ and $\nabla^{\prime}$ are flat, then $\nabla
^{\otimes}$ is flat as well.

Similarly, formulas
\[
\nabla^{\mathrm{Hom}}(\xi_{1},\ldots,\xi_{k-1}|\varphi)(p):=\nabla^{\prime
}(\xi_{1},\ldots,\xi_{k-1}|\varphi(p))-(-)^{\chi^{\prime}+\varphi}%
\varphi(\nabla(\xi_{1},\ldots,\xi_{k-1}|p)),
\]
where $\chi^{\prime}=(\bar{\xi}_{1}+\cdots+\bar{\xi}_{k-1})\bar{\varphi}$,
define a left $(A,L)$-connection $\nabla^{\mathrm{Hom}}$ in $\mathrm{Hom}%
_{A}(P,P^{\prime})$. A straightforward computation shows that the curvature
$J(\nabla^{\mathrm{Hom}})$ is given by formulas
\[
J(\nabla^{\mathrm{Hom}})(\xi_{1},\ldots,\xi_{k-1}|\varphi)(p):=J(\nabla
)(\xi_{1},\ldots,\xi_{k-1}|\varphi(p))-(-)^{\chi^{\prime}}\varphi
(J(\nabla^{\prime})(\xi_{1},\ldots,\xi_{k-1}|p)).
\]
In particular, if $\nabla$ and $\nabla^{\prime}$ are flat, then $\nabla
^{\mathrm{Hom}}$ is flat as well.

\begin{remark}
Recall that the connections $\nabla$ and $\nabla^{\prime}$ determine
$\mathrm{Sym}_{A}(L,A)$-module derivations $D^{\nabla}$, and $D^{\nabla
^{\prime}}$ on $\mathrm{Sym}_{A}(L,P)$, and $\mathrm{Sym}_{A}(L,P^{\prime})$
respectively, both with the same symbol $D$. Therefore they also determine a
$\mathrm{Sym}_{A}(L,A)$-module derivation in the tensor product
\[
T:=\mathrm{Sym}_{A}(L,P)\underset{\mathrm{Sym}_{A}(L,A)}{\otimes}%
\mathrm{Sym}_{A}(L,P^{\prime}),
\]
with symbol equal to $D$. If $L$ is projective and finitely generated, then
$T\simeq\mathrm{Sym}_{A}(L,P\otimes P^{\prime})$ and a derivation of $T$ (with
symbol $D$) is the same as a left connection in $P\otimes P^{\prime}$. It is
easy to see that the connection in $P\otimes P^{\prime}$ obtained in this way
is precisely $\nabla^{\otimes}$.
\end{remark}

\begin{remark}
Let $P$ be an $A$-module with a left $(A,L)$-connection $\nabla$. There is an
induced left $(A,L)$-connection $\nabla^{\mathrm{End}}$ in $\mathrm{End}_{A}%
P$. In its turn, $\nabla^{\mathrm{End}}$ determines a derivation
$D^{\nabla^{\mathrm{End}}}$ of $\mathrm{End}_{A}P$-valued forms. On the other
hand, in view of Remark \ref{RemLCurv}, the curvature $J(\nabla)$ of $\nabla$
is an $\mathrm{End}_{A}P$-valued form and
\[
D^{\nabla^{\mathrm{End}}}J(\nabla)=\nabla^{\mathrm{End}}\circ J(\nabla
)-J(\nabla)\circ X=[\nabla,J(\nabla)]-J(\nabla)\circ X=0,
\]
where I used the Bianchi identity \ref{LBianchi}.
\end{remark}

\subsection{Actions of SH LR Algebras\label{SecAct}}

A Lie algebra may act on a manifold. The action is then encoded by a
\emph{transformation }(or, \emph{action}) \emph{Lie algebroid}.\emph{ }More
generally, a Lie algebroid may act on a fibered manifolds over the same base
manifold. The action is again encoded by a transformation Lie algebroid
\cite{hm90}. The algebraic counterpart of a fibered manifold is an algebra
extension. Accordingly, LR algebras may act on algebra extensions, and the
action is encoded by a new LR algebra. On the other hand, $L_{\infty}$
algebras may act on graded manifolds \cite{mz12}. In this section, I show how
to generalize simultaneously actions of Lie algebroids (on fibered manifolds)
and actions of $L_{\infty}$ algebras (on graded manifolds) to \emph{actions of
SH LR algebras on graded algebra extensions}. I also describe the analogue of
the transformation Lie algebroid in this context.

Let $(A,L)$ be an $LR_{\infty}[1]$ algebra with structure multiderivation
$\mathbb{X}$ and $\mathcal{A}$ an associative, graded commutative, unital
$A$-algebra, i.e., a graded algebra extension $A\subset\mathcal{A}$. In
particular, $\mathcal{A}$ is an $A$-module. Of special interest are left
$(A,L)$-connections $\nabla$ in $\mathcal{A}$, which take values in
\emph{derivations }of $\mathcal{A}$ (recall that a general connection takes
values in $A$-module derivations of $\mathcal{A}$, which are more general
operators than algebra derivations).

\begin{definition}
\label{DefAct}A \emph{pre-action} of the $LR_{\infty}[1]$ algebra $(A,L)$ on
$\mathcal{A}$ is a left $(A,L)$-connection $\nabla$ on $\mathcal{A}$ which
takes values in algebra derivations of $\mathcal{A}$, i.e., for all $\xi
_{1},\ldots,\xi_{k-1}\in L$, and $f,g\in\mathcal{A}$
\[
\nabla_{k}(\xi_{1},\ldots,\xi_{k-1}|{}fg{})=\nabla_{k}(\xi_{1},\ldots
,\xi_{k-1}|{}f{})\,g+(-)^{gf}\nabla_{k}(\xi_{1},\ldots,\xi_{k-1}|{}g{})\,f.
\]
A flat pre-action is an \emph{action}.
\end{definition}

\begin{remark}
An associative, graded commutative, unital $A$-algebra with an action $\nabla$
of $(A,L)$ is, in particular, a DG algebra (just forget about all the
structure maps $\nabla_{k}$ except the first one).
\end{remark}

\begin{example}
Let $\nabla$ be a left $(A,L)$-connection in a left $A$-module $P$. The
induced $(A,L)$-connections in $S_{A}^{\bullet}P$ and $\mathrm{Sym}_{A}(P,A)$
are obviously pre-actions, and they are actions if $\nabla$ is flat (see
Section \ref{SecOper}). Pre-actions on $S_{A}^{\bullet}P$ induced by left
$(A,L)$-connections in $P$ are characterized by the condition that the
derivations $\nabla(\xi_{1},\ldots,\xi_{k-1})$ restrict to $P$. Similarly, if
$P$ is a projective and finitely generated $A$-module, pre-actions on
$\mathrm{Sym}_{A}(P,A)$ induced by left $(A,L)$-connections in $P$ are
characterized by the condition that the derivations $\nabla(\xi_{1},\ldots
,\xi_{k-1})$ restrict to $P^{\ast}:=\mathrm{Hom}_{A}(P,A)$. Indeed, under the
regularity hypothesis on $P$, one has $\mathrm{Sym}_{A}(P,A)\simeq
S_{A}^{\bullet}P^{\ast}$ and, since $P\simeq P^{\ast\ast}$, left
$(A,L)$-connections in $P$ are equivalent to left $(A,L)$-connections in
$P^{\ast}$.
\end{example}

{The following example shows that the notion of left connection along a SH LR
algebra captures both \emph{representations up to homotopy of Lie algebroids}
(see Example \ref{ExRepHom}) and \emph{actions of $L_{\infty}$ algebras on
graded manifolds} in the sense of \cite{mz12}.}

\begin{example}
Let $A$ be the graded algebra of smooth functions on a graded manifold
$\mathcal{M}$ and $L$ be the module of sections of a graded vector bundle
$\mathcal{E}$ over $\mathcal{M}$. Moreover, let $\mathcal{A}$ be the
$A$-algebra of smooth functions on a graded manifold $\mathcal{F}$ which is
fibered over $\mathcal{M}$. As already remarked, an $LR_{\infty}[1]$ algebra
structure in $L$ is the same as a homological vector field $D$ on
$\mathcal{E}$ tangent to the zero section. Moreover, the $\mathrm{Sym}%
_{A}(L,A)$-algebra $\mathrm{Sym}_{A}(L,\mathcal{A})$ identifies with the
module of fiber-wise polynomial functions on the total space $\mathcal{E}%
\times_{\mathcal{M}}\mathcal{F}$ of the induced bundle $\pi:\mathcal{E}%
\times_{\mathcal{M}}\mathcal{F}\longrightarrow{}\mathcal{E}$. In their turn,
derivations of $\mathrm{Sym}_{A}(L,\mathcal{A})$ identify with vector fields
on $\mathcal{E}\times_{\mathcal{M}}\mathcal{F}$. Therefore, a pre-action
$\nabla$ of $(A,L)$ on $\mathcal{A}$ is the same as a vector field $D^{\nabla
}$ on $\mathcal{E}\times_{\mathcal{M}}\mathcal{F}$ which is $\pi$-related to
$D$. Moreover, $\nabla$ is an action iff $D^{\nabla}$ is a homological vector
field. Now, let $\mathcal{M}=\{\ast\}$ be a point. Then $A=\mathbb{R}$ and $L$
is just an $L_{\infty}[1]$ algebra. Moreover, $\mathcal{E}$ is just a graded
manifold and the notion of action of $(A,L)$ on $\mathcal{A}$ reduces to the
notion of action of an $L_{\infty}[1]$ algebra on a graded manifold proposed
by Mehta and Zambon \cite{mz12}.

A special situation is when $\mathcal{F}$ is a vector bundle over
$\mathcal{M}$. In this case, $\mathcal{A}=S_{A}^{\bullet}\Gamma(\mathcal{F}%
)^{\ast}$ (up to \emph{smooth completion}), and an $(A,L)$-connection in
$\Gamma(\mathcal{F})$ determines a pre-action of $(A,L)$ on $\mathcal{A}$. It
is easy to see that pre-actions on $\mathcal{A}$ of this form are
characterized by the condition that the vector field $D^{\nabla}$ is
fiber-wise linear with respect to the vector bundle projection $\pi
:\mathcal{E}\times_{\mathcal{M}}\mathcal{F}\longrightarrow{}\mathcal{E}$.
\end{example}

\begin{proposition}
\label{Prop1}The following data are equivalent:

\begin{enumerate}
\item a pre-action of $(A,L)$ on $\mathcal{A}$,

\item a degree $1$ derivation of the algebra $\mathrm{Sym}_{A}(L,\mathcal{A})$
which agrees with $D$ on $\mathrm{Sym}_{A}(L,A)$,

\item an $\mathcal{A}$-module multiderivation of $\mathcal{A}\otimes_{A}L$
which agrees with $\mathbb{X}$ on $(A,L)$.
\end{enumerate}
\end{proposition}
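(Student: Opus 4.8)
The plan is to prove the two equivalences $(1)\Leftrightarrow(2)$ and $(2)\Leftrightarrow(3)$ separately, leaning on the translation machinery of Propositions~\ref{Prop8} and~\ref{Prop6} together with Remark~\ref{Rem1}, and being careful to use only the \emph{fixed-symbol} bijections of Remark~\ref{Rem1} (which hold with no regularity hypothesis on $L$) rather than the full isomorphisms of Propositions~\ref{Prop6} and~\ref{Prop8}, so that the statement comes out in full generality.

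For $(1)\Leftrightarrow(2)$, a left $(A,L)$-connection $\nabla$ in $\mathcal{A}$ is by definition an element $(\mathbb{X},\nabla)\in\widehat{\mathcal{L}}(\mathcal{A})$ subordinate to $\mathbb{X}$, and by Remark~\ref{Rem1} the map $\eta^{L}$ identifies such $\nabla$ bijectively with the degree $1$, $\mathrm{Sym}_{A}(L,A)$-module derivations $D^{\nabla}$ of $\mathrm{Sym}_{A}(L,\mathcal{A})$ whose symbol is $\eta(\mathbb{X})=D$, that is, which agree with $D$ on $\mathrm{Sym}_{A}(L,A)$. What must still be matched is the \emph{pre-action} condition of Definition~\ref{DefAct} against the requirement that $D^{\nabla}$ be a derivation of the \emph{algebra} $\mathrm{Sym}_{A}(L,\mathcal{A})$ and not merely a module derivation. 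The conceptual point I would exploit is that, by~(\ref{CEDM}) with $\ell=0$, the derivation acts on a $0$-form $f\in\mathcal{A}$ essentially by $\nabla(-\,|\,f)$ (up to a Koszul sign), so that the algebra-Leibniz rule for $D^{\nabla}$ on a product of $0$-forms is exactly the defining identity of a pre-action. A direct expansion of~(\ref{CEDM}) on a general product $\Omega\cdot\Omega'$ then confirms the converse: once $\nabla$ is algebra-derivation valued, the full algebra-Leibniz rule holds, the mixed terms reorganizing using that $D$ is already an algebra derivation of $\mathrm{Sym}_{A}(L,A)$. Hence $D^{\nabla}$ is an algebra derivation precisely when $\nabla$ is a pre-action.

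For $(2)\Leftrightarrow(3)$, the starting point is the canonical base-change isomorphism of graded commutative $\mathcal{A}$-algebras $\mathrm{Sym}_{\mathcal{A}}(\mathcal{A}\otimes_{A}L,\mathcal{A})\simeq\mathrm{Sym}_{A}(L,\mathcal{A})$, obtained by restricting an $\mathcal{A}$-multilinear form along $\xi\longmapsto 1\otimes\xi$; this is precisely the adjunction between extension and restriction of scalars applied to multilinear forms, and requires no projectivity. Under it, degree $1$ derivations of the two algebras correspond, and the subalgebra $\mathrm{Sym}_{A}(L,A)$ on which (2) prescribes agreement with $D$ corresponds to the forms pulled back from the $A$-valued $A$-forms on $L$. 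I would then apply the morphism $\eta$ of Proposition~\ref{Prop6} to the $\mathcal{A}$-module $\mathcal{A}\otimes_{A}L$, again in its fixed-symbol form: an $\mathcal{A}$-module multiderivation $\mathbb{Y}$ of $\mathcal{A}\otimes_{A}L$ with prescribed symbol corresponds bijectively to a derivation of $\mathrm{Sym}_{\mathcal{A}}(\mathcal{A}\otimes_{A}L,\mathcal{A})$ with that symbol, and the condition that $\mathbb{Y}$ agree with $\mathbb{X}$ on $(A,L)$ (namely $Y=X$ and $\sigma_{\mathbb{Y}}=\sigma_{\mathbb{X}}$ on $L$-arguments) translates into agreement with $D$ on $\mathrm{Sym}_{A}(L,A)$.

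The main obstacle, and the only genuinely computational point, is the well-definedness in the direction producing (3). Once $\sigma_{\mathbb{Y}}(\xi_{1},\ldots,\xi_{k-1})$ is set equal to the algebra derivation $\nabla(\xi_{1},\ldots,\xi_{k-1})$ of $\mathcal{A}$ (legitimate, since a pre-action forces $\nabla(\ldots)(1)=0$ and $\nabla(\ldots)|_{A}=\sigma_{\mathbb{X}}(\ldots)$ by~(\ref{29})), the first component $Y$ is forced on pure tensors $a_{1}\otimes\xi_{1},\ldots,a_{k}\otimes\xi_{k}$ by $\mathcal{A}$-multilinearity of the symbol and the module-derivation Leibniz rule, with $Y(1\otimes\xi_{1},\ldots,1\otimes\xi_{k}):=1\otimes X(\xi_{1},\ldots,\xi_{k})$; one must check that this prescription respects the defining relations of $\mathcal{A}\otimes_{A}L$ over $A$. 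This is exactly the kind of verification already carried out for the extensions $X^{\otimes}$ and $\nu$ and in Propositions~\ref{Prop6}--\ref{Prop9}, and I expect it to be careful but routine Koszul-sign bookkeeping with no conceptual difficulty beyond it.
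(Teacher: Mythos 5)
Your treatment of $(1)\Leftrightarrow(2)$ is fine and matches the paper, which dismisses this equivalence as straightforward: the fixed-symbol bijection of Remark~\ref{Rem1} for $\eta^{L}$ does hold without regularity hypotheses, and the identification of the pre-action Leibniz rule with the algebra-derivation property of $D^{\nabla}$ on $0$-forms is exactly the right point. Likewise, the construction you describe in your final paragraph --- extend $\sigma_{\mathbb{Y}}$ by $\mathcal{A}$-multilinearity from $\nabla$, force $Y$ on pure tensors by the Leibniz rule, and check compatibility with the relations of $\mathcal{A}\otimes_{A}L$ --- is precisely the paper's own argument.

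The genuine gap is in the mechanism you announce for $(2)\Leftrightarrow(3)$. You invoke Proposition~\ref{Prop6} ``in its fixed-symbol form,'' but no such form exists in the paper: Remark~\ref{Rem1} covers only $\eta^{L}$ and $\eta^{R}$ (module derivations of $\mathrm{Sym}_{A}(L,P)$ and $S_{A}^{\bullet}L\otimes_{A}Q$ subordinate to a \emph{fixed} multiderivation of $L$), not $\eta$ itself. Fixing the symbol of a multiderivation $\mathbb{Y}$ of $\mathcal{A}\otimes_{A}L$ does not make $\eta$ injective, let alone bijective, onto derivations of $\mathrm{Sym}_{\mathcal{A}}(\mathcal{A}\otimes_{A}L,\mathcal{A})$: recovering $Y$ from $\eta(\mathbb{Y})$ requires evaluating $1$-forms on the values of $Y$, which separates points only when the module is reflexive; the explicit inverse $\eta^{-1}$ in Proposition~\ref{Prop6} uses $L\simeq L^{\ast\ast}$. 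Indeed, the remark immediately following Proposition~\ref{Prop1} in the paper states that the isomorphism $\mathrm{Sym}_{\mathcal{A}}(\mathcal{A}\otimes_{A}L,\mathcal{A})\simeq\mathrm{Sym}_{A}(L,\mathcal{A})$ yields the equivalence of (2) and (3) \emph{only} in the projective, finitely generated case --- which is exactly the route you propose. The fix is to abandon $(2)\Leftrightarrow(3)$ via $\eta$ and instead prove $(1)\Leftrightarrow(3)$ directly, as the paper does and as your last paragraph effectively already does: a multiderivation of $\mathcal{A}\otimes_{A}L$ is determined by its restriction to the generators $1\otimes\xi$, so restriction gives $(3)\Rightarrow(1)$ and your extension construction gives $(1)\Rightarrow(3)$; the equivalence with (2) then follows by composing with $(1)\Leftrightarrow(2)$.
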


\begin{proof}
It is straightforward to check that (1) $\Longleftrightarrow$ (2): the
derivation of $\mathrm{Sym}_{A}(L,\mathcal{A})$ corresponding to a pre-action
$\nabla$ is just $D^{\nabla}$. Now, notice that an $\mathcal{A}$-module
multiderivation $\mathbb{Y}=(Y,\sigma_{\mathbb{Y}})$ of $\mathcal{A}%
\otimes_{A}L$ is completely determined by restrictions of $Y$ and
$\sigma_{\mathbb{Y}}$ to generators, i.e., elements of $L$. If $\mathbb{Y}$
agrees with $\mathbb{X}$ on $(A,L)$ then the restrictions $\sigma_{\mathbb{Y}%
}:L\times\cdots\times L\longrightarrow\mathrm{Der}\mathcal{A}$ of the symbol
determine a pre-action of $(A,L)$ on $\mathcal{A}$. Conversely, the structure
maps $\nabla:L\times\cdots\times L\longrightarrow\mathrm{Der}\mathcal{A}$ of a
pre-action can be extended to maps
\[
\sigma:(\mathcal{A}\otimes L)\times\cdots\times(\mathcal{A}\otimes
L)\longrightarrow\mathrm{Der}\mathcal{A}%
\]
by $\mathcal{A}$-linearity. Hence, $\mathbb{X}$ can be extended to an
$\mathcal{A}$-module multiderivation $\mathbb{Y}$ of $\mathcal{A}\otimes_{A}L$
demanding that $\sigma$ is its symbol.
\end{proof}

\begin{corollary}
\label{Cor1}The following data are equivalent

\begin{enumerate}
\item an action of $(A,L)$ on $\mathcal{A}$,

\item a degree $1$ homological derivation of the algebra $\mathrm{Sym}%
_{A}(L,\mathcal{A})$ which agrees with $D$ on $\mathrm{Sym}_{A}(L,A)$,

\item an $LR_{\infty}[1]$ algebra structure on $(\mathcal{A},\mathcal{A}%
\otimes_{A}L)$ which agrees with $\mathbb{X}$ on $(A,L)$.
\end{enumerate}
\end{corollary}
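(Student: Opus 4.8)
The plan is to deduce the corollary from Proposition \ref{Prop1} by checking that, under the three bijections established there, the \emph{flatness} conditions added in items (1)--(3) match up. Write $\nabla$ for a pre-action, $D^{\nabla}$ for the associated degree $1$ derivation of $\mathrm{Sym}_{A}(L,\mathcal{A})$, and $\mathbb{Y}=(Y,\sigma_{\mathbb{Y}})$ for the associated $\mathcal{A}$-module multiderivation of $M:=\mathcal{A}\otimes_{A}L$. I would show that the three conditions ``$\nabla$ is flat'' (i.e.\ $J(\nabla)=0$), ``$D^{\nabla}$ is homological'' (i.e.\ $\tfrac{1}{2}[D^{\nabla},D^{\nabla}]=0$), and ``$\mathbb{Y}$ is an $LR_{\infty}[1]$ structure'' (i.e.\ $\mathbb{J}(\mathbb{Y})=\tfrac{1}{2}[\mathbb{Y},\mathbb{Y}]=0$) are equivalent. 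Since $\mathbb{Y}$ agrees with $\mathbb{X}$ on $(A,L)$ and $\mathbb{X}_{0}=0$, one has $\mathbb{Y}_{0}=0$, so the third condition is exactly the requirement of Definition \ref{DefSHLR}.

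For (1) $\Longleftrightarrow$ (2) I would invoke the equivalence ``$\nabla$ flat iff $D^{\nabla}$ homological'' established above for left connections. It rests on Proposition \ref{Prop8}: since $\eta^{L}$ is a morphism of graded Lie algebras, $\tfrac{1}{2}[D^{\nabla},D^{\nabla}]$ is the second component of $\eta^{L}(\tfrac{1}{2}[(\mathbb{X},\nabla),(\mathbb{X},\nabla)])$, which by Remark \ref{RemLCurv} equals $\eta^{L}(\mathbb{J}(\mathbb{X}),J(\nabla))=\eta^{L}(0,J(\nabla))$ because $\mathbb{J}(\mathbb{X})=0$. As $J(\nabla)$ has vanishing symbol, the bijectivity of $\eta^{L}$ on the piece with first component $0$ (Remark \ref{Rem1}) forces $\tfrac{1}{2}[D^{\nabla},D^{\nabla}]=0$ if and only if $J(\nabla)=0$.

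For (1) $\Longleftrightarrow$ (3) I would argue directly on $M$. The bracket $\mathbb{J}(\mathbb{Y})=\tfrac{1}{2}[\mathbb{Y},\mathbb{Y}]$ is again an $\mathcal{A}$-module multiderivation of $M$, so by the principle used in the proof of Proposition \ref{Prop1} it is completely determined by the restrictions of its $X$-part and its symbol to the generators $\xi\in L\subset M$. On such arguments $Y$ restricts to $X$, with values again in $L$, and $\sigma_{\mathbb{Y}}$ restricts to $\nabla$; hence the Gerstenhaber-type compositions close up within $L$. The $X$-part of $\mathbb{J}(\mathbb{Y})$ then restricts to $J(X)$, and the symbol of $\mathbb{J}(\mathbb{Y})$ restricts to $\nabla\circ X+\tfrac{1}{2}[\nabla,\nabla]=J(\nabla)$. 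Since $\mathbb{J}(\mathbb{X})=0$ gives $J(X)=0$, we conclude $\mathbb{J}(\mathbb{Y})=0$ if and only if $J(\nabla)=0$, i.e.\ if and only if $\nabla$ is flat. Combined with (1) $\Longleftrightarrow$ (2), this proves the three data equivalent.

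The main obstacle is the computation in this last step: one must verify, tracking the Koszul signs of the Gerstenhaber-type formulas (\ref{12}) and (\ref{13}), that evaluating $[\mathbb{Y},\mathbb{Y}]$ on arguments drawn from $L$ produces no contamination from the $\mathcal{A}$-direction of $M=\mathcal{A}\otimes_{A}L$, so that the $X$-part collapses exactly to $J(X)$ and the symbol-part exactly to $J(\nabla)$. This is precisely where the requirement that a pre-action take values in genuine algebra derivations of $\mathcal{A}$ (Definition \ref{DefAct}) enters, ensuring that the extended symbol $\sigma_{\mathbb{Y}}$ restricts to $\nabla$ on $L$; once this is in place, the remaining bookkeeping is routine.
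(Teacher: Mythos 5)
Your proof is correct and fills in exactly the argument the paper leaves implicit: Corollary \ref{Cor1} is stated without a proof as an immediate consequence of Proposition \ref{Prop1}, the point being precisely that the three bijections there intertwine the conditions $J(\nabla)=0$, $\tfrac{1}{2}[D^{\nabla},D^{\nabla}]=0$, and $\mathbb{J}(\mathbb{Y})=0$. Your verification of these matchings --- via Remark \ref{Rem1} for (1)$\Leftrightarrow$(2), and restriction of the $X$-part and symbol of $\tfrac{1}{2}[\mathbb{Y},\mathbb{Y}]$ to generators in $L$ for (1)$\Leftrightarrow$(3) --- is sound and consistent with the paper's conventions.
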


Let $\nabla$ be an action of $(A,L)$ on $\mathcal{A}$. In view of Proposition
\ref{Prop1}, the Chevalley-Eilenberg DG module $\mathbf{CE}(\mathcal{A})$ of
$\mathcal{A}$ is actually a DG algebra. On the other hand, the
Chevalley-Eilenberg DG algebra of $(\mathcal{A},\mathcal{A}\otimes_{A}L)$ is
\begin{equation}
\mathbf{CE}(\mathcal{A},\mathcal{A}\otimes_{A}L)=(\mathrm{Sym}_{\mathcal{A}%
}(\mathcal{A}\otimes_{A}L,\mathcal{A}),D)\simeq(\mathrm{Sym}_{A}%
(L,\mathcal{A}),D^{\nabla})=\mathbf{CE}(\mathcal{A}), \label{7}%
\end{equation}
i.e., it is canonically isomorphic to the Chevalley-Eilenberg DG module of
$\mathcal{A}$. In particular, there is an obvious sequence of DG algebras (and
$A$-modules)
\begin{equation}
\mathbf{CE}(A,L)\longrightarrow\mathbf{CE}(\mathcal{A})\longrightarrow
(\mathcal{A},\nabla_{1}). \label{8}%
\end{equation}

\begin{remark}
The $\mathcal{A}$-algebra isomorphism $\mathrm{Sym}_{\mathcal{A}}%
(\mathcal{A}\otimes_{A}L,\mathcal{A})\simeq\mathrm{Sym}_{A}(L,\mathcal{A})$
provides an alternative proof of the equivalence of data (2) and (3) in
Proposition \ref{Prop1} in the case when $L$ is projective and finitely
generated. Indeed, in this case, $\mathcal{A}\otimes_{A}L$ is a projective and
finitely generated $\mathcal{A}$-module and a derivation of $\mathrm{Sym}%
_{\mathcal{A}}(\mathcal{A}\otimes_{A}L,\mathcal{A})$ is equivalent to an
$\mathcal{A}$-module multiderivation of $\mathcal{A}\otimes_{A}L$.
\end{remark}

In view of the following example, it is natural to call \emph{transformation
}$LR_{\infty}[1]$\emph{ algebra} the $LR_{\infty}[1]$ algebra $(\mathcal{A}%
,\mathcal{A}\otimes_{A}L)$ corresponding to an action of $(A,L)$ on
$\mathcal{A}$ via Corollary \ref{Cor1}.

\begin{example}
\label{ExampALA}Let $A$ be the graded algebra of smooth functions on a graded
manifold $\mathcal{M}$ and $L$ be the module of sections of a graded vector
bundle $\mathcal{E}$ over $\mathcal{M}$. Moreover, let $\mathcal{A}$ be the
$A$-algebra of smooth functions on a graded manifold $\mathcal{F}$ which is
fibered over $\mathcal{M}$, and let $\nabla$ be an action of $(A,L)$ on
$\mathcal{A}$. Notice that the $\mathcal{A}$-module $\mathcal{A}\otimes_{A}L$
is the module of sections of the induced bundle $\xi:\mathcal{E}%
\times_{\mathcal{M}}\mathcal{F}\longrightarrow$ $\mathcal{F}$. It is easy to
see that the vector field $D^{\nabla}$ is tangent to the zero section of $\xi
$. This shows that $\mathcal{E}\times_{\mathcal{M}}\mathcal{F}$ has the
structure of an $L_{\infty}[1]$ algebroid over $\mathcal{F}$. I call any such
$L_{\infty}[1]$ algebroid a \emph{transformation} $L_{\infty}[1]$
\emph{algebroid }and denote it by $\mathcal{E}\ltimes_{\mathcal{M}}%
\mathcal{F}$. The transformation $L_{\infty}[1]$ algebroid $\mathcal{E}%
\ltimes_{\mathcal{M}}\mathcal{F}$ fits into the sequence of $Q$-manifolds (and
bundles over $\mathcal{M}$)
\begin{equation}
(\mathcal{F},\nabla_{1})\longrightarrow(\mathcal{E}\ltimes_{\mathcal{M}%
}\mathcal{F},D^{\nabla})\longrightarrow(\mathcal{E},D), \label{2}%
\end{equation}
where the first arrow is the zero section. Sequence (\ref{2}) is the geometric
counterpart of Sequence (\ref{8}) and generalizes Sequence (7) of \cite{mz12}.
\end{example}

\subsection{{Higher Left Schouten-Nijenhuis Calculus\label{SecLSN}}}

Cartan calculus on a smooth manifold is the calculus of vector fields and
differential forms. Schouten-Nijenhuis calculus is the calculus of
(skew-symmetric) multivector fields and differential forms. The latter
consists of some identities involving the Schouten-Nijenhuis bracket, the
insertion of multivectors into differential forms, and the exterior
derivative. Namely, let $i_{u}$ be the insertion of a multivector $u$ into
differential forms. The \emph{Lie derivative }along $u$ is the operator
$L_{u}:=[i_{u},d]$, where $d$ is the exterior derivative. Denote by
$[u,v]_{\mathrm{sn}}$ the Schouten-Nijenhuis bracket of multivectors $u,v$.
The following identity holds%

\begin{equation}
\lbrack\lbrack d,i_{u}],i_{v}]=-(-)^{u}i_{[u,v]_{\mathrm{sn}}}, \label{31}%
\end{equation}
$u,v$ multivectors. Moreover, from (\ref{31}), and the definition of the Lie
derivative, it follows immediately that
\begin{align}
\lbrack L_{u},L_{v}]  &  =L_{[u,v]_{\mathrm{sn}}},\label{Schout}\\
L_{u}i_{v}  &  =i_{[u,v]_{\mathrm{sn}}}-(-)^{u}i_{v}L_{u},\label{Schout2}\\
L_{uv}  &  =i_{u}L_{v}+(-)^{v}L_{u}i_{v}. \label{Schout3}%
\end{align}
The above identities can be generalized as follows. Replace ordinary
differential forms with differential forms with values in a vector bundle
equipped with a connection. Replace $d$ with the Chevalley-Eilenberg operator
of the connection. All the above identities remain valid except (\ref{Schout})
which gains terms on the right hand side depending on the curvature of the
connection. Finally, Schouten-Nijenhuis calculus can be easily extended to Lie
algebroids (actually, LR algebras) more general than the tangent bundle. In
this section, I generalize further to SH LR algebras and left connections
along them. Higher derived brackets \cite{v05,u11} play here a key role.

Let $(A,L)$ be an $LR_{\infty}[1]$ algebra with structure multiderivation
$\mathbb{X}$. Recall that $\mathbb{X}$ determines, via Proposition
\ref{Prop7}, a $P_{\infty}[1]$ algebra structure $\Lambda$ on $S_{A}^{\bullet
}L$, i.e., a multiderivation $\Lambda=\Lambda_{1}+\Lambda_{2}+\cdots$ such
that $[\Lambda,\Lambda]=0$. In particular, $\Lambda$ is an $L_{\infty}[1]$
algebra structure on $S_{A}^{\bullet}L$. In other words, $(K,S_{A}^{\bullet
}L)$ is an $LR_{\infty}[1]$ algebra. In this section I will adopt this
interpretation. In the following denote
\[
\{u_{1},\ldots,u_{k}\}:=\Lambda(u_{1},\ldots,u_{k}),\quad u_{1},\ldots
,u_{k}\in S_{A}^{\bullet}L,\quad k\in\mathbb{N}.
\]
Consider an $A$-module $P$ with a left $(A,L)$-connection $\nabla$, and the
corresponding $\mathrm{Sym}_{A}(L,A)$-module derivation $D^{\nabla}%
=D_{1}^{\nabla}+D_{2}^{\nabla}+\cdots$ of $\mathrm{Sym}_{A}(L,P)$.

Recall that, if $B$ is an associative, graded commutative, unital algebra and
$R$ is a $B$-module, a {(linear)} \emph{differential operator of order }$k$ in
$R$ is a $K$-linear map $\square:R\longrightarrow R$ such that
\[
\lbrack\cdots\lbrack\lbrack\square,b_{1}],b_{2}]\cdots,b_{k+1}]=0,\quad
b_{1},\ldots,b_{k+1}\in P,
\]
where the $b_{i}$'s are interpreted as multiplication operators
$P\longrightarrow P$, $p\longmapsto b_{i}p$ {(see, for instance, \cite{k97})}.

\begin{proposition}
\label{Prop2}{For all $u_{1},\ldots,u_{k}\in S_{A}^{\bullet}L$,
\begin{equation}
\lbrack\cdots\lbrack\lbrack D_{k}^{\nabla},i_{u_{1}}],i_{u_{2}}]\cdots
,i_{u_{k}}]=-(-)^{k}i_{\{u_{1},\ldots,u_{k}\}}. \label{5}%
\end{equation}
In particular, $D_{k}^{\nabla}$ is a differential operator of order $k$ in the
$S_{A}^{\bullet}L$-module $\mathrm{Sym}_{A}(L,P)$.}
\end{proposition}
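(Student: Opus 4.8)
The plan is to read the iterated commutator in (\ref{5}) as an iterated \emph{Lie bracket} transported through the morphism $\eta^{L}$ of Proposition~\ref{Prop8}, so that the whole statement becomes a bracket computation in the graded Lie algebra $\mathcal{L}(P)$. The first observation is that insertion operators are themselves $\eta^{L}$-images: the $0$-entry multiderivations of $L$ are just elements $\xi\in\mathrm{Der}^{0}L=L$, and the $k=0$ component $\mathcal{L}^{0}(P)=\mathrm{Der}^{0}L$ (the connection datum being vacuous for $k=0$) is sent by the defining formula of $\eta^{L}$, namely $\Omega\mapsto\nabla\circ\Omega-(-)^{\mathbb{X}\Omega}\Omega\circ X$, to the operator $\Omega\mapsto-(-)^{\xi\Omega}\Omega\circ\xi$. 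Thus, up to the Koszul sign built into the insertion operator, $\eta^{L}(\xi)=-i_{\xi}$. Likewise $D_{k}^{\nabla}=\eta^{L}(\mathbb{X}_{k},\nabla_{k})$ with $(\mathbb{X}_{k},\nabla_{k})\in\mathcal{L}^{k}(P)$. Note that no flatness hypothesis is needed, since (\ref{5}) concerns $D_{k}^{\nabla}$ itself and not its square.

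Because $\eta^{L}$ preserves brackets, each commutator with $i_{\xi_{j}}=-\eta^{L}(\xi_{j})$ becomes, up to a sign, the bracket with $(\xi_{j},0)$, and an easy alternation of signs gives
\[
[\cdots[[D_{k}^{\nabla},i_{\xi_{1}}],\ldots],i_{\xi_{k}}]=(-)^{k}\,\eta^{L}\big([\cdots[[(\mathbb{X}_{k},\nabla_{k}),(\xi_{1},0)],\ldots],(\xi_{k},0)]\big).
\]
The structural heart of the argument is then read off from the bracket formula (\ref{30}): bracketing a $k$-entry element with the $0$-entry element $(\xi_{j},0)$ has $X'=\xi_{j}$ and $\nabla'=0$, hence just contracts $\xi_{j}$ into both components and lowers the arity by one. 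After $k$ such contractions the arity drops from $k$ to $0$: the first component becomes $X_{k}(\xi_{1},\ldots,\xi_{k})\in L=\mathrm{Der}^{0}L$, while the connection component vanishes identically, because $\nabla_{k}$ is only a $(k-1)$-form and cannot absorb $k$ contractions. So the iterated bracket equals $(X_{k}(\xi_{1},\ldots,\xi_{k}),0)\in\mathcal{L}^{0}(P)$, and one further application of $\eta^{L}=-i_{(\,\cdot\,)}$ yields $-(-)^{k}i_{X_{k}(\xi_{1},\ldots,\xi_{k})}=-(-)^{k}i_{\{\xi_{1},\ldots,\xi_{k}\}}$, which is precisely (\ref{5}) for generators $u_{j}=\xi_{j}\in L$ (here $\{\xi_{1},\ldots,\xi_{k}\}=\Lambda_{k}(\xi_{1},\ldots,\xi_{k})=X_{k}(\xi_{1},\ldots,\xi_{k})$).

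To pass from generators to arbitrary $u_{j}\in S_{A}^{\bullet}L$ I would induct on the total symmetric degree. Using $i_{vw}=i_{v}i_{w}$ and the Leibniz rule for the commutator, the left-hand side of (\ref{5}) is a graded derivation in each slot; since $\{\,\cdot\,,\ldots,\cdot\,\}=\Lambda_{k}$ is a multiderivation of $S_{A}^{\bullet}L$ (Proposition~\ref{Prop7}) and $u\mapsto i_{u}$ is an algebra homomorphism, the right-hand side is a derivation in each slot as well, and the two Leibniz expansions coincide by the inductive hypothesis. The base cases are the degree-$1$ slots $u_{j}\in L$ treated above, and the degree-$0$ slots $u_{j}=a\in A$, where $i_{a}$ is multiplication by the $0$-form $a$; here the identity reduces to $[D_{k}^{\nabla},i_{a}]=i_{\sigma_{\mathbb{X}_{k}}(\,\cdot\,|a)}$, which is dictated by the symbol $\eta(\mathbb{X}_{k})$ of $D_{k}^{\nabla}$ and hence by the anchors, and matches $-(-)^{k}i_{\{a,\ldots\}}$ after the remaining contractions. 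Finally, the ``in particular'' clause is immediate: the right-hand side of (\ref{5}) is an insertion operator, insertion operators graded-commute, so one more commutator with $i_{u_{k+1}}$ vanishes, proving that $D_{k}^{\nabla}$ has order $k$.

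The main obstacle I anticipate is bookkeeping rather than conceptual. Two points require care: first, correctly installing the $k=0$ piece $\mathcal{L}^{0}(P)=\mathrm{Der}^{0}L$ inside the Lie algebra $\mathcal{L}(P)$ and checking that $\eta^{L}$ and the bracket (\ref{30}) extend to it compatibly; second, verifying that the signs coming from $\eta^{L}$ being a graded Lie morphism, together with the two uses of $\eta^{L}(\xi)=-i_{\xi}$, assemble into exactly the factor $-(-)^{k}$ against whatever Koszul convention is fixed for $i_{u}$. A lesser nuisance is the degree-$0$ base case in the reduction to generators, where the anchors $\sigma_{\mathbb{X}_{k}}$ rather than the brackets $X_{k}$ enter; this is only the elementary symbol identity noted above. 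As a cross-check one could instead expand everything by the explicit formula (\ref{CEDM}) and track which terms survive the iterated commutator, but the $\eta^{L}$-route makes the vanishing of the $\nabla$-contributions transparent through the over-contraction of $\nabla_{k}$.
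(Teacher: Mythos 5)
Your proposal is correct in outline but takes a genuinely different route for the core of the argument. The reduction to generators is identical to the paper's: both use $[i_{uv},\mathcal{D}]=i_{u}[i_{v},\mathcal{D}]+(-)^{v\mathcal{D}}[i_{u},\mathcal{D}]i_{v}$ together with the fact that $\Lambda_k$ is a multiderivation of $S_A^\bullet L$ to induct on the symmetric degrees. Where you diverge is the generator case $u_1,\ldots,u_k\in L$: the paper proves it by brute force, polarizing to $\xi_1=\cdots=\xi_k=\xi$ and $\zeta_1=\cdots=\zeta_{r-1}=\zeta$ even, expanding $\sum_j(-)^{k-j}\binom{k}{j}i_\xi^{k-j}D_k^\nabla i_\xi^j$ and collapsing the result with the binomial identity (\ref{21}); you instead observe that $i_\xi=-\eta^L(\xi,0)$ with $(\xi,0)\in\mathcal{L}^0(P)$, so that the iterated commutator is $(-)^k\eta^L$ of an iterated bracket in $\mathcal{L}(P)$, which by Formula (\ref{30}) collapses to $(X_k(\xi_1,\ldots,\xi_k),0)$ because the $\nabla$-component is over-contracted. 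Your sign bookkeeping checks out ($(-)^k\cdot(-1)=-(-)^k$), and the identification $\eta^L(\xi,0)=-i_\xi$ is indeed what the defining formula of $\eta^L$ gives against Formula (\ref{10}). What your route buys is a conceptual explanation of (\ref{5}) as an intertwining statement and the elimination of the binomial computation; what it costs is that the entire weight now rests on the bracket-morphism property of $\eta^L$ \emph{including the component} $\mathcal{L}^0(P)$, which Proposition \ref{Prop8} asserts only via an unwritten ``careful but straightforward computation'' and which you would need to verify explicitly at least for the single commutator $[D_k^\nabla,i_\xi]=-\eta^L([(\mathbb{X}_k,\nabla_k),(\xi,0)])$ --- a check comparable to one step of the paper's computation, though it need only be done once since the iteration is then formal. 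Two small corrections: in your degree-zero base case the operator $[D_k^\nabla,i_a]$ is multiplication $\mu_{D_ka}$ by the $(k-1)$-\emph{form} $D_ka=\sigma_{\mathbb{X}_k}(\,\cdot\,|a)$, not an insertion operator $i$ of a tensor; and for the induction to close you should also record the (trivial) vanishing of both sides when two or more slots carry elements of $A$, as the paper does.
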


\begin{proof}
{First of all, notice that, since $[i_{u},i_{v}]=0$ for all tensors $u,v$,
then (\ref{5}) implies that $D_{k}^{\nabla}$ is a differential operator of
order $k$, as claimed. Now I prove (\ref{5}).} Suppose, preliminarly, that
identity (\ref{5}) is true when $u_{1},\ldots,u_{k}$ are either from $A$ or
from $L$. Now, let $u_{i}\in S_{A}^{\ell_{i}}L$, $i=1,\ldots,k$. Since
$[i_{uv},\mathcal{D}]=i_{u}[i_{v},\mathcal{D}]+(-)^{v\mathcal{D}}%
[i_{u},\mathcal{D}]i_{v}$, for all symmetric tensors $u,v$ and all graded
$K$-linear endomorphisms $\mathcal{D}$ of $\mathrm{Sym}_{A}(L,P)$, the general
assertion (\ref{5}) follows from an easy induction on the $\ell_{i}$. It
remains to prove (\ref{5}) when $u_{1},\ldots,u_{k}$ are either $0$ or $1$-tensors.

Put $\mathcal{I}(u_{1},\ldots,u_{k}):=[\cdots\lbrack\lbrack D_{k}^{\nabla
},i_{u_{1}}],i_{u_{2}}]\cdots,i_{u_{k}}]$. Since $[i_{u},i_{v}]=0$ for all
tensors $u,v$, then, in view of the Jacobi identity for the graded commutator,
$\mathcal{I}$ is graded symmetric in its arguments. Moreover, it vanishes
whenever two arguments are from $A$. Now, let $a\in A$, and let $\xi
_{1},\ldots,\xi_{k-1}\in L$. It is easy to see that
\[
\mathcal{I}(a,\xi_{1},\ldots,\xi_{k-1}){}=-(-)^{k}i_{\{a,\xi_{1},\ldots
,\xi_{k-1}\}}.
\]
Finally, I have to show that
\[
\mathcal{I}(\xi_{1},\ldots,\xi_{k}){}=-(-)^{k}i_{\{\xi_{1},\ldots,\xi_{k}%
\}},\quad\xi_{1},\ldots,\xi_{k}\in L.
\]
Notice that $\mathcal{I}(\xi_{1},\ldots,\xi_{k})$ is an $A$-linear
endomorphism of $\mathrm{Sym}_{A}(L,P)$, and, for $\Omega$ an $r$-form,
$\mathcal{I}(\xi_{1},\ldots,\xi_{k})(\Omega)$ is an $(r-1)$-form. Thus, I have
to show that
\begin{equation}
\mathcal{I}(\xi_{1},\ldots,\xi_{k})(\Omega)(\zeta_{1},\ldots,\zeta
_{r-1})=-(-)^{k}(i_{\{\xi_{1},\ldots,\xi_{k}\}}\Omega)(\zeta_{1},\ldots
,\zeta_{r-1}), \label{14}%
\end{equation}
for all $\Omega$ and all $\zeta_{1},\ldots,\zeta_{r-1}\in L$. For $r=0$ both
sides of (\ref{14}) vanish. Now, let $r>0$. Since both sides of (\ref{14}) are
graded symmetric in $\xi_{1},\ldots,\xi_{k}$ on one side, and in $\zeta
_{1},\ldots,\zeta_{r-1}$ on the other side, it is enough to consider the case
when $\xi_{1}=\cdots=\xi_{k}=\xi$, $\zeta_{1}=\cdots=\zeta_{r-1}=\zeta$, and
$\xi$ and $\zeta$ are even. Thus, put
\[
\xi^{m}:=(\underset{m\text{ times}}{\underbrace{\xi,\ldots,\xi}})\in L^{\times
m},
\]
and similarly for $\zeta$. Compute
\begin{equation}
\mathcal{I}(\xi^{k})(\Omega)(\zeta^{r-1})=\sum_{j=0}^{k}(-)^{k-j}\tbinom{k}%
{j}(i_{\xi}^{k-j}D_{k}^{\nabla}i_{\xi}^{j}\Omega)(\zeta^{r-1}), \label{20}%
\end{equation}
distinguishing the following two cases.

\textbf{Case I :} $k\leq r$. A straightforward computation shows that the
right hand side of (\ref{20}) is%
\begin{align*}
&  \sum_{i=1}^{k}\sum_{j=1}^{i}(-)^{k-j}\tbinom{k}{j}\tbinom{k-j}{k-i}%
\tbinom{r-1}{i-1}\nabla_{k}(\xi^{k-i},\zeta^{i-1}|\Omega(\xi^{i},\zeta
^{r-k+i}))\\
&  -\sum_{i=1}^{k}\sum_{j=0}^{i}(-)^{\Omega+k-j}\tbinom{k}{j}\tbinom{k-j}%
{k-i}\tbinom{r-1}{i}\Omega(\{\xi^{k-i},\zeta^{i}\},\xi^{i},\zeta^{r-i-1})\\
&  +\sum_{i=1}^{k}(-)^{k}\tbinom{k}{i}\tbinom{r-1}{i-1}\nabla_{k}(\xi
^{k-i},\zeta^{i-1}|\Omega(\xi^{i},\zeta^{r-i}))-(-)^{\Omega}\Omega(\{\xi
^{k}\},\zeta^{r-1})
\end{align*}
Now,%
\begin{equation}
\sum_{j=\varepsilon}^{i}(-)^{j}\tbinom{k}{j}\tbinom{k-j}{k-i}=\left\{
\begin{array}
[c]{cc}%
0 & \text{if }\varepsilon=0\\
-\tbinom{k}{i} & \text{if }\varepsilon=1
\end{array}
\right.  . \label{21}%
\end{equation}
Substituting above, one gets%
\[
\mathcal{I}(\xi^{k})(\Omega)(\zeta^{r-1})=-(-)^{k}(i_{\{\xi^{k}\}}%
\Omega)(\zeta^{r-1}).
\]
\textbf{Case II: }$k>r$. The right hand side of (\ref{20}) is
\begin{align*}
&  \sum_{i=1}^{r}\sum_{j=1}^{i}(-)^{k-j}\tbinom{k}{i}\tbinom{i}{j}\tbinom
{r-1}{i-1}\nabla_{k}(\xi^{k-i},\zeta^{i-1}|\Omega(\xi^{i},\zeta^{r-k+i}))\\
&  +\sum_{i=r+1}^{k}\sum_{j=1}^{r}(-)^{k-j}\tbinom{k}{i}\tbinom{i}{j}%
\tbinom{r-1}{i-1}\nabla_{k}(\xi^{k-i},\zeta^{i-1}|\Omega(\xi^{i},\zeta
^{r-k+i}))\\
&  -\sum_{i=0}^{r}\sum_{j=0}^{i}(-)^{\Omega+k-j}\tbinom{k}{i}\tbinom{i}%
{j}\tbinom{r-1}{i}\Omega(\{\xi^{k-i},\zeta^{i}\},\xi^{i},\zeta^{r-i-1})\\
&  -\sum_{i=r+1}^{k}\sum_{j=0}^{r}(-)^{\Omega+k-j}\tbinom{k}{i}\tbinom{i}%
{j}\tbinom{r-1}{i}\Omega(\{\xi^{k-i},\zeta^{i}\},\xi^{i},\zeta^{r-i-1})\\
&  +\sum_{i=1}^{k}(-)^{k}\tbinom{k}{i}\tbinom{r-1}{i-1}\nabla_{k}(\xi
^{k-i},\zeta^{i-1}|\Omega(\xi^{i},\zeta^{r-i})),
\end{align*}
and, using again (\ref{21}), one gets%
\begin{align*}
\mathcal{I}(\xi^{k})(\Omega)(\zeta^{r-1})  &  =-\sum_{i=1}^{r}(-)^{k}%
\tbinom{k}{i}\tbinom{r-1}{i-1}\nabla_{k}(\xi^{k-i},\zeta^{i-1}|\Omega(\xi
^{i},\zeta^{r-k+i}))\\
&  -\sum_{i=r+1}^{k}(-)^{k}\tbinom{k}{i}\tbinom{r-1}{i-1}\nabla_{k}(\xi
^{k-i},\zeta^{i-1}|\Omega(\xi^{i},\zeta^{r-k+i}))\\
&  +\sum_{i=1}^{k}(-)^{k}\tbinom{k}{i}\tbinom{r-1}{i-1}\nabla_{k}(\xi
^{k-i},\zeta^{i-1}|\Omega(\xi^{i},\zeta^{r-i}))\\
&  -(-)^{\Omega+k}\Omega(\{\xi^{k}\},\zeta^{r-1})=-(-)^{k}(i_{\{\xi^{k}%
\}}\Omega)(\zeta^{r-1}).
\end{align*}
\end{proof}

Identity (\ref{5}) is a homotopy version of identity (\ref{31}). Now I define
a \textquotedblleft\emph{homotopy Lie derivative}\textquotedblright\ and prove
homotopy versions of identities (\ref{Schout}), (\ref{Schout2}), and
(\ref{Schout3}). For $u_{1},\ldots,u_{k-1}\in S_{A}^{\bullet}L$, and
$\Omega\in\mathrm{Sym}_{A}(L,P)$, put
\[
L_{k}^{\nabla}(u_{1},\ldots,u_{k-1}|\Omega):=-(-)^{k}[\cdots\lbrack\lbrack
D_{k}^{\nabla},i_{u_{1}}],i_{u_{2}}]\cdots,i_{u_{k-1}}]\Omega.
\]

\begin{theorem}
\label{Theor1}The sum $L^{\nabla}:=L_{1}^{\nabla}+L_{2}^{\nabla}+\cdots$ is a
$(K,S_{A}^{\bullet}L)$-connection in $\mathrm{Sym}_{A}(L,P)$ whose curvature
$J(L^{\nabla})$ is given by
\begin{equation}
J(L^{\nabla})(u_{1},\ldots,u_{k-1}|\Omega)=-(-)^{k}[\cdots\lbrack
\lbrack\mathcal{J}_{k}^{\nabla},i_{u_{1}}],i_{u_{2}}]\cdots,i_{u_{k-1}}%
]\Omega. \label{22}%
\end{equation}
Moreover
\begin{equation}
L^{\nabla}(u_{1},\ldots,u_{k-1}|i_{u}\Omega)=i_{\{u_{1},\ldots,u_{k-1}%
,u\}}\Omega+(-)^{\chi}i_{u}L^{\nabla}(u_{1},\ldots,u_{k-1}|\Omega) \label{23}%
\end{equation}
where $\chi=\bar{u}(\bar{u}_{1}+\cdots+\bar{u}_{k-1}+1)$, and
\begin{equation}
L^{\nabla}(uu_{1},u_{2},\ldots,u_{k-1}|\Omega)=(-)^{u}i_{u}L^{\nabla}%
(u_{1},\ldots,u_{k-1}|\Omega)+(-)^{\chi^{\prime}}L^{\nabla}(u,u_{2}%
,\ldots,u_{k-1}|i_{u_{1}}\Omega) \label{24}%
\end{equation}
where $\chi^{\prime}=\bar{u}_{1}(\bar{u}_{2}+\cdots+\bar{u}_{k-1})$, for all
tensors $u,u_{1},\ldots,u_{k}$, and all $P$-valued forms $\Omega$.
\end{theorem}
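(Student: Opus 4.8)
The plan is to realize all three assertions as instances of the higher derived bracket calculus of \cite{v05,u11}, carried out inside the graded Lie algebra $\mathfrak{g}:=\mathrm{End}_K(\mathrm{Sym}_A(L,P))$ with the commutator bracket. The only facts I need about insertions are the two already exploited in the proof of Proposition \ref{Prop2}: they pairwise commute, $[i_u,i_v]=0$, and they obey the Leibniz rule $[i_{uv},\mathcal D]=i_u[i_v,\mathcal D]+(-)^{v\mathcal D}[i_u,\mathcal D]i_v$. Writing $\Theta_n(\mathcal D)(u_1,\ldots,u_n):=[\cdots[[\mathcal D,i_{u_1}],i_{u_2}]\cdots,i_{u_n}]$ for the iterated adjoint action, one has by definition $L_k^\nabla(u_1,\ldots,u_{k-1}|\Omega)=-(-)^k\,\Theta_{k-1}(D_k^\nabla)(u_1,\ldots,u_{k-1})\,\Omega$.

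First I would dispose of the structural part of (1) and of the two Schouten identities. Since $[i_u,i_v]=0$ gives $[\mathrm{ad}_{i_u},\mathrm{ad}_{i_v}]=\mathrm{ad}_{[i_u,i_v]}=0$, the Jacobi identity for the commutator shows immediately that $\Theta_{k-1}(D_k^\nabla)$, hence $L_k^\nabla$, is graded symmetric in $u_1,\ldots,u_{k-1}$; a degree count makes $L_k^\nabla$ a degree $1$, $K$-multilinear map, and over the base field $K$ the subordination/symbol condition of Definition \ref{DefLConn} is vacuous (the symbol of $\Lambda$ is valued in $\mathrm{Der}_K K=0$). Thus $(\Lambda,L^\nabla)\in\widehat{\mathcal L}(\mathrm{Sym}_A(L,P))$ is a $(K,S_A^\bullet L)$-connection. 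Identity (\ref{23}) then falls out of Proposition \ref{Prop2}: the sign $(-)^\chi$ is exactly the Koszul sign turning $\Theta_{k-1}(D_k^\nabla)\,i_u$ into a graded commutator, so $L^\nabla(u_1,\ldots,u_{k-1}|i_u\Omega)-(-)^\chi i_uL^\nabla(u_1,\ldots,u_{k-1}|\Omega)=-(-)^k\Theta_{k}(D_k^\nabla)(u_1,\ldots,u_{k-1},u)\Omega=i_{\{u_1,\ldots,u_{k-1},u\}}\Omega$ by (\ref{5}). Identity (\ref{24}) follows by expanding $\Theta_{k-1}(D_k^\nabla)(uu_1,u_2,\ldots,u_{k-1})$ through the Leibniz rule for $[i_{uu_1},\,\cdot\,]$ together with the symmetry of $\Theta$, which splits it precisely into $i_uL^\nabla(u_1,\ldots|\Omega)$ and $L^\nabla(u,u_2,\ldots|i_{u_1}\Omega)$.

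The core of the theorem is the curvature formula (\ref{22}). Here the decisive observation is that each $\mathrm{ad}_{i_u}$ is a derivation of the commutator bracket (again Jacobi) and that these derivations commute, so the iterated adjoint action of a commutator expands by a co-Leibniz (shuffle) rule: distributing $k-1$ insertions over the two factors of $[D_a^\nabla,D_b^\nabla]$ yields $\Theta_{k-1}([D_a^\nabla,D_b^\nabla])=\sum_{p+q=k-1}\sum_{\text{shuffles}}\pm[\Theta_p(D_a^\nabla),\Theta_q(D_b^\nabla)]$. Summing over $a+b=k+1$ and dividing by two produces $\Theta_{k-1}(\mathcal J_k^\nabla)$, because $\mathcal J^\nabla=\tfrac12[D^\nabla,D^\nabla]$. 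On this component the constraint $(a-p)+(b-q)=2$ leaves exactly three cases. When one factor is saturated ($p=a$ or $q=b$), Proposition \ref{Prop2} collapses it to an insertion $\pm i_{\Lambda_a(\cdots)}$ (resp.\ $\pm i_{\Lambda_b(\cdots)}$), and one further commutator turns the surviving factor into a term $\pm L^\nabla(\Lambda(\cdots),\ldots|\Omega)$; by the $a\leftrightarrow b$ symmetry these assemble into the first sum of (\ref{LeftCurv}), i.e.\ the $L^\nabla\circ\Lambda$ contribution to the curvature. When neither factor is saturated ($p=a-1$, $q=b-1$), both factors are genuine homotopy Lie derivatives, and their commutator applied to $\Omega$ produces the nested terms $L^\nabla(\ldots|L^\nabla(\ldots|\Omega))$ constituting $\tfrac12[L^\nabla,L^\nabla]$. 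Comparing with (\ref{LeftCurv}) identifies the total with $J(L^\nabla)(u_1,\ldots,u_{k-1}|\Omega)$, giving $J(L^\nabla)=-(-)^k\Theta_{k-1}(\mathcal J_k^\nabla)$, which is (\ref{22}). Equivalently, the derived-bracket assignment $\Delta\mapsto L^\Delta$ intertwines $\tfrac12[\Delta,\Delta]$ with the curvature operator $J$ of Definition \ref{DefLConn}, so that flatness of $\nabla$ is equivalent to flatness of $L^\nabla$.

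I expect the real difficulty to lie entirely in the sign bookkeeping, not in the architecture. The combinatorial skeleton — graded symmetry from $[i_u,i_v]=0$, the three-case split forced by $(a-p)+(b-q)=2$, and the collapse of saturated factors via (\ref{5}) — is rigid and essentially sign-free. The delicate point is reconciling the Koszul signs generated as the $i_u$ are permuted past one another and past $D^\nabla$ in the co-Leibniz expansion with the signs $\alpha(\sigma,\boldsymbol\xi)$ and $(-)^\chi$ in (\ref{LeftCurv}), (\ref{23}) and (\ref{24}), together with the normalization $-(-)^k$ and the degree of $i_u$. This I would handle by the same bilinear-pairing-and-induction bookkeeping already used for Proposition \ref{Prop2}, reducing where convenient to the case of even $1$-tensors.
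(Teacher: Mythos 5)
Your proposal is correct and takes essentially the same approach as the paper: identity (\ref{23}) follows at once from Proposition \ref{Prop2}, identity (\ref{24}) from the Leibniz rule for the graded commutator, and the curvature formula (\ref{22}) from the higher-derived-bracket formalism. The only difference is that you sketch the shuffle expansion of $[\cdots\lbrack\lbrack\tfrac{1}{2}[D^{\nabla},D^{\nabla}],i_{u_{1}}],i_{u_{2}}]\cdots,i_{u_{k-1}}]$ by hand, whereas the paper simply quotes Lemma 4.2 of \cite{u11}, which encapsulates precisely that computation.
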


\begin{proof}
Formula (\ref{22}) is a straightforward consequence of Lemma 4.2 of
\cite{u11}. Formula (\ref{23}) immediately follows from Proposition
\ref{Prop2}. Formula (\ref{24}) is a consequence of the Leibniz formula for
the graded commutator.
\end{proof}

\begin{corollary}
Let $\nabla$ be a left $(A,L)$-connection in $P$. If $\nabla$ is flat, then
$L^{\nabla}$ equips $\mathrm{Sym}_{A}(L,P)$ with the structure of a left
$L_{\infty}[1]$ module over the $L_{\infty}[1]$ algebra $S_{A}^{\bullet}L$.
If, in addition, $L\ $is projective and finitely generated, then the converse
is also true.
\end{corollary}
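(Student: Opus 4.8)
The plan is to read off both implications from the curvature formula (\ref{22}) of Theorem \ref{Theor1}, which expresses the curvature $J(L^{\nabla})$ of the derived connection solely in terms of the homogeneous components $\mathcal{J}_{k}^{\nabla}$ of $\mathcal{J}^{\nabla}=\tfrac{1}{2}[D^{\nabla},D^{\nabla}]$, via iterated insertions. Throughout I would use the facts recalled in this section: that $\nabla$ is flat if and only if $\mathcal{J}^{\nabla}=0$, equivalently $\mathcal{J}_{k}^{\nabla}=0$ for every $k$ (Remark \ref{Rem1}), and that each $\mathcal{J}_{k}^{\nabla}$ is a degree $2$, $\mathrm{Sym}_{A}(L,A)$-linear endomorphism of $\mathrm{Sym}_{A}(L,P)$ raising the form degree by $k-1$.

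The forward implication is immediate. If $\nabla$ is flat then every $\mathcal{J}_{k}^{\nabla}$ vanishes, so the right-hand side of (\ref{22}) is zero and $J(L^{\nabla})=0$; that is, $L^{\nabla}$ is a flat $(K,S_{A}^{\bullet}L)$-connection in $\mathrm{Sym}_{A}(L,P)$. By Definition \ref{DefLConn} applied to the $LR_{\infty}[1]$ algebra $(K,S_{A}^{\bullet}L)$ together with the example following it (identifying flat $(K,V)$-connections with left $L_{\infty}[1]$-modules over $V$), this is precisely a left $L_{\infty}[1]$-module structure on $\mathrm{Sym}_{A}(L,P)$ over $S_{A}^{\bullet}L$.

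For the converse I would invert the derived-bracket map (\ref{22}) under the hypothesis that $L$ is projective and finitely generated. Assume $J(L^{\nabla})=0$; I claim each $\mathcal{J}_{k}^{\nabla}$ vanishes. Evaluate (\ref{22}) on $u_{1}=\xi_{1},\ldots,u_{k-1}=\xi_{k-1}\in L$ and on a $0$-form $\Omega=p\in P$. Expanding the iterated commutator as a signed sum of orderings of $\mathcal{J}_{k}^{\nabla}$ and the insertions, and using $[i_{u},i_{v}]=0$ exactly as in the proof of Proposition \ref{Prop2}, every term in which some $i_{\xi_{j}}$ stands to the right of $\mathcal{J}_{k}^{\nabla}$ is killed, since a contraction annihilates the $0$-form $p$; the single surviving term is $\pm\,i_{\xi_{1}}\cdots i_{\xi_{k-1}}\mathcal{J}_{k}^{\nabla}(p)$ with nonzero coefficient, namely the total contraction, i.e. the evaluation $\mathcal{J}_{k}^{\nabla}(p)(\xi_{1},\ldots,\xi_{k-1})$ of the $(k-1)$-form $\mathcal{J}_{k}^{\nabla}(p)$. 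Hence $J(L^{\nabla})=0$ forces $\mathcal{J}_{k}^{\nabla}(p)(\xi_{1},\ldots,\xi_{k-1})=0$ for all arguments, so $\mathcal{J}_{k}^{\nabla}$ vanishes on $0$-forms. Since $L$ is projective and finitely generated one has $\mathrm{Sym}_{A}(L,P)\simeq\mathrm{Sym}_{A}(L,A)\otimes_{A}P$ (the isomorphism underlying Proposition \ref{Prop8}), so the $0$-forms generate $\mathrm{Sym}_{A}(L,P)$ over $\mathrm{Sym}_{A}(L,A)$; as $\mathcal{J}_{k}^{\nabla}$ is $\mathrm{Sym}_{A}(L,A)$-linear, it must vanish identically. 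Thus $\mathcal{J}^{\nabla}=0$ and $\nabla$ is flat.

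The only delicate step is this last inversion, and it is the expected obstacle. One must check carefully that, on a $0$-form, the iterated commutator in (\ref{22}) really collapses to a single full-contraction term with nonzero coefficient — the same bookkeeping (vanishing of contractions on $0$-forms, graded commutativity $[i_{u},i_{v}]=0$) that underlies the proof of Proposition \ref{Prop2} — and this is precisely where projectivity and finite generation of $L$ enter, in order to upgrade ``$\mathcal{J}_{k}^{\nabla}$ vanishes on $0$-forms'' to ``$\mathcal{J}_{k}^{\nabla}=0$''. Without that regularity the $0$-forms need not generate $\mathrm{Sym}_{A}(L,P)$ and the converse genuinely fails, in line with the analogous failure for ordinary left and right connections.
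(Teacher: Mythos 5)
Your argument is correct and is essentially the proof the paper intends (the corollary is stated without proof as an immediate consequence of Theorem \ref{Theor1}): flatness of $\nabla$ is equivalent to $\mathcal{J}^{\nabla}=0$, formula (\ref{22}) then kills $J(L^{\nabla})$, and conversely evaluating (\ref{22}) on arguments from $L$ and on $0$-forms collapses the iterated commutator to the full contraction of $\mathcal{J}_{k}^{\nabla}$. One caveat on your closing remark: since $\mathcal{J}_{k}^{\nabla}$ restricted to $0$-forms is, up to sign, exactly $J(\nabla)$, your evaluation already gives $J(\nabla)=0$ directly, so projectivity is only needed in your (bypassable) step upgrading vanishing on $0$-forms to vanishing of $\mathcal{J}_{k}^{\nabla}$ on all of $\mathrm{Sym}_{A}(L,P)$, and the assertion that the converse \emph{genuinely fails} without that hypothesis is not substantiated by your argument.
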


\section{Right SH LR Connections\label{SecRightConn}}

Let $(A,L)$ be an $LR_{\infty}[1]$ algebra with structure multiderivation
$\mathbb{X}$, and $Q$ an $A$-module.

\begin{definition}
\label{DefRConn}A \emph{right }$(A,L)$\emph{-connection} in $Q$ is a degree
$1$, $\mathrm{Der}^{R}P$-valued form $\Delta$, such that $(\mathbb{X}%
,\Delta)\in\widehat{\mathcal{R}}(P)$. The $\mathrm{Der}^{R}P$-valued form
$J(\Delta):=\Delta\circ X-\frac{1}{2}[\Delta,\Delta]$ is the \emph{curvature
of }$\Delta$. A right $(A,L)$-connection is \emph{flat} if the curvature
vanishes identically. An $A$-module with a flat right $(A,L)$-connection is a
\emph{right }$(A,L)$\emph{-module}.
\end{definition}

\begin{remark}
\label{RemRCurv}The curvature $J(\Delta)$ of a right $(A,L)$-connection is the
second component of the commutator
\[
\tfrac{1}{2}[(\mathbb{X},\Delta),(\mathbb{X},\Delta)]=(\mathbb{J}%
(\mathbb{X}),J(\Delta))
\]
whose first entry vanishes identically. Accordingly, the symbol of
$J(\Delta)(\xi_{1},\ldots,\xi_{k-1})$ vanishes identically, for all $\xi
_{1},\ldots,\xi_{k-1}\in L$, $k\in\mathbb{N}$, i.e., $J(\Delta)$ takes values
in $\mathrm{End}_{A}P$. Moreover, it follows from the Jacobi identity for the
Lie bracket in $\widehat{\mathcal{R}}(P)$ that $[(\mathbb{X},\Delta
),[(\mathbb{X},\Delta),(\mathbb{X},\Delta)]]=0$, i.e.,
\begin{equation}
\lbrack\Delta,J(\Delta)]+J(\Delta)\circ X=0, \label{RBianchi}%
\end{equation}
which is a \emph{higher right Bianchi identity}.
\end{remark}

In terms of the components of $\Delta$, the curvature is given by formulas
\begin{align}
J(\Delta)_{k}(\xi_{1},\ldots,\xi_{k-1}|p):=  &  \sum_{i+j=k}\sum_{\sigma\in
S_{i,j-1}}\alpha(\sigma,\boldsymbol{\xi})\,\Delta(X(\xi_{\sigma(1)},\ldots
,\xi_{\sigma(i)}),\xi_{\sigma(i+1)},\ldots,\xi_{\sigma(i+j-1)}|q)\nonumber\\
&  -\sum_{i+j=k}\sum_{\sigma\in S_{i,j-1}}(-)^{\chi}\,\alpha(\sigma
,\boldsymbol{\xi})\Delta(\xi_{\sigma(1)},\ldots,\xi_{\sigma(i)}|\Delta
(\xi_{\sigma(i+1)},\ldots,\xi_{\sigma(i+j-1)}|q)), \label{RightCurv}%
\end{align}
where $\chi=\bar{\xi}_{\sigma(1)}+\cdots+\bar{\xi}_{\sigma(i)}$, and
$\boldsymbol{\xi}=(\xi_{1},\ldots,\xi_{k-1})\in L^{\times(k-1)}$, $q\in Q$.
Notice the minus sign in front of the second summand of the right hand side of
(\ref{RightCurv}), in contrast with Formula (\ref{LeftCurv}).

The morphism $\eta^{R}$ of Proposition \ref{Prop9} maps $(\mathbb{X},\Delta)$
to a degree $1$, $\mathrm{Sym}_{A}(L,A)$-module derivation $D^{\Delta}%
=D_{1}^{\Delta}+D_{2}^{\Delta}+\cdots$ of $S_{A}^{\bullet}L\otimes_{A}Q$ with
symbol $D$. In terms of anchors and brackets, $D_{k}^{\Delta}$ is given by the
following formula
\begin{align}
D_{k}^{\Delta}(\xi_{1}\cdots\xi_{\ell}\otimes q):=  &  \sum_{\tau\in
S_{k, \ell - k}}\alpha(\tau,\boldsymbol{\xi})X(\xi_{\tau(1)},\ldots,\xi_{\tau
(k)})\xi_{\tau(k+1)}\cdots\xi_{\tau(\ell)}\otimes q\nonumber\\
&  -\sum_{\sigma\in S_{\ell-k +1,k-1}}(-)^{\chi}\alpha(\sigma,\boldsymbol{\xi
})\xi_{\sigma(1)}\cdots\xi_{\sigma(\ell-k+1)}\otimes\Delta(\xi_{\sigma
(\ell-k+2)},\ldots,\xi_{\sigma(\ell)}|q) \label{Rine}%
\end{align}
where $\chi=\bar{\xi}_{\sigma(1)}+\cdots+\bar{\xi}_{\sigma(\ell-k+1)}$,
$\xi_{1},\ldots,\xi_{\ell}\in L$, and $q\in Q$. In view of Remark \ref{Rem1},
the right $(A,L)$-connection $\Delta$ is actually equivalent to $D^{\Delta}$
and it is flat iff $\mathcal{J}^{\Delta}:=\frac{1}{2}[D^{\Delta},D^{\Delta
}]=0$. Notice that the symbol of $\mathcal{J}^{\Delta}$ vanishes identically,
i.e., $\mathcal{J}^{\Delta}$ is a degree $2$, $\mathrm{Sym}_{A}(L,A)$-linear
endomorphism of $S_{A}^{\bullet}L\otimes_{A}Q$. In terms of the components of
the curvature, $\mathcal{J}^{\Delta}$ is given by formulas
\[
\mathcal{J}_{k}^{\Delta}(\xi_{1}\cdots\xi_{\ell}\otimes q):=\sum_{\sigma\in
S_{\ell-k+1,k-1}}\alpha(\sigma,\boldsymbol{\xi})\xi_{\sigma(1)}\cdots
\xi_{\sigma(\ell-k+1)}\otimes J(\Delta)(\xi_{\sigma(\ell-k+2)},\ldots
,\xi_{\sigma(\ell)}|q),
\]
$\xi_{1},\ldots,\xi_{\ell}\in L$, and $q\in Q$.

\begin{example}
[Example \ref{ExPLR}, Part II]\label{ExPRC} As in Example \ref{ExPLR}, let
$\mathscr{P}$ be a $P_{\infty}$ algebra and $(\mathscr{P},\Omega
^{1}(\mathscr{P}))$ the associated $LR_{\infty}[1]$ algebra. It is easy to see
that there is a unique right $(\mathscr{P},\Omega^{1}(\mathscr{P}))$-module
structure $\Delta$ on $\mathscr{P}$ such that
\begin{equation}
\Delta(df_{1},\ldots,df_{k-1}|f_{k})=-\sigma_{\mathbb{X}}(df_{1}%
,\ldots,df_{k-1}|f_{k}), \label{34}%
\end{equation}
$f_{1},\ldots,f_{k}\in\mathscr{P}$. As an immediate consequence, there is an
additional differential $D^{\Delta}$ on K\"{a}hler forms $\Omega
(\mathscr{P})=S_{\mathscr{P}}^{\bullet}\Omega^{1}(\mathscr{P})$. If
$\mathscr{P}$ is the algebra of smooth functions on a \emph{graded manifold
}$\mathcal{M}$, then formula (\ref{34}) also defines a right
$(\mathscr{P},\Omega^{1}(\mathcal{M}))$-module structure on $\mathscr{P}$.
Therefore, in this case, there is an additional differential on differential
forms $\Omega(\mathcal{M})$.
\end{example}

\begin{example}
[Example \ref{ExJac}, Part II]\label{ExJacII} As in Example \ref{ExJac}, let
$\mathscr{C}$ be a $J_{\infty}$ algebra and $(\mathscr{C},J^{1}\mathscr{C})$
the associated $LR_{\infty}[1]$ algebra. It is easy to see that there is a
unique right $(\mathscr{C},J^{1}\mathscr{C})$-module structure $\Delta$ on
$\mathscr{C}$ such that
\begin{equation}
\Delta(j^{1}f_{1},\ldots,j^{1}f_{k-1}|f_{k})=-\sigma_{\mathbb{X}}(j^{1}%
f_{1},\ldots,j^{1}f_{k-1}|f_{k}),
\end{equation}
$f_{1},\ldots,f_{k}\in\mathscr{P}$. As an immediate consequence, there is a
differential $D^{\Delta}$ on $S_{\mathscr{C}}^{\bullet}J^{1}\mathscr{C}$.
\end{example}

\subsection{Higher Right Schouten-Nijenhuis Calculus}

A right linear connection in a vector bundle, i.e., a right connection along
the LR algebra of vector fields, determines a \textquotedblleft right
version\textquotedblright\ of the standard Schouten-Nijenhuis calculus. Here,
I present a homotopy generalization of it. {This allows me to provide a
homotopy version of the main result of \cite{h98} about the relation between
LR algebras and Batalin-Vilkovisky algebras. Namely, in Section \ref{SecBV} I
show that, under the existence of a suitable right connection, the $P_{\infty
}[1]$ algebra determined by a SH LR algebra is equipped with a structure of
\textquotedblleft homotopy Batalin-Vilkovisky algebra\textquotedblright\ (see
below).}

Let $(A,L)$ be an $LR_{\infty}[1]$ algebra with structure multiderivation
$\mathbb{X}$. In this section I adopt the same notation as in Section
\ref{SecLSN} about the $P_{\infty}[1]$ algebra structure on $S_{A}^{\bullet}%
L$. Consider an $A$-module $Q$ with a right $(A,L)$-connection $\Delta$, and
the corresponding $\mathrm{Sym}_{A}(L,A)$-module derivation $D^{\Delta}%
=D_{1}^{\Delta}+D_{2}^{\Delta}+\cdots$ of $S_{A}^{\bullet}L\otimes_{A}Q$.

\begin{proposition}
\label{Prop3}{for all $u_{1},\ldots,u_{k+1}\in S_{A}^{\bullet}L$,
\begin{equation}
\lbrack\cdots\lbrack\lbrack D_{k}^{\Delta},\mu_{u_{1}}],\mu_{u_{2}}]\cdots
,\mu_{u_{k}}]=\mu_{\{u_{1},\ldots,u_{k}\}}. \label{6}%
\end{equation}
In particular, the operator $D_{k}^{\Delta}$ is a differential operator of
order $k$ in the $S_{A}^{\bullet}L$-module $S_{A}^{\bullet}L\otimes_{A}Q$.}
\end{proposition}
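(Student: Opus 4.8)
The plan is to closely follow the proof of Proposition \ref{Prop2}, exchanging the insertion operators $i_u$ for the multiplication operators $\mu_u$ (multiplication by the symmetric tensor $u$) on the $S_A^\bullet L$-module $S_A^\bullet L\otimes_A Q$, and using the Rinehart formula (\ref{Rine}) for $D_k^\Delta$ in place of the left-handed formula (\ref{CEDM}). Two structural facts drive the argument. First, $S_A^\bullet L$ is graded commutative, so the multiplication operators graded commute, $[\mu_u,\mu_v]=0$; hence, by the Jacobi identity for the graded commutator, the iterated commutator $\mathcal M(u_1,\ldots,u_k):=[\cdots[[D_k^\Delta,\mu_{u_1}],\mu_{u_2}]\cdots,\mu_{u_k}]$ is graded symmetric in its arguments, and, once (\ref{6}) is proved, the same vanishing $[\mu_u,\mu_v]=0$ shows immediately that $D_k^\Delta$ is a differential operator of order $k$. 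So it suffices to establish (\ref{6}).

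Next I would reduce to the case in which each $u_i$ is a $0$-tensor (an element of $A$) or a $1$-tensor (an element of $L$). Since $\mu_{uv}=\mu_u\mu_v$ and $[\,\cdot\,,\mathcal D]$ is a graded derivation of the algebra of operators, one has $[\mu_{uv},\mathcal D]=\mu_u[\mu_v,\mathcal D]+(-)^{v\mathcal D}[\mu_u,\mathcal D]\mu_v$, the exact analogue of the Leibniz rule used in Proposition \ref{Prop2}. Feeding this into the last slot of $\mathcal M$ and inducting on the total degree $\ell_1+\cdots+\ell_k$, one checks that $\mathcal M$ is a derivation of $S_A^\bullet L$ in each argument; since the derived bracket $\{u_1,\ldots,u_k\}=\Lambda_k(u_1,\ldots,u_k)$ is, by Proposition \ref{Prop7}, itself a multiderivation of $S_A^\bullet L$, it is then enough to verify $\mathcal M(u_1,\ldots,u_k)=\mu_{\{u_1,\ldots,u_k\}}$ on generators.

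For the base case I would compute directly from (\ref{Rine}). When two of the arguments lie in $A$ both sides vanish, because the symbol $\sigma_{\mathbb X}$ is a single ($A$-order-one) derivation in its $A$-entry; the mixed case with exactly one $A$-entry is then immediate. The essential case is $u_1,\ldots,u_k\in L$, where $\{\xi_1,\ldots,\xi_k\}=X(\xi_1,\ldots,\xi_k)$ and I must show $\mathcal M(\xi_1,\ldots,\xi_k)=\mu_{X(\xi_1,\ldots,\xi_k)}$. Exactly as in Proposition \ref{Prop2}, graded symmetry lets me set all $\xi_i$ equal to a single even $\xi$, write $\mathcal M(\xi^k)=\sum_{j=0}^k(-)^{k-j}\binom{k}{j}\mu_\xi^{k-j}D_k^\Delta\mu_\xi^j$ as in (\ref{20}), apply this to a generic tensor $\zeta_1\cdots\zeta_m\otimes q$, expand via (\ref{Rine}), and collapse the resulting double sums using the binomial identity (\ref{21}).

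I expect the only real obstacle to be this final sign-and-binomial bookkeeping, which will again split into two regimes (the analogues of the $k\le r$ and $k>r$ cases of Proposition \ref{Prop2}, here according to how $k$ compares with the tensor degree): one must check that the terms not proportional to $X(\xi^k)$ cancel and that the surviving sign is $+1$, in contrast with the $-(-)^k$ of the left-handed identity (\ref{5}). This discrepancy is exactly accounted for by the relative minus sign in front of the $\Delta$-term of (\ref{Rine}) (cf. the minus sign in (\ref{RightCurv}) versus (\ref{LeftCurv})).
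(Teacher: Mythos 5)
Your proposal is correct and follows essentially the same route as the paper's own proof: reduce to $0$- and $1$-tensor arguments via the Leibniz rule for $\mu_{uv}$, use $[\mu_u,\mu_v]=0$ to get graded symmetry of the iterated commutator and the order-$k$ claim, dispose of the cases with one or two arguments from $A$, and settle the all-$L$ case by the same binomial computation as in Proposition \ref{Prop2} (the paper itself only sketches this last step by referring back to that proof). No substantive differences.
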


\begin{proof}
{First of all, notice that, since $[\mu_{u},\mu_{v}]=0$ for all tensors $u,v$,
then (\ref{6}) implies that $D_{k}^{\Delta}$ is a differential operator of
order $k$, as claimed. Now I prove (\ref{6}).} Suppose, preliminarly, that
identity (\ref{6}) is true when $u_{1},\ldots,u_{k}$ are either from $A$ or
from $L$. Now, let $u_{i}\in S_{A}^{\ell_{i}}L$, $i=1,\ldots,k$. Since
$[\mathcal{D},\mu_{uv}]=[\mathcal{D},\mu_{u}]\mu_{u}+(-)^{\mathcal{D}u}\mu
_{u}[\mathcal{D},\mu_{v}]$, for all symmetric tensors $u,v$ and any graded
$K$-linear endomorphism $\mathcal{D}$ of $Q\otimes_{A}S_{A}^{\bullet}L$, the
general assertion (\ref{6}) follows from an easy induction on the $\ell_{i}$.
It remains to prove (\ref{6}) when $u_{1},\ldots,u_{k}$ are either $0$ or $1$-tensors.

Put $\mathcal{M}(u_{1},\ldots,u_{k}):=[\cdots\lbrack\lbrack D_{k}^{\Delta}%
,\mu_{u_{1}}],\mu_{u_{2}}]\cdots,\mu_{u_{k}}]$. Since $[\mu_{u},\mu_{v}]=0$
for all $u,v\in S_{A}^{\bullet}L$, then, in view of the Jacobi identity for
the graded commutator, $\mathcal{M}$ is graded symmetric in its arguments.
Moreover, since $D_{k}^{\Delta}$ is a $\mathrm{Sym}_{A}(L,A)$-module
derivation subordinate to $D_k$, $\mathcal{M}$ vanishes whenever two arguments
are from $A$. Indeed, let $a,b\in A$. Then
\[
\lbrack\lbrack D_{k}^{\Delta},\mu_{a}],\mu_{b}]=[[D_{k}^{\Delta},i_{a}%
],i_{b}]=0.
\]
Now, let $a\in A$, and let $\xi_{1},\ldots,\xi_{k-1}\in L$. Using $[\mu_{\xi
},i_{\omega}]=-i_{i_{\xi}\omega}$ for all forms $\omega$ it is easy to see
that
\[
\mathcal{M}(a,\xi_{1},\ldots,\xi_{k-1}){}=\mu_{\{a,\xi_{1},\ldots,\xi_{k-1}%
\}}.
\]
Finally, I have to show that
\[
\mathcal{M}(\xi_{1},\ldots,\xi_{k}){}=\mu_{\{\xi_{1},\ldots,\xi_{k}\}}%
,\quad\xi_{1},\ldots,\xi_{k}\in L.
\]
This follows from analogous straightforward computations as those in the proof
of Proposition \ref{Prop2}.
\end{proof}

For $u_{1},\ldots,u_{k-1}\in S_{A}^{\bullet}L$, and $U\in S_{A}^{\bullet
}L\otimes_{A}Q$, put
\[
R^{\Delta}(u_{1},\ldots,u_{k-1}|U):=[\cdots\lbrack\lbrack D_{k}^{\Delta}%
,\mu_{u_{1}}],\mu_{u_{2}}]\cdots,\mu_{u_{k-1}}]U.
\]

The following theorem can be proved exactly as Theorem \ref{Theor1}.

\begin{theorem}
The sum $R^{\Delta}:=R_{1}^{\Delta}+R_{2}^{\Delta}+\cdots$ is a right
$(K,S_{A}^{\bullet}L)$-connection in $S_{A}^{\bullet}L\otimes_{A}Q$ whose
curvature $J(R^{\Delta})$ is given by%
\begin{equation}
J(R^{\Delta})(u_{1},\ldots,u_{k-1}|U)=[\cdots\lbrack\lbrack\mathcal{J}%
_{k}^{\Delta},\mu_{u_{1}}],\mu_{u_{2}}]\cdots,\mu_{u_{k-1}}]U. \label{25}%
\end{equation}
Moreover,
\begin{equation}
R^{\Delta}(u_{1},\ldots,u_{k-1}|\mu_{u}U)=\mu_{\{u_{1},\ldots,u_{k-1}%
,u\}}U+(-)^{\chi}\mu_{u}R^{\Delta}(u_{1},\ldots,u_{k-1}|U), \label{26}%
\end{equation}
where $\chi=\bar{u}(\bar{u}_{1}+\cdots+\bar{u}_{k-1}+1)$, and
\[
R^{\Delta}(uu_{1},u_{2},\ldots,u_{k-1}|U)=(-)^{u}\mu_{u}R^{\Delta}%
(u_{1},\ldots,u_{k-1}|U)+(-)^{\chi^{\prime}}R^{\Delta}(u,u_{2}\ldots
,u_{k-1}|\mu_{u_{1}}U)
\]

where $\chi^{\prime}=\bar{u}_{1}(\bar{u}_{2}+\cdots+\bar{u}_{k-1})$, for all
tensors $u,u_{1},\ldots,u_{k}$, and all $U\in S_{A}^{\bullet}L\otimes_{A}Q$.
\end{theorem}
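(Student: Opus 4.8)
The plan is to mirror, line by line, the proof of Theorem \ref{Theor1}, replacing the insertion operators $i_u$ by the multiplication operators $\mu_u$ throughout. The three assertions --- that $R^{\Delta}$ is a right $(K,S_{A}^{\bullet}L)$-connection with curvature (\ref{25}), that it obeys the module--Leibniz rule (\ref{26}), and that it obeys the final product rule --- come respectively from Voronov's higher-derived-bracket lemma (Lemma 4.2 of \cite{u11}), from Proposition \ref{Prop3}, and from the graded Leibniz rule for the commutator.

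First I would dispose of the connection and curvature claim. Here the base algebra is $K$, so all symbols vanish and being a right $(K,S_{A}^{\bullet}L)$-connection amounts only to $R^{\Delta}$ being $K$-multilinear, graded symmetric in its first $k-1$ entries, and of degree $1$. This is precisely the output of the higher-derived-bracket construction of \cite{u11}: the operators $\mu_u$ commute, $[\mu_u,\mu_v]=0$, so they span an abelian subalgebra of $\mathrm{End}_K(S_A^{\bullet}L\otimes_A Q)$, and iterating the commutator of $D^{\Delta}=D_1^{\Delta}+D_2^{\Delta}+\cdots$ against them yields a family of maps whose failure of flatness is governed by $\mathcal{J}^{\Delta}=\tfrac12[D^{\Delta},D^{\Delta}]$; reading off $k$-entry components gives (\ref{25}). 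Graded symmetry in the $u_i$ follows from the graded Jacobi identity exactly as in the proof of Proposition \ref{Prop3}. The one point requiring care is the right-handed sign convention: the absence in the definition of $R^{\Delta}$ of the compensating factor $-(-)^{k}$ present in the definition of $L^{\nabla}$ is exactly what converts the output from a left module (Formula (\ref{Jac1})) into a right module (Formula (\ref{Jac2})), with its characteristic minus sign. I regard checking that these signs propagate correctly as the only genuine obstacle, but it is the same bookkeeping already carried out in the left-handed case.

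Next, (\ref{26}) is immediate from Proposition \ref{Prop3}. Writing $C:=[\cdots[[D_k^{\Delta},\mu_{u_1}],\ldots],\mu_{u_{k-1}}]$, one has $R^{\Delta}(u_1,\ldots,u_{k-1}|\mu_u U)=C\mu_u U$, while the $k$-fold nested commutator $[C,\mu_u]$ equals $\mu_{\{u_1,\ldots,u_{k-1},u\}}$ by (\ref{6}). Since $\bar C=1+\bar u_1+\cdots+\bar u_{k-1}$, rearranging $C\mu_u=[C,\mu_u]+(-)^{\bar C\bar u}\mu_u C$ reproduces (\ref{26}) with $\chi=\bar u(\bar u_1+\cdots+\bar u_{k-1}+1)$. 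Finally, the last displayed identity follows by expanding the commutator via its graded Leibniz rule together with the multiplicativity $\mu_{uu_1}=\mu_u\mu_{u_1}$, and tracking the Koszul signs; no new input beyond Proposition \ref{Prop3} and the commutator calculus is needed.
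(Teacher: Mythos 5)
Your proposal is correct and follows essentially the same route as the paper: the paper proves this theorem "exactly as Theorem \ref{Theor1}", i.e.\ by deriving the curvature formula from Lemma 4.2 of \cite{u11}, the identity (\ref{26}) from Proposition \ref{Prop3}, and the final product rule from the graded Leibniz rule for the commutator, which is precisely your three-step argument with $i_u$ replaced by $\mu_u$. Your additional remarks on the sign bookkeeping (the missing $-(-)^{k}$ factor accounting for the left/right distinction) and the explicit commutator rearrangement for (\ref{26}) only make explicit what the paper leaves implicit.
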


\begin{corollary}
Let $\Delta$ be a right $(A,L)$-connection in $Q$. Then $\Delta$ is flat iff
$R^{\Delta}$ equips $S_{A}^{\bullet}L\otimes_{A}Q$ with the structure of a
right $L_{\infty}[1]$ module over the $L_{\infty}[1]$ algebra $S_{A}^{\bullet
}L$.
\end{corollary}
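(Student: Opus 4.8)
The plan is to reduce the statement to the flatness characterization already recorded in this section together with the curvature formula (\ref{25}) of the preceding theorem. First I would observe that, reading Definition \ref{DefRConn} with $(A,L)$ replaced by $(K,S_{A}^{\bullet}L)$, equipping $S_{A}^{\bullet}L\otimes_{A}Q$ with a right $L_{\infty}[1]$-module structure over $S_{A}^{\bullet}L$ is exactly the same as demanding that the right $(K,S_{A}^{\bullet}L)$-connection $R^{\Delta}$ be flat, i.e. that its curvature $J(R^{\Delta})$ vanish identically. Since the preceding theorem already guarantees that $R^{\Delta}$ is a right connection and computes its curvature, the corollary reduces to the equivalence $\Delta$ flat $\Longleftrightarrow J(R^{\Delta})=0$. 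Recall moreover that, as noted after (\ref{Rine}), $\Delta$ is flat precisely when $\mathcal{J}^{\Delta}=0$, equivalently when every component $\mathcal{J}_{k}^{\Delta}$ vanishes, and that each $\mathcal{J}_{k}^{\Delta}$ is built entirely out of the curvature form $J(\Delta)$ via the explicit formula displayed above.

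The forward implication is then immediate. If $\Delta$ is flat, then $J(\Delta)=0$, so every $\mathcal{J}_{k}^{\Delta}=0$ by its explicit expression, and (\ref{25}) shows that all iterated commutators defining $J(R^{\Delta})$ vanish; hence $R^{\Delta}$ is flat and so is a right $L_{\infty}[1]$-module structure. For the converse I would exploit that the derived-bracket construction recovers $J(\Delta)$ from $J(R^{\Delta})$ upon evaluation on tensors of degree zero. Concretely, take $u_{1}=\xi_{1},\ldots,u_{k-1}=\xi_{k-1}\in L$ and $U=1\otimes q$ in (\ref{25}) and expand the nested commutator $[\cdots[[\mathcal{J}_{k}^{\Delta},\mu_{\xi_{1}}],\ldots],\mu_{\xi_{k-1}}]$ into its signed sum of terms $\pm\,\mu_{\xi_{a_{1}}}\cdots\mu_{\xi_{a_{p}}}\,\mathcal{J}_{k}^{\Delta}\,\mu_{\xi_{b_{1}}}\cdots\mu_{\xi_{b_{r}}}$ with $p+r=k-1$. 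Since $\mathcal{J}_{k}^{\Delta}$ lowers the symmetric degree by $k-1$, while $\mu_{\xi_{b_{1}}}\cdots\mu_{\xi_{b_{r}}}(1\otimes q)=\xi_{b_{1}}\cdots\xi_{b_{r}}\otimes q$ has degree $r\le k-1$, only the term with $r=k-1$ (all multiplications to the right, hence $p=0$) survives. Using the explicit formula for $\mathcal{J}_{k}^{\Delta}$ with $S_{0,k-1}=\{\mathrm{id}\}$ one gets $\mathcal{J}_{k}^{\Delta}(\xi_{1}\cdots\xi_{k-1}\otimes q)=1\otimes J(\Delta)(\xi_{1},\ldots,\xi_{k-1}|q)$, so that
\[
J(R^{\Delta})(\xi_{1},\ldots,\xi_{k-1}|1\otimes q)=1\otimes J(\Delta)(\xi_{1},\ldots,\xi_{k-1}|q),
\]
up to an overall Koszul sign. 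Therefore $J(R^{\Delta})=0$ forces $J(\Delta)=0$ for every $k$, i.e. $\Delta$ is flat.

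The only delicate point, and the part I expect to require care, is the bookkeeping in this last step: one must confirm by the degree count above that every term in the expansion of the iterated commutator other than the single degree-matching one annihilates $1\otimes q$, and then track the Koszul signs produced when commuting the $\mu_{\xi_{i}}$ past $\mathcal{J}_{k}^{\Delta}$. Neither issue affects the argument, however, since all I need is that the vanishing of the left-hand side be equivalent to the vanishing of $J(\Delta)$, and the precise sign is immaterial for that. Everything else is a direct appeal to the preceding theorem, to the curvature formula (\ref{25}), and to the identity ``$\Delta$ flat $\Leftrightarrow \mathcal{J}^{\Delta}=0$'' already established.
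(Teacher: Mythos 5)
Your argument is correct and is exactly the derivation the paper intends: the corollary is stated without proof as an immediate consequence of the preceding theorem, namely that flatness of $\Delta$ (i.e.\ $\mathcal{J}^{\Delta}=0$, equivalently $J(\Delta)=0$) is equivalent, via formula (\ref{25}), to the vanishing of $J(R^{\Delta})$, which is what a right $L_{\infty}[1]$-module structure means. Your recovery of $J(\Delta)$ by evaluating on $U=1\otimes q$ is the right mechanism and correctly explains why, unlike the left-handed corollary, no projectivity or finite-generation hypothesis on $L$ is needed here.
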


\begin{remark}
Beware that, although right Schouten-Nijenhuis calculus looks formally similar
to left one, the two are actually different. The main difference is that they
live on different spaces: a space of forms for left calculus, and a space of
tensors for right calculus. This is why, for instance, Proposition
\ref{PropBV} in the next section does not have a \textquotedblleft left
calculus\textquotedblright\ analogue.
\end{remark}

\subsection{Right $(A,L)$-Module Structures on $A$\label{SecBV}}

Let $(A,L)$ be an ordinary Lie-Rinehart algebra. Then the exterior algebra
$\Lambda_{A}^{\bullet}L$ of $L$ is equipped with a Gerstenhaber algebra
structure. In \cite{h98} Huebschmann showed that a right $(A,L)$-module
structure $\Delta$ on $A$ determines the structure of a
\emph{Batalin-Vilkovisky }(\emph{BV})\emph{ algebra} on $\Lambda_{A}^{\bullet
}L$. Recall that a BV algebra is an associative, graded commutative, unital
algebra $B$ equipped with a degree $1$, second order differential operator
$\square:B\longrightarrow B$ such that $\square1=0$, and $\square^{2}=0$. Any BV algebra is naturally equipped with a Gerstenhaber bracket determined by $\square$ via the \emph{derived bracket
formula}
\[
\lbrack b_{1},b_{2}]:=[\square,b_{1}],b_{2}]1,\quad b_{1},b_{2}\in B.
\]
In \cite{h98} Huebschmann showed that the Rinehart operator $D^{\Delta}$
associated to a right $(A,L)$-module structure $\Delta$ on $A$ generates the
Gerstenhaber bracket $[{}\cdot\emph{{}},{}\cdot{}]$ of $\Lambda_{A}^{\bullet
}L$ in the sense that $(\Lambda_{A}^{\bullet}L,D^{\Delta},[{}\cdot\emph{{}}%
,{}\cdot{}])$ is a BV algebra. Huebschmann's result has a homotopy analogue.
To show this, I first recall a homotopy analogue of a BV algebra. Let
$\mathscr{B}$ be an associative, graded commutative, unital algebra and
$\square:\mathscr{B}\longrightarrow\mathscr{B}$ any degree $1$, $K$-linear
map. Define operations in $\mathscr{B}$ via the \emph{higher derived bracket
formulas }\cite{k00,v05}%
\begin{equation*}
\Lambda^\prime_{k}(u_{1},u_{2},\ldots,u_{k})=[\cdots\lbrack\lbrack\square
,u_{1}],u_{2}]\cdots,u_{k}]1,\quad k\in\mathbb{N}.%
\end{equation*}
The $\Lambda^\prime_{k}$'s are graded symmetric. In \cite{bda96} it was proved for the
first time that, when $\square1=0$ and $\square^{2}=0$, the $\Lambda^\prime_{k}$
equips $\mathscr{B}$ with the structure of an $L_{\infty}[1]$ algebra.
Now, suppose that $\square = \square_1 + \square_2 + \cdots$, with $\square_k$ a degree $1$, differential operator of order $k$, and, moreover, 
\begin{equation}\label{Eq_BV}
\square_i 1 = 0, \quad \text{and} \quad \sum_{i+j = k}[\square_i, \square_j] = 0
\end{equation}
(so that $\square 1= \square^2 = 0$ as above), and put
\begin{equation}
\Lambda_{k}(u_{1},u_{2},\ldots,u_{k})=[\cdots\lbrack\lbrack\square_k
,u_{1}],u_{2}]\cdots,u_{k}]1,\quad k\in\mathbb{N}. \label{9}
\end{equation}

The $\Lambda_k$'s are different from the $\Lambda^\prime_k$'s in general. However, they also equip $\mathscr{B}$ with the structure of an $L_\infty [1]$-algebra. Moreover, they are multiderivations. So, if one puts $\Lambda=\Lambda_{1}+\Lambda_{2}+\cdots$, then $(\mathscr{B},\Lambda)$ is a $P_{\infty}[1]$
algebra. If all the $\square_i$'s vanish but the second one,
then $(\mathscr{B},\square_2)$ is a BV algebra. In the general case, one can give the following definition \cite{bl13} (see also \cite{k00,bv14}).

\begin{definition}
A $BV_{\infty}$\emph{ algebra} is an associative, graded commutative, unital
algebra $\mathscr{B}$ equipped with a family of degree $1$ differential operators $\square_k : \mathscr{B} \longrightarrow \mathscr{B}$ of order $k = 1, 2, \ldots$ such that (\ref{Eq_BV}) hold.
\end{definition}


\begin{proposition}
\label{PropBV}Let $(A,L)$ be an $LR_\infty [1]$-algebra and $\Delta$ a flat, right $(A,L)$-connection in $A$. Then the operators $D_k^\Delta$ build up a $BV_\infty$ algebra structure generating the $P_\infty [1]$ brackets in $S^\bullet_A L$, i.e.
\begin{equation}\label{brackets}
\{u_{1},\ldots,u_{k}\}=[\cdots\lbrack\lbrack D_{k}^{\Delta},\mu_{u_{1}}%
],\mu_{u_{2}}]\cdots,\mu_{u_{k}}]1.
\end{equation}
for all $u_1,\ldots, u_k \in S^\bullet_A L$.
\end{proposition}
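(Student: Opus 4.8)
The plan is to obtain the whole $BV_\infty$ structure by specializing Proposition~\ref{Prop3} to $Q=A$, so that the ambient algebra is $\mathscr{B}=S_A^\bullet L\otimes_A A\cong S_A^\bullet L$ and $\mu_u$ is ordinary multiplication by $u\in S_A^\bullet L$, with unit $1\in S_A^0 L$. The generation identity (\ref{brackets}) is then almost immediate: Proposition~\ref{Prop3} gives the operator equality $[\cdots[[D_k^\Delta,\mu_{u_1}],\mu_{u_2}]\cdots,\mu_{u_k}]=\mu_{\{u_1,\ldots,u_k\}}$, and evaluating both sides on $1$ and using $\mu_v 1=v$ yields $[\cdots[[D_k^\Delta,\mu_{u_1}],\ldots],\mu_{u_k}]1=\{u_1,\ldots,u_k\}$. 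Hence the higher derived brackets of the family $\{D_k^\Delta\}$ coincide with the $P_\infty[1]$ brackets $\{\,\cdot\,\}$ of $S_A^\bullet L$ supplied by $\nu(\mathbb{X})$, which is exactly the asserted statement.

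It then remains to check that, with $\square_k:=D_k^\Delta$, the three conditions in the definition of a $BV_\infty$ algebra hold. That each $D_k^\Delta$ is a degree $1$ differential operator of order $k$ on $\mathscr{B}$ is the ``in particular'' clause of Proposition~\ref{Prop3} (the $(k+1)$-fold commutator against multiplications vanishes since $[\mu_u,\mu_v]=0$). The quadratic relations $\sum_{i+j=k}[D_i^\Delta,D_j^\Delta]=0$ I would extract from flatness: by the remark following (\ref{Rine}), $\Delta$ is flat iff $\mathcal{J}^\Delta=\tfrac12[D^\Delta,D^\Delta]=0$. Since $D_m^\Delta$ lowers the symmetric tensor degree by $m-1$, the bracket $[D_i^\Delta,D_j^\Delta]$ lowers it by $i+j-2$; collecting $\tfrac12[D^\Delta,D^\Delta]$ by the total index $k=i+j$ exhibits $\tfrac12\sum_{i+j=k}[D_i^\Delta,D_j^\Delta]$ as its homogeneous component in degree $-(k-2)$, and these components vanish independently, which is precisely (\ref{Eq_BV}).

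The one step needing separate attention is the normalization $D_k^\Delta 1=0$. For $k\ge 2$ it is automatic: feeding $1\otimes 1\in S_A^0 L\otimes_A A$ into the Rinehart-type formula (\ref{Rine}) leaves both shuffle sums empty (they require $\ell\ge k$ and $\ell\ge k-1$ respectively, while here $\ell=0$), so the expression is zero. For $k=1$ the formula collapses to $D_1^\Delta 1=-\Delta_1 1$, which vanishes exactly when the $1$-component $\Delta_1$ of the connection is a genuine derivation of $A$, equivalently $\Delta_1=-\sigma_{\mathbb{X}_1}$. This is the homotopy counterpart of the classical vanishing $\Delta_1=0$ (in the classical case $\mathbb{X}_1=0$, so the issue never arises), and it holds for the connections of interest, e.g. the canonical one of Example~\ref{ExPRC}; I would therefore record $\Delta_1 1=0$ as the standing normalization under which $\square_1$ is a derivation, as a $BV$ datum should be.

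I expect the genuine obstacle to be concentrated in this last point: the $k=1$ unit condition is where the homotopy setting really departs from Huebschmann's classical statement, and one must make sure the chosen flat right connection is normalized so that $\Delta_1$ annihilates the unit (without it, $\square_1$ fails to be a derivation). The remaining care is purely bookkeeping — splitting the single equation $[D^\Delta,D^\Delta]=0$ into the infinite family (\ref{Eq_BV}) by matching symmetric degrees — while the order-$k$ property and the bracket formula reduce to quoting Proposition~\ref{Prop3} and require no fresh computation.
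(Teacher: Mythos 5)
Your argument is essentially the paper's own proof: the order-$k$ property and the generating identity (\ref{brackets}) are obtained by quoting Proposition \ref{Prop3} and evaluating its operator identity on $1$, and the quadratic relations in (\ref{Eq_BV}) come from splitting $\mathcal{J}^{\Delta}=\tfrac12[D^{\Delta},D^{\Delta}]=0$ into its homogeneous components with respect to the symmetric degree. The one point where you go beyond the printed proof is the unit condition: the paper simply asserts $D^{\Delta}1=0$, whereas you correctly observe that $D_{k}^{\Delta}1=0$ is automatic for $k\geq 2$ (the target $S_{A}^{1-k}L\otimes_{A}A$ is zero) but that $D_{1}^{\Delta}1=-\Delta_{1}(1)$ is an extra normalization on the flat right connection, not a consequence of Definition \ref{DefRConn} (an element of $\mathrm{Der}^{R}A$ with symbol $-\sigma_{\mathbb{X}_{1}}$ is determined only up to right multiplication by $\Delta_{1}(1)$, and flatness alone does not kill this term). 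Your suggestion to record $\Delta_{1}(1)=0$ as a standing hypothesis — satisfied in the examples, e.g.\ Example \ref{ExPRC} — is a legitimate sharpening of the statement rather than a defect of your proof.
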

\begin{proof}
First recall that the $D_k^\Delta$'s are $k$-th order differential operators. Properties (\ref{Eq_BV}) follow from $D^\Delta 1 = (D^\Delta)^2 = 0$. Finally, one immediately gets (\ref{brackets}) by applying identity (\ref{6}) to $1 \in S_{A}^{\bullet}L$.
\end{proof}

\begin{example}
Let $V$ be an $L_{\infty}$ algebra. Then $(A,L):=(K,V[1])$ is an $LR_{\infty
}[1]$ algebra. Braun and Lazarev (see \cite{bl13} Example 3.12) have recently remarked that there
is a canonical $BV_{\infty}$ algebra structure on $S_{A}^{\bullet}%
L=S^{\bullet}V[1]$ (see also \cite{bv14} where functoriality issues are discussed in details). It is easy to see that the latter coincides with the
$BV_{\infty}$ algebra determined by the trivial right $(A,L)$-module structure
on $A=K$.
\end{example}

\begin{example}
[Example \ref{ExPLR} Part III]Differential forms on a Poisson manifold is
naturally equipped with a $BV$ algebra structure (see, for instance,
\cite{x99}). There is a homotopy analogue of this fact. Namely, as in Example
\ref{ExPLR}, let $\mathscr{P}$ be a $P_{\infty}$ algebra and
$(\mathscr{P},\Omega^{1}(\mathscr{P}))$ the associated $LR_{\infty}[1]$
algebra. Since $\mathscr{P}$ is a right $(\mathscr{P},\Omega^{1}%
(\mathscr{P}))$-module (see Example \ref{ExPRC}), then $\Omega(\mathscr{P})$
is a $BV_{\infty}$ algebra. Similarly, if $\mathscr{P}$ is the algebra of
smooth functions on a \emph{graded manifold }$\mathcal{M}$, then
$\Omega(\mathcal{M})$ is a $BV_{\infty}$ algebra. When $\mathcal{M}$ is
non-graded, this $BV_{\infty}$ algebra coincides with that in Definition 4.2
of \cite{bl13}.
\end{example}

\begin{example}
[Example \ref{ExJac} Part III]The skew-symmetric algebra of $1$-jets of
functions on a Jacobi manifold is naturally equipped with a $BV$ algebra
structure (see, for instance, \cite{v00}). There is a homotopy (and purely
algebraic) analogue of this fact. Namely, as in Example \ref{ExJac}, let
$\mathscr{C}$ be a $J_{\infty}$ algebra and $(\mathscr{C},J^{1}\mathscr{C})$
the associated $LR_{\infty}[1]$ algebra. Since $\mathscr{C}$ is a right
$(\mathscr{C},J^{1}\mathscr{C})$-module (see Example \ref{ExJacII}), then
$S_{\mathscr{C}}^{\bullet}J^{1}\mathscr{C}$ is a $BV_{\infty}$ algebra.
\end{example}

\section{Derivative Representations up to Homotopy\label{SecDerRep}}

As recalled in Section \ref{SecAct} a Lie algebra may act on a manifold. More
generally, a Lie algebra $\mathfrak{g}$ may act on a vector bundle
$E\longrightarrow M$ by infinitesimal vector bundle automorphisms (not
necessarily base-preserving). Following Kosmann-Schwarzbach \cite{k-s76}, I
call such an action a \emph{derivative representation of} $\mathfrak{g}$. A
derivative representation of $\mathfrak{g}$ determines an action of
$\mathfrak{g}$ on $M$. Kosmann-Schwarzabach and Mackenzie showed that
derivative representations of $\mathfrak{g}$ inducing a given action on the
base are equivalent to representations of the transformation Lie algebroid
$\mathfrak{g}\ltimes M$ \cite{k-sm02}. They also define \emph{derivative
representations of Lie algebroids} and prove an analogous result in this
general context. In this section, I show how to generalize the notion of
derivative representation of a Lie algebroid \cite{k-sm02} to the general
context of SH LR algebras and prove a general version of the
Kosmann-Schwarzbach and Mackenzie theorem. Moreover, I define \emph{right
derivative representations }and extend the result to them.

Let $(A,L)$ be an $LR_{\infty}[1]$ algebra, and $\mathcal{A}$ be an
associative, graded commutative, unital $A$-algebra with a pre-action $\nabla$
of $(A,L)$. Moreover, let $\mathcal{P}$ be a graded, left $\mathcal{A}$-module.

\begin{definition}
\label{DefLDR}A \emph{left derivative pre-representation of }$(A,L)$ \emph{on
}$\mathcal{P}$ \emph{subordinate to }$\nabla$ is a left $(A,L)$-connection
$\nabla^{\mathcal{P}}$ in $\mathcal{P}$ such that, for all $\xi_{1},\ldots
,\xi_{k-1}\in L$, and $k\in\mathbb{N}$, the pair
\[
(\nabla^{\mathcal{P}}(\xi_{1},\ldots,\xi_{k-1}),\nabla(\xi_{1},\ldots
,\xi_{k-1}))
\]
is an $\mathcal{A}$-module derivation of $\mathcal{P}$, i.e.,
\[
\nabla^{\mathcal{P}}(\xi_{1},\ldots,\xi_{k-1}|{}fp)=(-)^{\chi}f\nabla
^{\mathcal{P}}(\xi_{1},\ldots,\xi_{k-1}|{}p)+\nabla(\xi_{1},\ldots,\xi
_{k-1}|{}f{})\,p\,,
\]
where $\chi=(\bar{\xi}_{1}+\cdots+\bar{\xi}_{k-1}+1)\bar{f}$, for all
$f\in\mathcal{A}$, and $p\in\mathcal{P}$. If $\nabla$ is an action, then
$\nabla^{\mathcal{P}}$ is a \emph{left derivative representation} if it is flat.
\end{definition}

With a straightforward computation, one can check the following

\begin{proposition}
\label{Prop4}Let $\nabla^{\mathcal{P}}$ be a left $(A,L)$-connection in
$\mathcal{P}$. Then $\nabla^{\mathcal{P}}$ is a left derivative
pre-representation subordinate to $\nabla$ iff $(D^{\nabla^{\mathcal{P}}%
},D^{\nabla})$ is a left $\mathrm{Sym}_{A}(L,\mathcal{A})$-module derivation
of $\mathrm{Sym}_{A}(L,\mathcal{P})$.
\end{proposition}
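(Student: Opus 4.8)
The plan is to unwind both sides through the explicit Chevalley--Eilenberg formula (\ref{CEDM}) and to reduce the module-derivation identity to the pointwise condition of Definition \ref{DefLDR}. First I recall the setup: by Proposition \ref{Prop8} (applied once with $P=\mathcal{P}$ and once with $P=\mathcal{A}$), the connections $\nabla^{\mathcal{P}}$ and $\nabla$ correspond to $\mathrm{Sym}_{A}(L,A)$-module derivations $D^{\nabla^{\mathcal{P}}}$ of $\mathrm{Sym}_{A}(L,\mathcal{P})$ and $D^{\nabla}$ of $\mathrm{Sym}_{A}(L,\mathcal{A})$, both with the same symbol $\eta(\mathbb{X})$. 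Since $\nabla$ is a pre-action, Proposition \ref{Prop1} guarantees that $D^{\nabla}$ is in fact a degree $1$ derivation of the \emph{algebra} $\mathrm{Sym}_{A}(L,\mathcal{A})$, so that the assertion ``$(D^{\nabla^{\mathcal{P}}},D^{\nabla})$ is a $\mathrm{Sym}_{A}(L,\mathcal{A})$-module derivation of $\mathrm{Sym}_{A}(L,\mathcal{P})$'' is meaningful and amounts to the Leibniz rule
\[
D^{\nabla^{\mathcal{P}}}(\omega\,\Omega)=(-)^{\omega}\,\omega\,D^{\nabla^{\mathcal{P}}}(\Omega)+(D^{\nabla}\omega)\,\Omega,
\]
for every $\mathcal{A}$-valued form $\omega$ and every $\mathcal{P}$-valued form $\Omega$, the product of forms being the graded symmetric (unshuffle) product induced by the module map $\mathcal{A}\times\mathcal{P}\longrightarrow\mathcal{P}$.

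The converse implication is immediate. Assuming the Leibniz rule, take $\omega=f\in\mathcal{A}$ and $\Omega=p\in\mathcal{P}$ to be $0$-forms. For $0$-forms the second (bracket) term of (\ref{CEDM}) drops out, so $D^{\nabla^{\mathcal{P}}}(p)(\xi_{1},\ldots,\xi_{k-1})=\nabla^{\mathcal{P}}(\xi_{1},\ldots,\xi_{k-1}|p)$ and $(D^{\nabla}f)(\xi_{1},\ldots,\xi_{k-1})=\nabla(\xi_{1},\ldots,\xi_{k-1}|f)$. Evaluating the Leibniz identity for $f\,p$ on $(\xi_{1},\ldots,\xi_{k-1})$ and reading off the $k$-entry component reproduces exactly the defining identity of Definition \ref{DefLDR}, with the sign $\chi=(\bar{\xi}_{1}+\cdots+\bar{\xi}_{k-1}+1)\bar{f}$ coming from commuting the degree $1$ operator past $f$ inside the product.

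For the forward implication I would evaluate the Leibniz identity on an arbitrary tuple $(\zeta_{1},\ldots,\zeta_{N})\in L^{\times N}$ and expand $D^{\nabla^{\mathcal{P}}}(\omega\,\Omega)$ by (\ref{CEDM}). The formula splits into an anchor term, involving $\nabla^{\mathcal{P}}(\cdots|(\omega\,\Omega)(\cdots))$, and a bracket term, involving $(\omega\,\Omega)(X(\cdots),\cdots)$. On the anchor term I apply the derivative property of Definition \ref{DefLDR} to the product $\omega(\cdots)\cdot\Omega(\cdots)\in\mathcal{P}$: since each $\nabla^{\mathcal{P}}(\cdots)$ is an $\mathcal{A}$-module derivation with symbol $\nabla(\cdots)$, this produces two summands, one proportional to $\omega(\cdots)\,\nabla^{\mathcal{P}}(\cdots|\Omega(\cdots))$ and one proportional to $\nabla(\cdots|\omega(\cdots))\,\Omega(\cdots)$. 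After reorganizing the unshuffle sums, the first regroups into the anchor part of $(-)^{\omega}\omega\,D^{\nabla^{\mathcal{P}}}\Omega$ and the second into the anchor part of $(D^{\nabla}\omega)\,\Omega$; the bracket term distributes, by the same unshuffle bookkeeping, into the bracket parts of $(-)^{\omega}\omega\,D^{\nabla^{\mathcal{P}}}\Omega$ and $(D^{\nabla}\omega)\,\Omega$.

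The main obstacle is purely combinatorial: matching the Koszul signs produced by (\ref{CEDM}) and by the derivative property against those carried by the unshuffle product $\omega\,\Omega$, and checking that the $(\ell,m)$-unshuffle sums split consistently so that the anchor and bracket contributions reassemble into the two terms on the right-hand side. This is exactly the ``careful but straightforward computation'' already invoked in the proofs of Propositions \ref{Prop8} and \ref{Prop9}; once those sign conventions are fixed, no new idea is required beyond the derivative property of Definition \ref{DefLDR}.
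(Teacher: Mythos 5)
Your proposal is correct and follows essentially the route the paper intends: the paper offers no written argument beyond declaring it a ``straightforward computation,'' and your reduction of the Leibniz rule for $(D^{\nabla^{\mathcal{P}}},D^{\nabla})$ to the pointwise derivation property of Definition \ref{DefLDR} via formula (\ref{CEDM}) --- evaluating on $0$-forms for one implication, and splitting the anchor and bracket terms over the unshuffle product for the other --- is the natural way to carry that computation out. The only minor caveat is that for $0$-forms (\ref{CEDM}) actually gives $D^{\nabla^{\mathcal{P}}}(p)(\xi_{1},\ldots,\xi_{k-1})=(-)^{\bar{p}(\bar{\xi}_{1}+\cdots+\bar{\xi}_{k-1})}\nabla^{\mathcal{P}}(\xi_{1},\ldots,\xi_{k-1}|p)$, a Koszul sign you drop but which appears consistently on both sides of the Leibniz identity and so does not affect the conclusion.
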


The following proposition generalizes the main result of \cite{k-sm02}.

\begin{proposition}
Let $\nabla$ be an action. Left derivative pre-representations of $(A,L)$ on
$\mathcal{P}$ subordinate to $\nabla$ are equivalent to left $(\mathcal{A}%
,\mathcal{A}\otimes_{A}L)$-connections in $\mathcal{P}$.
\end{proposition}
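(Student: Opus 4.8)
The plan is to translate both sides of the claimed equivalence into Chevalley-Eilenberg language and match them through the canonical isomorphism (\ref{7}). Since $\nabla$ is an action, Corollary \ref{Cor1} equips $(\mathcal{A},\mathcal{A}\otimes_{A}L)$ with the structure of a transformation $LR_{\infty}[1]$ algebra, with some structure multiderivation $\mathbb{Y}$ restricting to $\mathbb{X}$ on $(A,L)$, and (\ref{7}) provides a DG-algebra isomorphism $\mathbf{CE}(\mathcal{A},\mathcal{A}\otimes_{A}L)=(\mathrm{Sym}_{\mathcal{A}}(\mathcal{A}\otimes_{A}L,\mathcal{A}),\eta(\mathbb{Y}))\simeq(\mathrm{Sym}_{A}(L,\mathcal{A}),D^{\nabla})$. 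By the same adjunction (extension of scalars), this isomorphism extends to a module isomorphism $\mathrm{Sym}_{\mathcal{A}}(\mathcal{A}\otimes_{A}L,\mathcal{P})\simeq\mathrm{Sym}_{A}(L,\mathcal{P})$ over it.

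First I would describe left $(\mathcal{A},\mathcal{A}\otimes_{A}L)$-connections in $\mathcal{P}$ in Chevalley-Eilenberg terms. Every such connection is subordinate to the fixed multiderivation $\mathbb{Y}$, so by Remark \ref{Rem1} the morphism $\eta^{L}$ of Proposition \ref{Prop8} (applied to the transformation algebra) restricts to a \emph{bijection} between them and the $\mathrm{Sym}_{\mathcal{A}}(\mathcal{A}\otimes_{A}L,\mathcal{A})$-module derivations of $\mathrm{Sym}_{\mathcal{A}}(\mathcal{A}\otimes_{A}L,\mathcal{P})$ whose symbol equals $\eta(\mathbb{Y})$. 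Note that this invokes only the fixed-first-component bijectivity of Remark \ref{Rem1}, so no projectivity hypothesis on $L$ is needed. Transporting along the two isomorphisms above turns these into $\mathrm{Sym}_{A}(L,\mathcal{A})$-module derivations of $\mathrm{Sym}_{A}(L,\mathcal{P})$ with symbol $D^{\nabla}$; that is, into pairs $(D^{\nabla^{\mathcal{P}}},D^{\nabla})$ of the shape occurring in Proposition \ref{Prop4}.

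Finally, I would invoke Proposition \ref{Prop4}, which states precisely that such pairs $(D^{\nabla^{\mathcal{P}}},D^{\nabla})$ are in bijection with those left $(A,L)$-connections $\nabla^{\mathcal{P}}$ in $\mathcal{P}$ that are left derivative pre-representations subordinate to $\nabla$ (the passage $\nabla^{\mathcal{P}}\leftrightarrow D^{\nabla^{\mathcal{P}}}$ being the $\eta^{L}$ bijection of Remark \ref{Rem1} for the fixed component $\mathbb{X}$, and the $\mathcal{A}$-module derivation property of Definition \ref{DefLDR} being exactly what upgrades a $\mathrm{Sym}_{A}(L,A)$-module derivation to a $\mathrm{Sym}_{A}(L,\mathcal{A})$-module derivation). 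Composing the three bijections yields the asserted equivalence.

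The main obstacle will be bookkeeping rather than conceptual: one must check that the module isomorphism $\mathrm{Sym}_{\mathcal{A}}(\mathcal{A}\otimes_{A}L,\mathcal{P})\simeq\mathrm{Sym}_{A}(L,\mathcal{P})$ intertwines the $\mathrm{Sym}_{\mathcal{A}}(\mathcal{A}\otimes_{A}L,\mathcal{A})$-module structure with the $\mathrm{Sym}_{A}(L,\mathcal{A})$-module structure compatibly with (\ref{7}), so that the property of being a module derivation is preserved and the symbol $\eta(\mathbb{Y})$ is carried exactly to $D^{\nabla}$. Once this identification of structures is in place, the two derivation conditions — the defining property of a left $(\mathcal{A},\mathcal{A}\otimes_{A}L)$-connection subordinate to $\mathbb{Y}$ on one side, and the $\mathcal{A}$-module derivation property repackaged by Proposition \ref{Prop4} on the other — coincide termwise, and the equivalence follows.
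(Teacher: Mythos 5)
Your argument is correct in substance, but it takes a genuinely different and much heavier route than the paper. The paper's proof works directly at the level of the structure maps: given a left derivative pre-representation, one extends the maps $L^{\times(k-1)}\times\mathcal{P}\longrightarrow\mathcal{P}$ by $\mathcal{A}$-linearity in the first $k-1$ slots (well-definedness and the symbol condition (\ref{29}) for the extended connection follow from Definition \ref{DefLDR} together with the construction of the transformation multiderivation in Proposition \ref{Prop1}); conversely one restricts a left $(\mathcal{A},\mathcal{A}\otimes_{A}L)$-connection to $L\times\cdots\times L\times\mathcal{P}$. You instead dualize everything into the Chevalley--Eilenberg picture, using the isomorphism (\ref{7}), Remark \ref{Rem1} applied to the transformation algebra, and Proposition \ref{Prop4}. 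Your route buys something: it simultaneously proves the identification of the two DG modules $(\mathrm{Sym}_{A}(L,\mathcal{P}),D^{\nabla^{\mathcal{P}}})$ and $(\mathrm{Sym}_{\mathcal{A}}(\mathcal{A}\otimes_{A}L,\mathcal{P}),D^{\bar{\nabla}^{\mathcal{P}}})$, which the paper states separately right after the proposition. What it costs is a dependence on the surjectivity half of Remark \ref{Rem1} (a derivation of $\mathrm{Sym}_{A}(L,\mathcal{P})$ with prescribed symbol is determined by, and reconstructible from, its values on $0$-forms); this is delicate when $\mathrm{Sym}_{A}(L,\mathcal{P})$ is not generated over $\mathrm{Sym}_{A}(L,A)$ by $\mathcal{P}$, so your parenthetical claim that "no projectivity hypothesis on $L$ is needed" is only as robust as Remark \ref{Rem1} itself, whereas the paper's direct extension/restriction argument avoids this issue entirely. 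If you keep your route, you should also spell out the bookkeeping you defer to the end, namely that the adjunction isomorphism intertwines the module structures and carries $\eta(\mathbb{Y})$ to $D^{\nabla}$; that verification is essentially the same computation as the paper's one-line proof.
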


\begin{proof}
Let $\mathcal{P}$ possess a left derivative pre-representation of $(A,L)$
subordinate to $\nabla$. Extending the structure maps by $\mathcal{A}%
$-linearity, one gets a left $(\mathcal{A},\mathcal{A}\otimes_{A}%
L)$-connection. Conversely, let $\mathcal{P}$ possess a left $(\mathcal{A}%
,\mathcal{A}\otimes_{A}L)$-connection. Restricting the structure maps
\[
(\mathcal{A}\otimes_{A}L)\times\cdots\times(\mathcal{A}\otimes_{A}%
L)\times\mathcal{P}\longrightarrow\mathcal{P}%
\]
to $L\times\cdots\times L\times\mathcal{P}$ one gets a left derivative pre-representation.
\end{proof}

Let $\nabla$ be an action and $\nabla^{\mathcal{P}}$ be a left derivative
representation of $(A,L)$. Then $\mathcal{P}$ possesses both a flat left
$(A,L)$-connection $\nabla^{\mathcal{P}}$ and a flat left $(\mathcal{A}%
,\mathcal{A}\otimes_{A}L)$-connection $\bar{\nabla}^{\mathcal{P}}$.
Accordingly, in view of Proposition \ref{Prop4}, two (a-priori different)
Chevalley-Eilenberg DG modules can be associated to it. Namely, $(\mathrm{Sym}%
_{A}(L,\mathcal{P}),D^{\nabla^{\mathcal{P}}})$ and $(\mathrm{Sym}%
_{\mathcal{A}}(\mathcal{A}\otimes_{A}L,\mathcal{P}),D^{\bar{\nabla
}^{\mathcal{P}}})$. It is easy two see that the two do actually identify under
the canonical isomorphism
\[
\mathrm{Sym}_{\mathcal{A}}(\mathcal{A}\otimes_{A}L,\mathcal{P})\simeq
\mathrm{Sym}_{A}(L,\mathcal{P}).
\]

Now, let $\mathcal{Q}$ be another graded $\mathcal{A}$-module.

\begin{definition}
\label{DefRDR}A \emph{right derivative pre-representation of }$(A,L)$ \emph{on
}$\mathcal{Q}$ \emph{subordinate to }$\nabla$ is a right $(A,L)$-connection
$\Delta^{\mathcal{Q}}$ in $\mathcal{Q}$ such that, for all $\xi_{1},\ldots
,\xi_{k-1}\in L$, and $k\in\mathbb{N}$, the pair
\[
(\Delta^{\mathcal{Q}}(\xi_{1},\ldots,\xi_{k-1}{}{}),-\nabla(\xi_{1},\ldots
,\xi_{k-1}))
\]
is an $\mathcal{A}$-module derivation of $\mathcal{Q}$, i.e.,
\[
\Delta^{\mathcal{Q}}(\xi_{1},\ldots,\xi_{k-1}|{}fp)=(-)^{\chi}f\Delta
^{\mathcal{P}}(\xi_{1},\ldots,\xi_{k-1}|{}p)-\nabla(\xi_{1},\ldots,\xi
_{k-1}|{}f{})\,p\,,
\]
for all $f\in\mathcal{A}$, and $q\in\mathcal{Q}$. If $\nabla$ is an action,
then $\Delta^{\mathcal{Q}}$ is a \emph{right derivative representation} if it
is flat.
\end{definition}

With a straightforward computation, one can check the following

\begin{proposition}
\label{Prop5}Let $\Delta^{\mathcal{Q}}$ be a right $(A,L)$-connection in
$\mathcal{Q}$. Then $\Delta^{\mathcal{Q}}$ is a right derivative
pre-representation subordinate to $\nabla$ iff $(D^{\Delta^{\mathcal{Q}}%
},D^{\nabla})$ is a $\mathrm{Sym}_{A}(L,\mathcal{A})$-module derivation of
$\mathcal{Q}\otimes_{A}S_{A}^{\bullet}L$.
\end{proposition}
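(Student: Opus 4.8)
The plan is to follow the proof of the left analogue, Proposition \ref{Prop4}, transporting it from forms to tensors and inserting the extra minus signs that distinguish right connections. The two translations needed are already available. By Proposition \ref{Prop9} and Remark \ref{Rem1}, giving a right $(A,L)$-connection $\Delta^{\mathcal{Q}}$ in $\mathcal{Q}$ is the same as giving its Rinehart derivation $D^{\Delta^{\mathcal{Q}}}$, a $\mathrm{Sym}_A(L,A)$-module derivation of $\mathcal{Q}\otimes_A S_A^{\bullet}L$ whose symbol is $\eta(\mathbb{X})=D$; and by Proposition \ref{Prop1} (together with the fact that $\nabla$ is a pre-action, hence takes values in algebra derivations of $\mathcal{A}$) giving $\nabla$ is the same as giving the degree $1$ derivation $D^{\nabla}$ of the algebra $\mathrm{Sym}_A(L,\mathcal{A})$ extending $D$. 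Thus both sides of the claimed equivalence become assertions about the single operator $D^{\Delta^{\mathcal{Q}}}$.

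Next I would isolate the genuine content. The algebra $\mathrm{Sym}_A(L,\mathcal{A})$ is built from its subalgebra $\mathrm{Sym}_A(L,A)$ of $A$-valued forms and its degree-zero part $\mathcal{A}$ of $0$-forms, and these act on $\mathcal{Q}\otimes_A S_A^{\bullet}L$ by contraction and by multiplication on the $\mathcal{Q}$-coefficient, respectively. Since $D^{\Delta^{\mathcal{Q}}}$ is, by its very construction in Proposition \ref{Prop9}, already a $\mathrm{Sym}_A(L,A)$-module derivation, and since $D^{\nabla}$ is a genuine derivation of $\mathrm{Sym}_A(L,\mathcal{A})$ (so that the graded Leibniz defect is additive and multiplicative in its form-argument), the pair $(D^{\Delta^{\mathcal{Q}}},D^{\nabla})$ is a $\mathrm{Sym}_A(L,\mathcal{A})$-module derivation if and only if the Leibniz rule holds for multiplication by the remaining generators, the $0$-forms $f\in\mathcal{A}$:
$$D^{\Delta^{\mathcal{Q}}}(f\cdot U)=(-)^{f}\,f\cdot D^{\Delta^{\mathcal{Q}}}(U)+(D^{\nabla}f)\cdot U,\qquad f\in\mathcal{A},\ U\in\mathcal{Q}\otimes_A S_A^{\bullet}L.$$
When $L$ is projective and finitely generated this reduction to $0$-form generators is immediate; in general it is simply the statement that the componentwise identity below is the only obstruction.

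I would then evaluate this identity on a generator $U=\xi_{1}\cdots\xi_{\ell}\otimes q$ by means of the explicit Rinehart formula (\ref{Rine}). The first, bracket, sum in (\ref{Rine}) does not touch the coefficient $q$, so replacing $q$ by $fq$ it carries $f$ along untouched and reconstitutes the term $(-)^{f}f\cdot D^{\Delta^{\mathcal{Q}}}(U)$. The second sum produces the contractions $\Delta^{\mathcal{Q}}(\,\cdots\,|fq)$, whereas $(D^{\nabla}f)\cdot U$ produces the terms $\nabla(\,\cdots\,|f)\,q$, using that on a $0$-form the derivation has components $(D_k^{\nabla}f)(\xi_{1},\ldots,\xi_{k-1})=\nabla(\xi_{1},\ldots,\xi_{k-1}|f)$. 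Comparing coefficients slot by slot, the displayed identity holds for all $f$ and all $U$ if and only if, for every $k$,
$$\Delta^{\mathcal{Q}}(\xi_{1},\ldots,\xi_{k-1}|fq)=(-)^{\chi}f\,\Delta^{\mathcal{Q}}(\xi_{1},\ldots,\xi_{k-1}|q)-\nabla(\xi_{1},\ldots,\xi_{k-1}|f)\,q,$$
with $\chi$ as in Definition \ref{DefRDR}; this is precisely the condition that $\Delta^{\mathcal{Q}}$ be a right derivative pre-representation subordinate to $\nabla$. Both implications then follow at once.

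The only delicate point is sign bookkeeping, and that is where I would be most careful. The minus sign in front of $\nabla(\cdots|f)q$ must be traced to the minus sign of the second sum in (\ref{Rine}) -- equivalently to the symbol $-\sigma_{\mathbb{X}}$ of a right connection recorded in (\ref{18}) -- while the Koszul signs incurred in moving $f$ across the tensor part in the $\mathrm{Sym}_A(L,\mathcal{A})$-module structure must be reconciled with the exponent $\chi$ of Definition \ref{DefRDR}. These signs assemble in exactly the same pattern as in the proof of Proposition \ref{Prop4}, the sole systematic difference being the change dictated by passing from $\mathrm{Der}^{L}$ to $\mathrm{Der}^{R}$; no new phenomenon arises.
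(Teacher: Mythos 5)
The paper offers no proof of this statement beyond the phrase \textquotedblleft with a straightforward computation\textquotedblright, so there is no argument of record to compare routes with; your plan of translating Proposition \ref{Prop4} from forms to tensors is certainly the intended one, and your identification of the componentwise identity
$\Delta^{\mathcal{Q}}(\xi_{1},\ldots,\xi_{k-1}|fq)=(-)^{\chi}f\,\Delta^{\mathcal{Q}}(\xi_{1},\ldots,\xi_{k-1}|q)-\nabla(\xi_{1},\ldots,\xi_{k-1}|f)\,q$
as the whole content of the Leibniz rule for $0$-forms is correct. The implication \textquotedblleft module derivation $\Rightarrow$ derivative pre-representation\textquotedblright\ is therefore complete: one specializes the Leibniz rule to $\omega=f\in\mathcal{A}$ and reads off the identity slot by slot exactly as you describe.

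The gap is in the converse. Your reduction rests on the claim that it suffices to verify the Leibniz rule on the \textquotedblleft generators\textquotedblright\ $\mathrm{Sym}_{A}(L,A)$ and $\mathcal{A}$. The multiplicativity of the Leibniz defect that you invoke is correct (the set of $\omega$ for which the rule holds is a subalgebra, since $D^{\nabla}$ is an algebra derivation and $i_{\omega\omega'}=i_{\omega}i_{\omega'}$), but it only yields the rule on the subalgebra generated by $A$-valued forms and $0$-forms, i.e.\ on the image of $\mathcal{A}\otimes_{A}\mathrm{Sym}_{A}(L,A)\longrightarrow\mathrm{Sym}_{A}(L,\mathcal{A})$. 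That map is surjective when $L$ is projective and finitely generated, but the proposition carries no such hypothesis, and for general $L$ an $\mathcal{A}$-valued form need not decompose in this way. Your sentence \textquotedblleft in general it is simply the statement that the componentwise identity below is the only obstruction\textquotedblright\ asserts rather than proves exactly the point at issue. The repair is to run the same slot-by-slot computation directly for an arbitrary $\mathcal{A}$-valued $m$-form $\omega$: expanding $D^{\Delta^{\mathcal{Q}}}(i_{\omega}U)$ via (\ref{Rine}), the terms $\Delta^{\mathcal{Q}}(\cdots|\omega(\cdots)q)$ are rewritten with the componentwise identity (taking $f=\omega(\cdots)$), the resulting $\nabla(\cdots|\omega(\cdots))q$ terms combine with the terms $\omega(X(\cdots),\ldots)$ produced by $i_{\omega}D^{\Delta^{\mathcal{Q}}}U$ to reconstitute $i_{D^{\nabla}\omega}U$ through the higher Chevalley--Eilenberg formula for $D^{\nabla}\omega$, and the remainder is $(-)^{\omega}i_{\omega}D^{\Delta^{\mathcal{Q}}}U$. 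This is no harder than the computation you perform for $f\in\mathcal{A}$, but it is the step that actually closes the general case. (A minor point: from (\ref{CEDM}) with $\ell=0$ one has $(D_{k}^{\nabla}f)(\xi_{1},\ldots,\xi_{k-1})=(-)^{\bar{f}(\bar{\xi}_{1}+\cdots+\bar{\xi}_{k-1})}\nabla(\xi_{1},\ldots,\xi_{k-1}|f)$; the Koszul sign you drop there must be restored in the final bookkeeping.)
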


\begin{proposition}
Let $\nabla$ be an action. Right derivative pre-representations of $(A,L)$ on
$\mathcal{Q}$ subordinate to $\nabla$ are equivalent to right $(\mathcal{A}%
,\mathcal{A}\otimes_{A}L)$-connections in $\mathcal{Q}$.
\end{proposition}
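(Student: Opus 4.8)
The plan is to follow the proof of the left analogue (the preceding proposition on left derivative pre-representations), systematically replacing the module structure $\mathrm{Der}^{L}$ by $\mathrm{Der}^{R}$ and tracking the minus sign in the right derivation property of Definition \ref{DefRDR}. Since $\nabla$ is an action, Corollary \ref{Cor1} equips $(\mathcal{A},\mathcal{A}\otimes_{A}L)$ with an $LR_{\infty}[1]$ algebra structure; denote its structure multiderivation by $\mathbb{Y}$. From the construction in the proof of Proposition \ref{Prop1}, the symbol $\sigma_{\mathbb{Y}}$ restricts on $L$-arguments to $\nabla$, regarded as taking values in $\mathrm{Der}\,\mathcal{A}$. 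By Definition \ref{DefRConn}, a right $(\mathcal{A},\mathcal{A}\otimes_{A}L)$-connection $\bar{\Delta}^{\mathcal{Q}}$ is a degree $1$, $\mathrm{Der}^{R}\mathcal{Q}$-valued form with $(\mathbb{Y},\bar{\Delta}^{\mathcal{Q}})\in\widehat{\mathcal{R}}(\mathcal{Q})$, so by (\ref{18}) its symbol equals $-\sigma_{\mathbb{Y}}$.

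For the forward direction, let $\Delta^{\mathcal{Q}}$ be a right derivative pre-representation subordinate to $\nabla$. The subordination condition of Definition \ref{DefRDR} says precisely that each operator $\Delta^{\mathcal{Q}}(\xi_{1},\ldots,\xi_{k-1})$ is an $\mathcal{A}$-module derivation of $\mathcal{Q}$ with symbol $-\nabla(\xi_{1},\ldots,\xi_{k-1})$, hence an element of $\mathrm{Der}_{\mathcal{A}}^{R}\mathcal{Q}$. I then extend each structure map from $L^{\times(k-1)}$ to $(\mathcal{A}\otimes_{A}L)^{\times(k-1)}$ by $\mathcal{A}$-multilinearity with respect to the $\mathrm{Der}^{R}$ module structure, setting $\bar{\Delta}^{\mathcal{Q}}(f\otimes\xi,\ldots)$ equal to the right action of $f$ on $\Delta^{\mathcal{Q}}(\xi,\ldots)$. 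The resulting form has symbol $-\sigma_{\mathbb{Y}}$ and degree $1$, and agrees with $\Delta^{\mathcal{Q}}$ on $(A,L)$, so it is a right $(\mathcal{A},\mathcal{A}\otimes_{A}L)$-connection.

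Conversely, given a right $(\mathcal{A},\mathcal{A}\otimes_{A}L)$-connection $\bar{\Delta}^{\mathcal{Q}}$, I restrict each structure map along the inclusion $\xi\longmapsto 1\otimes\xi$, obtaining a degree $1$, $A$-multilinear, $\mathrm{Der}^{R}$-valued form on $L^{\times(k-1)}\times\mathcal{Q}$, that is, a right $(A,L)$-connection; because the symbol of $\bar{\Delta}^{\mathcal{Q}}$ on $L$-arguments is $-\sigma_{\mathbb{Y}}|_{L}=-\nabla$, this restriction automatically satisfies the derivative condition of Definition \ref{DefRDR}, hence is subordinate to $\nabla$. The two assignments are visibly inverse to each other. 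The one point requiring care---and the main, though routine, obstacle---is to verify that the $\mathcal{A}$-multilinear extension in the forward direction descends to the relative tensor product $\mathcal{A}\otimes_{A}L$ with the correct Koszul signs; as in the left case this reduces to the compatibility identity (\ref{17}) satisfied by the right $(A,L)$-connection $\Delta^{\mathcal{Q}}$.
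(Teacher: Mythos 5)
Your proposal is correct and follows essentially the same route as the paper's own proof: extend the structure maps by $\mathcal{A}$-(multi)linearity (with respect to the right module structure on $\mathrm{Der}^{R}\mathcal{Q}$) in one direction, and restrict along $\xi\longmapsto 1\otimes\xi$ in the other. The extra bookkeeping you supply on symbols and the descent to $\mathcal{A}\otimes_{A}L$ merely makes explicit what the paper leaves implicit.
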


\begin{proof}
Let $\mathcal{Q}$ possess a right derivative pre-representation of $(A,L)$
subordinate to $\nabla$. Extending the structure maps by $\mathcal{A}%
$-linearity, one gets a right $(\mathcal{A},\mathcal{A}\otimes_{A}%
L)$-connection. Conversely, let $\mathcal{Q}$ possess a right $(\mathcal{A}%
,\mathcal{A}\otimes_{A}L)$-connection. Restricting the structure maps
\[
(\mathcal{A}\otimes_{A}L)\times\cdots\times(\mathcal{A}\otimes_{A}%
L)\times\mathcal{Q}\longrightarrow\mathcal{Q}%
\]
to $L\times\cdots\times L\times\mathcal{Q}$ one gets a right derivative pre-representation.
\end{proof}

Finally, Let $\nabla$ be an action and $\Delta^{\mathcal{Q}}$ be a right
derivative representation of $(A,L)$. Then $\mathcal{Q}$ possesses both a flat
right $(A,L)$-connection $\Delta^{\mathcal{Q}}$ and a flat right
$(\mathcal{A},\mathcal{A}\otimes_{A}L)$-connection $\bar{\Delta}^{\mathcal{Q}%
}$. Accordingly, in view of Proposition \ref{Prop5}, two (a-priori different)
DG modules can be associated to it along the lines of Section
\ref{SecRightConn}. Namely, $(S_{A}^{\bullet}L\otimes_{A}\mathcal{Q}%
,D^{\Delta^{\mathcal{Q}}})$ and $(S_{\mathcal{A}}^{\bullet}(\mathcal{A}%
\otimes_{A}L)\otimes_{\mathcal{A}}\mathcal{Q},D^{\bar{\Delta}^{\mathcal{Q}}}%
)$. It is easy two see that the two do actually identify under the canonical
isomorphism
\[
S_{\mathcal{A}}^{\bullet}(\mathcal{A}\otimes_{A}L)\otimes_{\mathcal{A}%
}\mathcal{Q}\simeq S_{A}^{\bullet}L\otimes_{A}\mathcal{Q}.
\]

\subsection*{Acknowledgments}

{I'm much indebted to Jim Stasheff for carefully reading all preliminary
versions of this paper. This presentation has been greatly influenced by his
numerous comments and suggestions. I also thank Marco Zambon for his comments
on a preliminary draft of the manuscript. They suggested to me to include in
the paper Appendix \ref{Tables}.}

\appendix{}

\section{Graded Symmetric Forms and Tensors\label{ApForm}}

\label{SecMultAlg}

Let $A$ be a graded, associative, graded commutative, unital algebra over a
field $K$ of zero characteristic, and $L$ a graded $A$-module.

Consider the space $\mathrm{Sym}_{A}^{k}(L,A)$
of $A$-multilinear, graded symmetric maps $L^{\times k}\longrightarrow A$.
Elements in $\mathrm{Sym}_{A}^{k}(L,A)$ will be called \emph{symmetric }%
$k$\emph{-forms }(on $L$), or simply $k$\emph{-forms}. The direct sum
$\mathrm{Sym}_{A}(L,A):=\bigoplus_{k\geq0}\mathrm{Sym}_{A}^{k}(L,A)$ is a
bi-graded algebra, and an associative, graded commutative, unital algebra with
product given by
\begin{equation}
\omega\omega^{\prime}(\xi_{1},\ldots,\xi_{k+k^{\prime}})=\sum_{\sigma\in
S_{k,k^{\prime}}}\alpha(\sigma,\boldsymbol{\xi})(-)^{\chi}\omega(\xi
_{\sigma(1)},\ldots,\xi_{\sigma(k)})\omega^{\prime}(\xi_{\sigma(k+1)}%
,\ldots,\xi_{\sigma(k+k^{\prime})}), \label{1}%
\end{equation}
$\chi=\bar{\omega}^{\prime}(\bar{\xi}_{\sigma(1)}+\cdots+\bar{\xi}_{\sigma
(k)})$, $\omega$ a $k$-form, $\omega^{\prime}$ a $k^{\prime}$-form, and
$\xi_{1},\ldots,\xi_{k+k^{\prime}}\in L$.

Consider also the $k$-th symmetric power $S_{A}^{k}L$ of $L$. Elements in
$S_{A}^{k}L$ will be called \emph{symmetric }$k$\emph{-tensors }(on $L$), or
simply $k$\emph{-tensors}. The symmetric algebra $S_{A}^{\bullet}%
L=\bigoplus_{k\geq0}S_{A}^{k}L$ is a bi-graded algebra, and an associative,
graded commutative, unital algebra with product given by the graded symmetric
(tensor) product.

Let $P$ be another $A$-module. Consider the space $\mathrm{Sym}_{A}^{k}(L,P)$
of graded, graded $A$-multilinear, graded symmetric maps $L^{\times
k}\longrightarrow P$. Elements in $\mathrm{Sym}_{A}^{k}(L,P)$ will be called
\emph{symmetric }$P$\emph{-valued }$k$\emph{-forms}, or simply $P$%
\emph{-valued }$k$\emph{-forms}. The direct sum $\mathrm{Sym}_{A}%
(L,P):=\bigoplus_{k\geq0}\mathrm{Sym}_{A}^{k}(L,P)$ is a bi-graded $A$-module,
and a left $\mathrm{Sym}_{A}(L,A)$-module with structure map written
$(\omega,\Omega)\longmapsto\mu_{\omega}\Omega$, and given by the same formula
as (\ref{1}). The space $\mathrm{Sym}_{A}(L,P)$ is also a $S_{A}^{\bullet}%
L$-module with structure map written $(u,\Omega)\longmapsto i_{u}\Omega$, and
given by
\begin{equation}
(i_{u}\Omega)(\xi_{1},\ldots,\xi_{\ell-k}):=(-)^{u\Omega}\Omega(\zeta
_{1},\ldots,\zeta_{k},\xi_{1},\ldots,\xi_{\ell-k}), \label{10}%
\end{equation}
$u=\zeta_{1}\cdots\zeta_{k}$ a $k$-tensor, $\zeta_{1},\ldots,\zeta_{k}\in L$,
and $\Omega$ an $P$-valued $k$-form.

Now, let $Q$ be a third $A$-module. The tensor product $Q\otimes_{A}%
S_{A}^{\bullet}L$ is a $\mathrm{Sym}_{A}(L,A)$-module with structure map
$(\omega,U)\longmapsto i_{\omega}U$ given by
\begin{equation}
i_{\omega}U:=\sum_{\sigma\in S_{k,\ell-k}}\alpha(\sigma;\boldsymbol{\xi
})(-)^{\chi^{\prime}}\xi_{\sigma(1)}\cdots\xi_{\sigma(k)}\otimes\omega
(\xi_{\sigma(k+1)},\ldots,\xi_{\sigma(\ell)})q, \label{11}%
\end{equation}
$\chi^{\prime}=\bar{\omega}(\bar{\xi}_{\sigma(1)}+\cdots+\bar{\xi}_{\sigma
(k)})$, $\omega$ a $k$-form, $U=q\otimes\xi_{1}\cdots\xi_{\ell}\in
Q\otimes_{A}S_{A}^{\ell}L$, $q\in Q$, $\xi_{1},\ldots,\xi_{\ell}\in L$. The
space $S_{A}^{\bullet}L\otimes_{A}Q$ is also a $S_{A}^{\bullet}L$-module with
obvious structure map written $(u,U)\longmapsto\mu_{u}U$.

\begin{remark}
Let $u$ be a $k$-tensor, and let $\omega$ be a $k$-form. If we understand
$\mathrm{Sym}_{A}(L,P)$ as a module over $\mathrm{Sym}_{A}(L,A)$ (resp.,
$S_{A}^{\bullet}L$), then $i_{u}$ (resp., $\mu_{\omega}$) is a differential
operator of order $k$. Similarly, if we understand $Q\otimes_{A}S_{A}%
^{\bullet}L$ as a module over $\mathrm{Sym}_{A}(L,A)$ (resp., $S_{A}^{\bullet
}L$), then $\mu_{u}$ (resp., $i_{\omega}$) is a differential operator of order
$k$.
\end{remark}

{Now, let $\mathcal{A}$ be an associative, graded commutative, unital
$A$-algebra, and let $\mathcal{P}$ and $\mathcal{Q}$ be $\mathcal{A}$-modules.
In particular, they are }$A$-modules. {Clearly, $S_{A}^{\bullet}L\otimes
_{A}\mathcal{A}$ is a $\mathrm{Sym}_{A}(L,A)$-algebra. Moreover, Formula
(\ref{1}) defines a $\mathrm{Sym}_{A}(L,A)$-algebra structure on
$\mathrm{Sym}_{A}(L,\mathcal{A})$. A similar formula defines a $\mathrm{Sym}%
_{A}(L,\mathcal{A})$-module structure on $\mathrm{Sym}_{A}(L,\mathcal{P})$.
Finally, Formula (\ref{11}) defines a $\mathrm{Sym}_{A}(L,\mathcal{A})$-module
structure on ${S_{A}^{\bullet}}${$L$}$\otimes_{A}\mathcal{Q}$.}

\section{Operations with Right Connections\label{ApOpRight}}

Right $(A,L)$-connections can be operated with left $(A,L)$-connections as
follows. Let $(P,\nabla)$, and $(P^{\prime},\nabla^{\prime})$ be $A$-modules
with left $(A,L)$-connections, and $(Q,\Delta)$, and $(Q^{\prime}%
,\Delta^{\prime})$ be right $A$-modules with right $(A,L)$-connections. It is
easy to see that formulas
\[
\Delta^{\otimes}(\xi_{1},\ldots,\xi_{k-1}|q\otimes q^{\prime}):=-\Delta
(\xi_{1},\ldots,\xi_{k-1}|q)\otimes q^{\prime}-(-)^{\chi+q}q\otimes
\Delta^{\prime}(\xi_{1},\ldots,\xi_{k-1}|q^{\prime}),
\]
where $\chi=(\bar{\xi}_{1}+\cdots+\bar{\xi}_{k-1})\bar{q}$, define a left
$(A,L)$-connection $\Delta^{\otimes}$ in $Q\otimes_{A}Q^{\prime}$ (beware,
\emph{left} not right). A straightforward computation shows that the curvature
of $\Delta^{\otimes}$ is given by formulas
\[
J(\Delta^{\otimes})(\xi_{1},\ldots,\xi_{k-1}|q\otimes q^{\prime}%
)=-J(\Delta)(\xi_{1},\ldots,\xi_{k-1}|q)\otimes q^{\prime}-(-)^{\chi}q\otimes
J(\Delta^{\prime})(\xi_{1},\ldots,\xi_{k-1}|q^{\prime}).
\]
In particular, if $\Delta$ and $\Delta^{\prime}$ are flat, then $\Delta
^{\otimes}$ is flat as well.

Similarly, formulas
\[
\Delta^{\mathrm{Hom}}(\xi_{1},\ldots,\xi_{k-1}|\varphi)(q):=(-)^{\chi^{\prime
}+\varphi}\varphi(\Delta(\xi_{1},\ldots,\xi_{k-1}|q))-\Delta^{\prime}(\xi
_{1},\ldots,\xi_{k-1}|\varphi(q)),
\]
where $\chi^{\prime}=(\bar{\xi}_{1}+\cdots+\bar{\xi}_{k-1})\bar{\varphi}$,
define a left $(A,L)$-connection $\Delta^{\mathrm{Hom}}$ in $\mathrm{Hom}%
_{A}(Q,Q^{\prime})$. A straightforward computation shows that the curvature of
$\Delta^{\mathrm{Hom}}$ is given by formulas
\[
J(\Delta^{\mathrm{Hom}})(\xi_{1},\ldots,\xi_{k-1}|\varphi)(q):=(-)^{\chi
^{\prime}}\varphi(J(\Delta)(\xi_{1},\ldots,\xi_{k-1}|q))-J(\Delta^{\prime
})(\xi_{1},\ldots,\xi_{k-1}|\varphi(q)).
\]
In particular, if $\Delta$ and $\Delta^{\prime}$ are flat, then $\Delta
^{\mathrm{Hom}}$ is flat as well.

Finally, formulas%
\[
\Diamond(\xi_{1},\ldots,\xi_{k-1}|p\otimes q):=(-)^{\psi+p}p\otimes\Delta
(\xi_{1},\ldots,\xi_{k-1}|q)-\nabla(\xi_{1},\ldots,\xi_{k-1}|p)\otimes q,
\]%
\[
\Diamond^{\prime}(\xi_{1},\ldots,\xi_{k-1}|\varphi)(p):=(-)^{\psi^{\prime
}+\varphi}\varphi(\nabla(\xi_{1},\ldots,\xi_{k-1}|p))+\Delta(\xi_{1}%
,\ldots,\xi_{k-1}|\varphi(p)),
\]
and%
\[
\Diamond^{\prime\prime}(\xi_{1},\ldots,\xi_{k-1}|\varphi)(q):=-\nabla(\xi
_{1},\ldots,\xi_{k-1}|\varphi(q))-(-)^{\psi^{\prime}+\varphi}\varphi
(\Delta(\xi_{1},\ldots,\xi_{k-1}|q)),
\]
where $\psi=(\bar{\xi}_{1}+\cdots+\bar{\xi}_{k-1})\bar{p}$, and $\psi^{\prime
}=(\bar{\xi}_{1}+\cdots+\bar{\xi}_{k-1})\bar{\varphi}$, define right
$(A,L)$-connections $\Diamond$, $\Diamond^{\prime}$, and $\Diamond
^{\prime\prime}$ in $P\otimes_{A}Q$, $\mathrm{Hom}_{A}(P,Q)$, and
$\mathrm{Hom}_{A}(Q,P)$, respectively. The respective curvatures are given by
formulas
\[
J(\Diamond)(\xi_{1},\ldots,\xi_{k-1}|p\otimes q):=(-)^{\psi}p\otimes
J(\Delta)(\xi_{1},\ldots,\xi_{k-1}|q)-J(\nabla)(\xi_{1},\ldots,\xi
_{k-1}|p)\otimes q,
\]%
\[
J(\Diamond^{\prime})(\xi_{1},\ldots,\xi_{k-1}|\varphi)(p):=(-)^{\psi^{\prime}%
}\varphi(J(\nabla)(\xi_{1},\ldots,\xi_{k-1}|p))+J(\Delta)(\xi_{1},\ldots
,\xi_{k-1}|\varphi(p)),
\]
and%
\[
J(\Diamond^{\prime\prime})(\xi_{1},\ldots,\xi_{k-1}|\varphi)(q):=-J(\nabla
)(\xi_{1},\ldots,\xi_{k-1}|\varphi(q))-(-)^{\psi^{\prime}}\varphi
(J(\Delta)(\xi_{1},\ldots,\xi_{k-1}|q))
\]

The straightforward details are left to the reader.

\begin{remark}
Let $Q$ be an $A$-module with a right $(A,L)$-connection $\Delta$. There is an
induced left $(A,L)$-connection $\Delta^{\mathrm{End}}$ in $\mathrm{End}_{A}%
Q$. In its turn, $\Delta^{\mathrm{End}}$ determines a derivation
$D^{\Delta^{\mathrm{End}}}$ of $\mathrm{End}_{A}Q$-valued forms over $L$. On
the other hand, in view of Remark \ref{RemRCurv}, the curvature $J(\Delta)$ of
$\Delta$ is an $\mathrm{End}_{A}Q$-valued form (with infinitely many
components with a definite number of entries) and
\[
D^{\Delta^{\mathrm{End}}}J(\Delta)=\Delta^{\mathrm{End}}\circ J(\Delta
)-J(\Delta)\circ X=-[\Delta,J(\Delta)]-J(\nabla)\circ X=0,
\]
where I used the right Bianchi identity \ref{RBianchi}.
\end{remark}

\section{Tables of Correspondences\label{Tables}}

The constructions described in this paper generalize standard constructions on
Lie algebroids in two directions: on one side in the direction of abstract
algebra (Lie-Rinehart algebras, etc.), on another side in the direction of
higher homotopy theory ($L_{\infty}$ algebras, etc.). To make manifest the
correspondence between standard notions and notions in this paper, I record
below three tables of correspondences. I hope this will help the reader in
orienteering in the zoo of structures discussed in this paper. Notions with
the same label (a Roman number I, II, III, \ldots) do correspond to each
other. I added a bibliographic reference to less standard notions, and I left
an empty space where a notion is empty or, to my knowledge, has not been
discussed in literature. The second column of Table \ref{Table} contains the
general notions discussed in this paper.

\begin{table}[ht]
\caption{\textbf{Lie Theory}}
\begin{tabular}{ccc}
\hline
     & \textbf{Standard}                & \textbf{Higher Homotopy}     \\ \hline
I    & Lie algebra  $\mathfrak{g}$                    & $L_\infty$ algebra $V$ \cite{ls93, lm95}   \\ \hline
II   & ---                              & ---              \\ \hline
III  & left representations             & $L_\infty$ modules    \cite{lm95}          \\ \hline
IV   & right representations            & ---               \\ \hline
V    & CE differential                  & homological vector field on $V[1]$ \cite{cs11}\\ \hline
VI   & ---                              & ---                        \\ \hline
VII  & linear Poisson structure on $\mathfrak g^\ast$    & ---                     \\ \hline
VIII & actions on manifolds             & actions on graded manifolds  \cite{mz12}\\ \hline
IX   & transformation Lie algebroid     & transformation $L_\infty$ algebroid  \cite{mz12} \\ \hline
X    & linear actions on vector bundles \cite{k-s76, k-sm02}& ---                   \\ \hline
XI   & BV structure on the CE complex  \cite{l92,bl13} & $BV_\infty$ structure on $S^\bullet V[1]$   \cite{bl13,bv14}     \\ \hline
\end{tabular}
\end{table}

\begin{table}[ht]
\caption{\textbf{Geometry: Lie Algebroids}}
\begin{tabular}{cccll}
\hline
     & \textbf{Standard}                & \textbf{Higher Homotopy}     \\ \hline
I    & Lie algebroid $E$         & $L_\infty$ algebroid $E$ \cite{sss09, b11, bp12}   \\ \hline
II   & $E$-connections \cite{elw99, x99}                            & ---              \\ \hline
III  & left representations             & ---        \\ \hline
IV   & right representations \cite{p83,mp89}           & ---               \\ \hline
V    & de Rham differential             & CE differential \cite{sss09}\\ \hline
VI   & Spencer operator \cite{p83,kv98}                             & ---                        \\ \hline
VII  & f.w.~linear Poisson structure on $E^\ast$    & sh Poisson structure on $E^\ast$ \cite{b11}                    \\ \hline
VIII & actions on manifolds             & --- \\ \hline
IX   & transformation Lie algebroid \cite{hm90}    & --- \\ \hline
X    & linear actions on vector bundles \cite{k-sm02}& ---                   \\ \hline
XI   & right reps.~and BV structs.  \cite{k85,x99} & ---    \\ \hline
\end{tabular}
\end{table}

\begin{table}[ht]
\caption{\textbf{Algebra: Lie-Rinehart Algebras}}
\begin{tabular}{cccll}
\hline
     & \textbf{Standard}                & \textbf{Higher Homotopy}     \\ \hline
I    & Lie-Rinehart algebra $(A,L)$                 & $LR_\infty [1]$ algebra $(A,L)$ \cite{k01,v12,h13} (Def.~\ref{DefSHLR})  \\ \hline
II   & ---         &$(A,L)$-connections (Defs.~\ref{DefLConn}, \ref{DefRConn})         \\ \hline
III  & left modules             & left modules (Def.~\ref{DefLConn})      \\ \hline
IV   & right modules \cite{p83,mp89}           & right modules (Def.~\ref{DefRConn})              \\ \hline
V    & CE differential             & CE differential \cite{v12,h13} (Eqs.~(\ref{CED}), (\ref{CEDM}))\\ \hline
VI   & Rinehart differential \cite{r63,h98,h99}                             & Rinehart differential (Eq.~(\ref{Rine}))                    \\ \hline
VII  & Gerstenhaber structure on $\Lambda^\bullet_A L$    & $P_\infty$ algebra structure on $S_A^\bullet L$ (Sec.~\ref{SecPoi})                    \\ \hline
VIII & ---          & actions on algebra extensions (Def.~\ref{DefAct})\\ \hline
IX   & ---   & transformation $LR_\infty[1]$ algebra (Cor.~\ref{Cor1}) \\ \hline
X    & --- & derivative representations (Defs.~\ref{DefLDR}, \ref{DefRDR}) \\ \hline
XI   & right modules and BV structs. \cite{h98} & right modules and $BV_\infty$ structs. (Prop.~\ref{PropBV})    \\ \hline
\end{tabular}
\end{table}

\newpage

\quad


\begin{thebibliography}{99}                                                                                               %


\bibitem {ac11}C.~A.~Abad, and M.~Crainic, Representations up to
homotopy of Lie algebroids, \emph{J.~reine angew.~Math.~}\textbf{663}
(2012) 91--126; e-print: arXiv:0901.0319.

\bibitem {bv14}D.~Bashkirov, and A.~A.~Voronov, The BV formalism for $L_\infty$-algebras; e-print: arXiv:1410.6432.

\bibitem {bda96}K.~Bering, P.~H.~Damgaard, and J.~Alfaro, Algebra of
higher antibrackets, \emph{Nuclear Phys.~}\textbf{B478} (1996) 459--503; e-print: arXiv:hep-th/9604027.

\bibitem {bp12}{G.~Bonavolont\`{a}, and N.~Poncin, On the category of Lie
$n$-algebroids, \emph{J.~Geom.~Phys.~}\textbf{73} (2013) 70--90; e-print:
arXiv:1207.3590.}

\bibitem {bl13}C.~Braun, and A.~Lazarev, Homotopy BV algebras in Poisson
geometry, \emph{Trans.~Moscow Math.~Soc.~}\textbf{74} (2013) 217--227; e-print: arXiv:1304.6373.

\bibitem {b11}A.~J.~Bruce, From $L_{\infty}$-algebroids to higher
Schouten/Poisson structures, \emph{Rep.~Math.~Phys.~}\textbf{67 }(2011)
157--177; e-print: arXiv:1007.1389.

\bibitem {cf07}A.~Cattaneo, and G.~Felder, Relative formality theorem and
quantisation of coisotropic submanifolds, \emph{Adv.~Math.~}\textbf{208}
(2007) 521--548; e-print: arXiv:math/0501540.

\bibitem {c00}{M.~Crainic, Chern characters via connections up to homotopy;
e-print: arXiv:math/0009229.}

\bibitem {cm08}M.~Crainic, and I.~Moerdijk, Deformations of Lie brackets:
cohomological aspects, \emph{JEMS} \textbf{10} (2008) 1037--1059; e-print: arXiv:math/0403434.

\bibitem {cs11}{A.~S.~Cattaneo, and F.~Sch\"{a}tz, Introduction to
supergeometry, \emph{Rev.~Math.~Phys.~}\textbf{23} (2011) 669--690;
e-print: arXiv:1011.3401.}

\bibitem {elw99}S.~Evens, J.-H.~Lu, and A.~Weinstein, Transverse
measures, the modular class and a cohomology pairing for Lie algebroids,
\emph{Quart.~J.~Math.~Oxford Ser.~}\textbf{50} (1999) 417--436; e-print:arXiv:dg-ga/9610008.

\bibitem {gtv12}I.~Galvez-Carrillo, A.~Tonks, and B.~Vallette, Homotopy
Batalin-Vilkovisky algebras, \emph{J.~Noncommut.~Geom.~}\textbf{6}
(2012) 539--602; e-print: arXiv:0907.2246.

\bibitem {g63}M.~Gerstenhaber, The cohomology structure of an associative
ring, \emph{Ann.~Math.~}\textbf{78} (1963) 267--288.

\bibitem {hm90}P.~J.~Higgins, and K.~C.~H.~Mackenzie, Algebraic
constructions in the category of Lie algebroids, \emph{J.~Algebra}
\textbf{129} (1990) 194--230.

\bibitem {h90}J.~Huebschmann, Poisson cohomology and quantization,
\emph{J.~reine angew.~Math.~}\textbf{408} (1990) 57--113; e-print: arXiv:1303.3903.

\bibitem {h98}J.~Huebschmann, Lie-Rinehart algebras, Gerstenhaber algebras
and Batalin-Vilkovisky algebras, \emph{Ann.~Inst.~Fourier} \textbf{48}
(1998) 425--440; e-print: arXiv:dg-ga/9704005.

\bibitem {h99}J.~Huebschmann, Duality for Lie-Rinehart algebras and the
modular class, \emph{J.~reine angew.~Math.~}\textbf{510} (1999)
103--159; e-print: arXiv:dg-ga/9702008.

\bibitem {h04}J.~Huebschmann, Lie-Rinehart algebras, Descent and
quantization, in: Galois theory, Hopf algebras, and semiabelian categories
(G.~Janelidze, B.~Pareigis, and W.~Tholen, eds.) \emph{Fields Inst.~Commun.~}\textbf{43} (2004) 295--316; e-print: arXiv:math/0303016.

\bibitem {h05}J.~Huebschmann, Higher homotopies and Maurer-Cartan algebras:
Quasi-Lie-Rinehart, Gerstenhaber, and Batalin-Vilkovisky algebras, in: The
Breadth of symplectic and Poisson geometry, \emph{Progr.~in Math.~}\textbf{232} (2005) 237--302; e-print: arXiv: math/0311294.

\bibitem {h13}{J.~Huebschmann, }Multi derivation Maurer-Cartan algebras and
sh-Lie-Rinehart algebras, e-print: arXiv:1303.4665.

\bibitem {k01}L.~Kjeseth, Homotopy Rinehart cohomology of homotopy
Lie-Rinehart pairs, \emph{Homol.~Homot.~Appl.~}\textbf{3} (2001) 139--163.

\bibitem {k01b}L.~Kjeseth, A Homotopy Lie-Rinehart resolution and classical
BRST cohomology, \emph{Homol.~Homot.~Appl.~}\textbf{3} (2001) 165--192.

\bibitem {k-s76}Y.~Kosmann, On Lie transformation groups and the covariance
of differential operators, in: Differential Geometry and Relativity (M.~Cahen, and M.~Flato, eds.), \emph{Math.~Phys.~Appl.~Math.~}\textbf{3} (1976) 75--89.

\bibitem {k-sm02}Y.~Kosmann-Schwarzbach, and K.~C.~H.~Mackenzie,
Differential operators and actions of Lie algebroids, , in Quantization,
Poisson Brackets and Beyond (T.~Voronov, ed.), Contemp.~Math.~vol.~315, Amer.~Math.~Soc., Providence, RI, 2002, pp. 213--233; e-print: arXiv:math/0209337.

\bibitem {k85}J.-L.~Koszul, Crochet de Schouten-Nijenhuis et cohomologie,
\emph{Ast\'{e}risque}, hors s\'{e}rie (1985) 257--271.

\bibitem {ks07}A.~Kotov, and T.~Strobl, Characteristic classes associated
to $Q$-bundles, e-print: arXiv:0711.4106.

\bibitem {kk12}{N.~Kowalzig, and U.~Kr\"{a}hmer, Batalin-Vilkovsky
structures on $\mathrm{Ext}$ and $\mathrm{Tor}$, \emph{J.~reine angew.~Math.~}(2012) published online: DOI: 10.1515/crelle-2012-0086; e-print:
arXiv:1203.4984.}

\bibitem {k97}I.~S.~Krasil'shchik, Calculus over commutative algebras: a
concise user's guide, \emph{Acta Appl.~Math.~}\textbf{49} (1997) 235---248.

\bibitem {kv98}I.~S.~Krasil'shchik, and A.~M.~Verbovetsky, Homological
methods in equation of mathematical physics, \emph{Advanced Texts in
Mathematics}, Open Education \& Sciences, Opava, 1998, Sections 1.6--1.7;
e-print: arXiv:math/9808130.

\bibitem {k00}O.~Kravchenko, Deformations of Batalin-Vilkovisky algebras,
in: Poisson geometry 1998 (J.~Grabowski, and P.~Urbanski, eds.), vol.~51
of Banach Center Publ., Polish Acad. Sci., Warsaw, 2000, pp.~131--139;
e-print: arXiv:math/9903191.

\bibitem {lm95}T.~Lada, and M.~Markl, Strongly homotopy Lie algebras,
\emph{Comm.~Algebra }\textbf{23 }(1996) 2147--2161; e-print: arXiv:hep-th/9406095.

\bibitem {ls93}T.~Lada, and J.~Stasheff, Introduction to sh Lie algebras
for physicists, \emph{Int.~J.~Theor.~Phys.~}\textbf{32} (1993)
1087--1103; e-print: arXiv:hep-th/9209099.

\bibitem {l92}J.-L.~Loday, Cyclic Homology, Grundlehren der mathematischen
Wissenschaften Vol. 301, Springer-Verlag, Berlin, 1992, Section 10.1.

\bibitem {mp89}Yu.~I.~Manin, and I.~B.~Penkov, The formalism of left
and right connections on supermanifolds, in: Lectures on supermanifolds,
geometrical methods and conformal groups (H.-D.~Doebner, J.~D.~Hennig,
and T.~D.~Palev, eds.) World Scientific Publishing Co.~Inc., Teaneck,
NJ, 1989, pp.~3--13.

\bibitem {m06}R.~A.~Mehta, Supergroupoids, double structures, and
equivariant cohomology, PhD thesis, University of California, Berkeley, 2006;
e-print: arXiv:math.DG/0605356.

\bibitem {mz12}R.~A.~Mehta, and M.~Zambon, $L_{\infty}$-algebra actions,
\emph{Diff.~Geom.~Appl.~}\textbf{30 }(2012) 576--587; e-print: arXiv:1202.2607.

\bibitem {op05}Y.-G.~Oh, and J.-S.~Park, Deformations of coisotropic
submanifolds and strong homotopy Lie algebroids, \emph{Invent. Math.
}\textbf{161} (2005) 287--360; e-print: arXiv:math/0305292.

\bibitem {p83}I.~B.~Penkov, $\mathcal{D}$-modules on supermanifolds,
\emph{Invent.~Math.~}\textbf{71} (1983) 501--512.

\bibitem {r63}G.~Rinehart, Differential forms for general commutative
algebras, \emph{Trans.~Amer.~Math.~Soc.}{} \textbf{108} (1963) 195--222.

\bibitem {sss09}H.~Sati, U.~Schreiber, and J.~Stasheff, Twisted
differential String and Fivebrane structures, \emph{Commun.~Math.~Phys.~}\textbf{315} (2012) 169--213, Appendix A; e-print: arXiv:0910.4001.

\bibitem {s09}F.~Sch\"{a}tz, BFV-complex and higher homotopy structures,
\emph{Commun.~Math.~Phys.~}\textbf{286 }(2009) 399--443; e-print: arXiv:math/0611912.

\bibitem {s94}J.-P.~Schneiders, An introduction to $\mathcal{D}$-modules,
\emph{Bull.~Soc.~Roy.~Sci.~Li\`{e}ge} \textbf{63} (1994) 223--295.

\bibitem {sz11}{Y.~Sheng, C.~Zhu, Higher extensions of Lie algebroids and
applications to Courant algebroids, e-print: arXiv:11035920.}

\bibitem {t74}W.~Tulczyjew, Hamiltonian systems, Lagrangian systems, and the
Legendre transformation, \emph{Symposia Math.} \textbf{14} (1974) 101--114.

\bibitem {u11}K.~Uchino, Derived brackets and sh Leibniz algebras,
\emph{J.~Pure Appl.~Algebra} \textbf{215} (2011) 1102--1111; e-print: arXiv:0902.0044.

\bibitem {v12}L.~Vitagliano, On the strong homotopy Lie-Rinehart algebra of
a foliation, \emph{Commun.~Contemp.~Math.~}\textbf{16} (2014) 1450007 (49 pages); e-print: arXiv:1204.2467.

\bibitem {v12b}L.~Vitagliano, On the strong homotopy associative algebra of
a foliation, \emph{Commun.~Contemp.~Math.~}(2014) 1450026 (34 pages), in press. doi: 10.1142/S0219199714500266; e-print: arXiv:1212.1090.

\bibitem {v00}I.~Vaisman, The BV-algebra of a Jacobi manifold, \emph{Ann.~Polon.~Math.~}\textbf{73} (2000) 275--290; e-print: arXiv:math.DG/9904112.

\bibitem {v99}A.~Voronov, Homotopy Gerstenhaber Algebras, in: Conference
Moshe Flato 1999 (G.~Dito and D.~Sternheimer, eds.), vol.~2 Kluwer
Academic Publishers, the Netherlands, 2000, pp. 307--331; e-print: arXiv:math/9908040.

\bibitem {v05}T.~Voronov, Higher derived brackets and homotopy algebras,
\emph{J.~Pure Appl.~Algebra} \textbf{202} (2005) 133--153; e-print: arXiv:math/0304038.

\bibitem {x99}P.~Xu, Gerstenhaber algebras and BV-algebras in Poisson
geometry, \emph{Comm.~Math.~Phys.~}\textbf{200} (1999) 545--560;
e-print: arXiv:dg-ga/9703001.

\bibitem {y11}S.~Yu, Dolbeault dga of formal neighborhoods and $L_{\infty}%
$-algebroids, poster presented at AGNES 2011, Stony Brook, NY, 2011, url: http://www.math.psu.edu/yu/slides/poster.pdf.
\end{thebibliography}
\end{document}